\documentclass[12pt]{article}
\usepackage{latexsym,bm}
\usepackage{amsfonts,graphicx,amssymb,amsmath,graphicx,amsthm,comment}
\usepackage{mathrsfs,mathrsfs,dsfont,galois,booktabs,ctable,threeparttable,tabularx,algorithm,algorithmic}
\usepackage[english,german]{babel}
\usepackage[T1]{fontenc}
\usepackage[latin1]{inputenc}
\usepackage[colorlinks=true]{hyperref}
\usepackage{color}

\topmargin 0pt \textheight 20 true cm \textwidth 16  true cm
\oddsidemargin 0pt \evensidemargin 0pt

\newtheorem{Theorem}{Theorem}[section]
\newtheorem{Lemma}{Lemma}[section]
\newtheorem{Remark}{Remark}[section]
\newtheorem{Proposition}{Propositon}[section]
\newtheorem{Definition}{Definition}[section]
\newtheorem{Corollary}{Corollary}[section]
\numberwithin{equation}{section}
\hypersetup{linkcolor=blue,urlcolor=red,citecolor=red}

\begin{document}
\selectlanguage{english}

\title{Attainable subspaces and the bang-bang property of time optimal controls for heat equations}

\author {
G. Wang\thanks{
 School of Mathematics and Statistics, Wuhan University, Wuhan, 430072, China (wanggs62@
yeah.net) The author was partially supported by NNSF of China under
grant 11171264 and the National Basis Research Program of China
under grant 2011CB808002.} ,\;\; Y. Xu\thanks{School of Mathematical
Sciences, Fudan University, KLMNS, Shanghai, 200433, China
(yashanxu@fudan.edu.cn).}\;\; and\;\; Y. Zhang\thanks{School of
Mathematics and Statistics, Wuhan University, Wuhan, 430072, China
(yubiao\b{ }zhang@whu.edu.cn).} }

\date{}
\maketitle

\begin{abstract} In this paper, we  study two subjects on internally  controlled heat equations
with time varying potentials: the  attainable subspaces and the
bang-bang property for some time optimal control problems. We
present some equivalent characterizations on the  attainable
subspaces, and provide a sufficient conditions to ensure the
bang-bang property. Both the above-mentioned characterizations and
the sufficient condition are closely related to  some function
spaces consisting of some solutions to the adjoint  equations. It
seems for us that   the existing ways  to derive the bang-bang
property for heat equations with time-invariant potentials (see, for
instance, \cite{HOF}, \cite{GL}, \cite{MS} and \cite{Wang}) do not
work for the case where the potentials are time-varying. We provide
another way to approach it in the current paper.\\

\noindent\textbf{2010 AMS Subject Classifications. 49K20, 93C20}  \\

\noindent\textbf{Keywords.} Attainable subspaces, the bang-bang
property, time optimal controls, heat equations

\end{abstract}

\section{Introduction}
Let $\Omega\subset\mathbb R^d$, $d\geq 1$, be a bounded domain with a $C^2$ boundary $\partial \Omega$. Write $\omega\subset \Omega$ for
an open and non-empty subset with its characteristic function $\chi_\omega$. Consider the controlled heat equation:
\begin{equation}\label{originalequation}
\begin{cases}
y_t-\Delta y+a y=\chi_\omega u & \textrm{ in }\Omega\times \mathbb R^+,\\
y=0                            &\textrm{ on }\partial\Omega\times\mathbb R^+,\\
y(x,0)=y_0(x)                  &\textrm{ in }\Omega,
\end{cases}
\end{equation}
where $a\in L^\infty(\Omega\times\mathbb R^+)$, $y_0\in L^2(\Omega)$
and $u\in L^p(\mathbb R^+;L^2(\Omega))$, with $1<p\leq \infty$. We
will treat the solution
 of Equation (\ref{originalequation}) as a function from
 $\mathbb{R}^+$ to $L^2(\Omega)$, and denote it by $y(\cdot; y_0,u)$.
When  $\hat u\in L^p(0,T;L^2(\Omega))$ for some $T>0$, we use
$y(\cdot; y_0,\hat u)$ to stand for  the solution of Equation
(\ref{originalequation}), where $u=\hat u$ over $(0,T)$ and $u=0$
over $(T, \infty)$. Throughout the paper, $\|\cdot\|$ and
$\langle\cdot,\cdot\rangle$ stand for the usual norm and inner
product in $L^2(\Omega)$;
 $\|\cdot\|_\omega$ and $\langle\cdot,\cdot\rangle_\omega$ denote the usual norm and inner product in $L^2(\omega)$.
Given $T>0$ and $z\in L^2(\Omega)$, write  $\varphi(\cdot;T,z)$ for the solution to the
adjoint equation:
\begin{equation}\label{adjoint-equation}
\begin{cases}
\varphi_t+\Delta\varphi-a\varphi=0&\textrm{ in }\Omega\times(0,T),\\
\varphi=0&\textrm{ on }\partial\Omega\times(0,T),
\end{cases}
\end{equation}
with the initial condition $\varphi(T)=z$ over $\Omega$.

This paper studies two subjects on internally controlled equation
(\ref{originalequation}):  the attainable subspaces and the
bang-bang property of some time optimal control problems. These
subjects  are related to the spaces $Y_{T, q}$ (with $T>0$ and
$1\leq q<\infty$), which are defined by
\begin{equation}\label{wang1.8}
 Y_{T,q}=\overline{X_{T,q}}^{\|\cdot\|_{L^q(0,T;L^2(\omega))}},
\end{equation}
endowed with the norm:
\begin{equation}\label{WANG1.6}
\|\cdot\|_{Y_{T,q}}\triangleq\|\cdot\|_{L^q(0,T;L^2(\omega))},
\end{equation}
 where $  X_{T,q}=\big\{\chi_\omega\varphi(\cdot;T,z)|z\in L^2(\Omega)\big\}$ endowed with the $L^q(0,T;L^2(\omega))$-norm.
  We start with introducing the attainable subspaces. The attainable subspaces of (\ref{originalequation}) at time $T>0$ are defined by
\begin{equation}\label{wang1.5}
 A_{T,p} = \big\{y(T; 0, u)\,~\big|~ u\in L^p(0,T;L^2(\Omega))
 \big\},\; 1<p\leq \infty,
\end{equation}
endowed with the norms:
\begin{equation}\label{jiaxu1.6}
\|y_T\|_{A_{T,p}}\triangleq\inf \big\{
\|u\|_{L^p(0,T;L^2(\Omega))}~\big|~ y(T; 0, u)=y_T\big\},\;\;y_T\in
A_{T,p}.
\end{equation}
 We next introduce
 the following time optimal control problem $(TP)_{y_0}^{M,p}$:
\begin{equation}\label{TP}
 T_p(M,y_0)\triangleq
\inf_{u\in\mathcal{U}^{M,p}}\big\{t\;\big| \; y(t;y_0,u)=0 \big\},
\end{equation}
where $y_0\in L^2(\Omega)\backslash\{0\}$, $M>0$, $1<p\leq \infty$
and
$$
\mathcal{U}^{M,p}\triangleq \big\{v\,:\, \mathbb R^+\to
L^2(\Omega) \mid ~\|v\|_{L^p(\mathbb R^+;L^2(\Omega))}\leq M \big\}.
$$
In Problem $(TP)_{y_0}^{M,p}$, $u^*\in \mathcal{U}^{M,p}$ is called
an optimal control if $y(T_p(M,y_0); y_0, u^*)=0$; while $\hat u\in
\mathcal{U}^{M,p}$ is called an admissible control if $y(T; y_0,
\hat u)=0$ for some $T>0$.
\begin{Definition}\label{definition 1.1}
 Problem $(TP)_{y_0}^{M,p}$ has the
bang-bang property if any optimal control $u^*$ verifies that
$\|\chi_\omega u^*\|_{L^p( 0, T_p(M,y_0); L^2(\Omega))}=M$ and
$\|\chi_\omega u^*(t)\|\neq 0$ for a.e. $t\in (0,T_p(M,y_0))$, when
$1<p<\infty$; while $\|\chi_\omega u^*(t)\|=M$ for a.e. $t\in
(0,T_\infty(M,y_0))$,
 when $p=\infty$.
\end{Definition}

\begin{Remark}\label{remarkwang1.1}
We agree that when $(TP)_{y_0}^{M,p}$ has no any optimal control, it
does not hold the bang-bang property.

\end{Remark}

\noindent Our studies on   $(TP)^{M,p}_{y_0}$ are connected with the
norm optimal control problem $(NP)_{y_0}^{T,p}$:
\begin{equation}\label{NP}
N_p(T,y_0)\triangleq\inf\big\{\|u\|_{L^p(0,T;L^2(\Omega))}\,\big|\,y(T;y_0,u)=0\big\},\;\;T>0.
\end{equation}
In Problem $(NP)_{y_0}^{T,p}$, $u^*$ is called an optimal control if
$\|u^*\|_{L^p(0,T;L^2(\Omega))}=N_p(T,y_0)$ and $y(T; y_0, u^*)=0$;
while $\hat u\in L^p(0,T;L^2(\Omega))$ is called an admissible
control if $y(T; y_0, \hat u)=0$.
\begin{Definition}\label{definition1.2}
Problem $(NP)_{y_0}^{T,p}$ has the bang-bang property if any optimal
control $u^*$ satisfies that $\|\chi_\omega u^*\|_{L^p(0,T;
L^2(\Omega))}=N_p(T,y_0)$ and  $\|\chi_\omega u^*(t)\|\neq 0$ for
a.e. $t\in (0,T)$,  when $1<p<\infty$; while $\|\chi_\omega
u^*(t)\|=N_\infty(T,y_0)$ for a.e. $t\in (0,T)$, when $p=\infty$.
\end{Definition}
\noindent  We treat $N_p(\cdot,y_0)$ as a function of $T$. It is
proved
 that  the limit of $ N_p(T,y_0)$,
 as
 $T$ goes to  $\infty$, exists (see Lemma~\ref{lemma-N-t-lim}). Hence, we can let
\begin{equation}\label{HUANG1.12}
\widehat N_p(y_0)\triangleq\displaystyle\lim_{T\rightarrow \infty}
N_p(T,y_0).
\end{equation}
To ensure the bang-bang property for $(TP)^{M,p}_{y_0}$, we impose
the following condition on  the space $Y_{T,q}$ (with $q$ {\it the
conjugate exponent of $p$, i.e., $\frac{1}{p}+\frac{1}{q}=1$}):
\begin{equation}\label{Y-Z}
 Y_{T,q}=Z_{T,q}\;\;\mbox{for each}\;\; T>0,
\end{equation}
where
\begin{equation}\label{zT1}
Z_{T,q}=\big\{\chi_\omega \psi\in L^q(0,T;L^2(\omega)) \,\mid\, \psi\in C([0,T);
L^2(\Omega)) \mbox{ solves Equation}\;
(\ref{adjoint-equation})\big\}.
\end{equation}

The main results obtained in this paper are as follows.

\begin{Theorem}\label{iso}
 Let $T>0$. Let $p\in(1, \infty]$  and $q$ be the conjugate exponent of $p$.
  $(i)$  When $1< p \leq\infty$, there is a linear isomorphism  $G_p$ from $A_{T,p}$ to
 $Y^*_{T,q}$ (i.e., $G_p$ is linear, one to one and preserves the norms); $(ii)$ When $1< q<\infty$, there is an isomorphism $H_q:\, Y_{T,q}\to A_{T,p}$  defined by
\begin{equation}\label{huangwang1.15}
 H_q(\xi)=\begin{cases}
                y(T; 0, u_{\xi})    , ~& \;\mbox{if}\;\; \xi\neq 0,\\
                     0, ~& \;\mbox{if}\;\; \xi= 0,
                    \end{cases}
\end{equation}
where
\begin{equation}\label{iso-2}
 u_\xi(x,t)=\begin{cases}
                    \|\xi\|^{2-q}_{L^q(0,T;L^2(\omega))}
 \cdot \|\xi(t)\|_\omega^{q-2} \cdot \xi(x,t), ~&(x,t)\in \omega\times(0,T),\\
                     0, ~&(x,t)\in (\Omega\backslash\omega)\times(0,T).
                    \end{cases}
 \end{equation}

\end{Theorem}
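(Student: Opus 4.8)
The plan is to build everything on the transposition identity that links the control problem for (\ref{originalequation}) with the adjoint equation (\ref{adjoint-equation}). Multiplying (\ref{originalequation}) (with $y_0=0$) by $\varphi(\cdot;T,z)$, integrating over $\Omega\times(0,T)$ and using the boundary conditions, one obtains, for every $u\in L^p(0,T;L^2(\Omega))$ and $z\in L^2(\Omega)$,
\begin{equation}\label{prop-dual}
\langle y(T;0,u),z\rangle=\int_0^T\big\langle u(s),\chi_\omega\varphi(s;T,z)\big\rangle_\omega\,ds .
\end{equation}
This is the single fact from which both parts follow. For part $(i)$ I would first define $G_p$ on the dense subset $X_{T,q}\subset Y_{T,q}$ by setting $\langle G_p(y_T),\chi_\omega\varphi(\cdot;T,z)\rangle=\langle y_T,z\rangle$. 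Identity (\ref{prop-dual}) shows at once that the right-hand side depends only on the function $\chi_\omega\varphi(\cdot;T,z)$ and not on $z$ itself, so $G_p(y_T)$ is a well-defined linear functional on $X_{T,q}$; writing $y_T=y(T;0,u)$ and applying H\"older's inequality in (\ref{prop-dual}) gives $|\langle y_T,z\rangle|\le\|u\|_{L^p(0,T;L^2(\Omega))}\,\|\chi_\omega\varphi(\cdot;T,z)\|_{Y_{T,q}}$, and taking the infimum over those $u$ with $y(T;0,u)=y_T$ yields $\|G_p(y_T)\|_{Y_{T,q}^*}\le\|y_T\|_{A_{T,p}}$. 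Hence $G_p(y_T)$ extends by density to an element of $Y_{T,q}^*$, $G_p$ is linear, and it is injective because $G_p(y_T)=0$ forces $\langle y_T,z\rangle=0$ for all $z$.

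The heart of part $(i)$ — and the step I expect to be the main obstacle — is the reverse norm inequality together with surjectivity, which I would obtain simultaneously by a Hahn--Banach argument. Given any $g\in Y_{T,q}^*$, extend it (Hahn--Banach) to a functional on all of $L^q(0,T;L^2(\omega))$ without increasing its norm; since $1\le q<\infty$, this extension is represented by some $w\in L^p(0,T;L^2(\omega))$ with $\|w\|_{L^p(0,T;L^2(\omega))}=\|g\|_{Y_{T,q}^*}$. Extending $w$ by zero outside $\omega$ produces a control $\bar w\in L^p(0,T;L^2(\Omega))$, and (\ref{prop-dual}) shows that $y_T:=y(T;0,\bar w)$ satisfies $G_p(y_T)=g$; thus $G_p$ is onto. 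Applied to $g=G_p(y_T)$, this same construction exhibits a control reproducing $y_T$ whose norm equals $\|G_p(y_T)\|_{Y_{T,q}^*}$, so $\|y_T\|_{A_{T,p}}\le\|G_p(y_T)\|_{Y_{T,q}^*}$; combined with the previous bound, $G_p$ preserves norms. This disposes of $(i)$ for all $1<p\le\infty$, the case $p=\infty$, $q=1$ being covered because $(L^1)^*=L^\infty$.

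For part $(ii)$ I would first check, using the conjugacy relations $p(q-1)=q$ and $q(p-1)=p$, that the map $\xi\mapsto u_\xi$ of (\ref{iso-2}) sends $Y_{T,q}$ into $L^p(0,T;L^2(\Omega))$ with
\begin{equation}\label{plan-normmap}
\|u_\xi\|_{L^p(0,T;L^2(\Omega))}=\|\xi\|_{Y_{T,q}},\qquad \int_0^T\langle u_\xi(s),\xi(s)\rangle_\omega\,ds=\|\xi\|_{Y_{T,q}}^2 .
\end{equation}
These two identities say precisely that the restriction of $u_\xi$ to $Y_{T,q}$ is the normalized duality mapping $J_q\colon Y_{T,q}\to Y_{T,q}^*$: indeed the second identity and $\|u_\xi\|_{L^p}=\|\xi\|$ force $u_\xi|_{Y_{T,q}}$ to meet the two defining conditions of $J_q(\xi)$, which pin it down by single-valuedness. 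Because $Y_{T,q}$ is a closed subspace of the uniformly convex and uniformly smooth space $L^q(0,T;L^2(\omega))$ (here $1<q<\infty$), $J_q$ is a norm-preserving homeomorphism of $Y_{T,q}$ onto $Y_{T,q}^*$. Finally, inserting $u=u_\xi$ into (\ref{prop-dual}) and comparing with the definition of $G_p$ gives $G_p\big(H_q(\xi)\big)=J_q(\xi)$ for every $\xi$, i.e. $H_q=G_p^{-1}\circ J_q$. Since $G_p^{-1}$ is the isometric isomorphism produced in $(i)$ and $J_q$ is a norm-preserving homeomorphism, $H_q$ is a norm-preserving bijection of $Y_{T,q}$ onto $A_{T,p}$ (nonlinear when $q\neq2$), which is the assertion. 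The genuinely delicate points are justifying (\ref{prop-dual}) at the level of weak solutions with the merely $L^\infty$, time-dependent potential $a$, and confirming that $u_\xi$ is the duality mapping of the subspace $Y_{T,q}$ itself rather than of the ambient $L^q$-space.
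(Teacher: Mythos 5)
Your argument for part $(i)$ is correct and is essentially the paper's own: you define $G_p$ through the duality identity (\ref{jiaxu2.1}), get $\|G_p(y_T)\|_{Y^*_{T,q}}\le\|y_T\|_{A_{T,p}}$ from H\"older, and obtain both surjectivity and the reverse inequality by extending a functional on $Y_{T,q}$ to all of $L^q(0,T;L^2(\omega))$ via Hahn--Banach, representing the extension by some $w\in L^p(0,T;L^2(\omega))$, and feeding the zero-extension of $w$ back into the state equation. This is exactly the paper's combination of (\ref{attain-3}) and (\ref{attain-13})--(\ref{attain-15}).

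Part $(ii)$ is where you genuinely diverge, and your route is both correct and tidier. The paper proves injectivity, surjectivity and norm preservation of $H_q$ by passing through the norm optimal control problem $(NP)_{y_T,p}$ and the variational problem $(JP)_{y_T,q}$: Lemma~\ref{norm-Lp} shows $J_{y_T,q}$ has a unique minimizer whose associated control (\ref{norm-Lp-u}) is the unique optimal control, and Lemma~\ref{wGanglemma2.3} shows conversely that every $u_\xi$ is optimal for its own target and $\xi$ minimizes the corresponding functional; $H_q$ is then a bijection because minimizers and optimal controls are unique. You instead observe that $\xi\mapsto i^*(u_\xi|_\omega)$ satisfies $\langle i^*(u_\xi),\xi\rangle=\|\xi\|^2_{Y_{T,q}}$ and $\|u_\xi\|_{L^p}=\|\xi\|_{Y_{T,q}}$ (the exponent bookkeeping $(q-1)p=q$, $(2-q)p+q=p$ checks out), so that $i^*(u_\xi|_\omega)$ is the normalized duality mapping $J_q$ of $Y_{T,q}$, which is a single-valued, norm-preserving bijection onto $Y^*_{T,q}$ because a closed subspace of the uniformly convex and uniformly smooth space $L^q(0,T;L^2(\omega))$, $1<q<\infty$, inherits both properties; then $H_q=G_p^{-1}\circ J_q$ and everything follows from part $(i)$. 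This buys a shorter proof that also handles the case $\xi\neq 0$, $\eta=0$ automatically (the paper must treat it separately using Lemma~\ref{wanglemma4.3}$(ii)$). What it does not buy is the auxiliary output of the paper's detour: Lemmas~\ref{norm-Lp} and~\ref{wGanglemma2.3} are reused in Sections 3 and 4 (e.g.\ in Lemmas~\ref{JIAlemma3.3}, \ref{lemma-NP-control} and \ref{wanglemma4.2}), so the paper's longer route is not wasted effort in context. One small point you should still make explicit: for $q<2$ the factor $\|\xi(t)\|_\omega^{q-2}$ in (\ref{iso-2}) requires $\|\xi(t)\|_\omega\neq 0$ for $t\in[0,T)$, which is not automatic for an abstract $L^q$-limit and is exactly what Lemma~\ref{wanglemma4.3}$(ii)$ (unique continuation) supplies; your norm computations survive without it, but the pointwise formula for $u_\xi$ does not.
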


\begin{Theorem}\label{theorem-bang-bang}
Let $y_0\in L^2(\Omega)\setminus\{0\}$.
 Suppose that (\ref{Y-Z}) holds. Then $(TP)^{M,p}_{y_0}$ has the bang-bang property if and only if
 $M>\widehat N_p(y_0)$, where $\widehat N_p(y_0)$ is given by (\ref{HUANG1.12}).

\end{Theorem}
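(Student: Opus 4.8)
The plan is to prove the theorem by reducing the time optimal problem $(TP)^{M,p}_{y_0}$ to the norm optimal problem $(NP)^{T,p}_{y_0}$, and then to establish the bang-bang property for the latter through the duality furnished by Theorem~\ref{iso} together with the hypothesis~(\ref{Y-Z}).

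First I would record the monotonicity and continuity of $T\mapsto N_p(T,y_0)$: it is strictly decreasing, continuous, blows up as $T\to 0^+$, and decreases to $\widehat N_p(y_0)$ as $T\to\infty$ (the existence of this limit being Lemma~\ref{lemma-N-t-lim}). From this the easy implication follows by contraposition: if $M\leq \widehat N_p(y_0)$ then $N_p(T,y_0)>\widehat N_p(y_0)\geq M$ for every finite $T$, so no control of $L^p$-norm at most $M$ can steer $y_0$ to $0$ in finite time; hence $(TP)^{M,p}_{y_0}$ admits no optimal control and, by the convention of Remark~\ref{remarkwang1.1}, does not have the bang-bang property. Conversely, when $M>\widehat N_p(y_0)$, continuity and strict monotonicity yield a unique $T^\ast$ with $N_p(T^\ast,y_0)=M$, and I would check that $T^\ast=T_p(M,y_0)$ and that every optimal control of $(TP)^{M,p}_{y_0}$ is, after restriction to $(0,T^\ast)$, an optimal control of $(NP)^{T^\ast,p}_{y_0}$ with $L^p(0,T^\ast;L^2(\Omega))$-norm equal to $M$. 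Thus the bang-bang property for $(TP)$ reduces to that for $(NP)^{T^\ast,p}$.

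The heart of the argument is the bang-bang property of $(NP)^{T^\ast,p}$. The constraint $y(T^\ast;y_0,u)=0$ translates, via the adjoint equation~(\ref{adjoint-equation}) and integration by parts, into the linear condition $\int_0^{T^\ast}\langle u(t),\chi_\omega\varphi(t;T^\ast,z)\rangle\,dt=\langle\eta,z\rangle$ for all $z\in L^2(\Omega)$, where $\eta=-y(T^\ast;y_0,0)$. Minimizing the $L^p$-norm under this constraint is dual to an extremal problem over the adjoint data, whose minimizer I realize as an element $\xi^\ast\in Y_{T^\ast,q}$; by the explicit form of $H_q$ in~(\ref{huangwang1.15})--(\ref{iso-2}) of Theorem~\ref{iso}, the corresponding optimal control is $u^\ast=u_{\xi^\ast}$, so that $\|\chi_\omega u^\ast(t)\|$ is a positive power of $\|\xi^\ast(t)\|_\omega$ (for $1<p<\infty$), while for $p=\infty$ the duality map forces $\|\chi_\omega u^\ast(t)\|=N_\infty(T^\ast,y_0)$ wherever $\xi^\ast(t)\neq 0$. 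Since $\eta\neq 0$ (because $y_0\neq 0$), the optimizer $\xi^\ast$ is nonzero.

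The decisive step, and the expected main obstacle, is to pass from ``$\xi^\ast\neq 0$ in $L^q(0,T^\ast;L^2(\omega))$'' to ``$\xi^\ast(t)\neq 0$ for a.e.\ $t$''. Here the hypothesis~(\ref{Y-Z}) is essential: it guarantees $\xi^\ast\in Z_{T^\ast,q}$, i.e.\ $\xi^\ast=\chi_\omega\psi$ for a genuine solution $\psi\in C([0,T^\ast);L^2(\Omega))$ of the adjoint equation. Were $\chi_\omega\psi(t)=0$ on a set of positive measure in time, a qualitative unique continuation property for the parabolic equation~(\ref{adjoint-equation}) with the bounded, time-dependent potential $a$ would force $\psi\equiv 0$, contradicting $\xi^\ast\neq 0$. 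Therefore $\xi^\ast(t)\neq 0$ for a.e.\ $t$, and the representation of $u^\ast$ above delivers $\|\chi_\omega u^\ast(t)\|\neq 0$ a.e.\ (resp.\ $=N_\infty$ a.e.) together with the saturation $\|\chi_\omega u^\ast\|_{L^p}=N_p(T^\ast,y_0)=M$. This is precisely the bang-bang property, which then transfers to $(TP)^{M,p}_{y_0}$ by the reduction of the previous paragraph. I emphasize that it is exactly this reliance on~(\ref{Y-Z}) --- rather than on the time-analyticity of solutions available only for time-invariant potentials --- that makes the argument work in the present time-varying setting.
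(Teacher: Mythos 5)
Your overall architecture agrees with the paper's: reduce $(TP)^{M,p}_{y_0}$ to $(NP)^{T,p}_{y_0}$ by showing that the bang-bang property holds exactly when $M=N_p(T,y_0)$ for some $T$, then invoke monotonicity and continuity of $N_p(\cdot,y_0)$ and the bang-bang property of the norm problem. However, there is a genuine gap, and it sits precisely at the point you wave through. You assert that $T\mapsto N_p(T,y_0)$ is continuous and hence surjective onto $(\widehat N_p(y_0),\infty)$, but this is the hardest step of the whole proof and the \emph{only} place (together with the existence of the dual minimizer when $p=\infty$) where the hypothesis (\ref{Y-Z}) is actually needed. Right-continuity follows from a weak-$*$ compactness argument on controls, but left-continuity requires taking $T_n\nearrow\widehat T$, extracting a limit $\psi$ of the dual minimizers $\psi_n$ of $J^{T_n,q}_{y_0}$ by a diagonal/compactness argument on $[0,T_k]$ for $T_k<\widehat T$, and then recognizing $\chi_\omega\psi$ as an admissible competitor for $J^{\widehat T,q}_{y_0}$. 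The limit is a genuine solution with $\chi_\omega\psi\in L^q(0,\widehat T;L^2(\omega))$, i.e.\ an element of $Z_{\widehat T,q}$, but the variational problem is posed over $Y_{\widehat T,q}$; this is exactly where $Z_{\widehat T,q}=Y_{\widehat T,q}$ enters. Without left-continuity, $N_p(\cdot,y_0)$ could jump and miss some values $M>\widehat N_p(y_0)$, and your reduction (which, like the paper's Lemma~\ref{lemma-bang-bang}, is an ``iff'' with $M=N_p(T,y_0)$) would break down for those $M$.

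Compounding this, you misattribute the role of (\ref{Y-Z}). The passage from ``$\xi^\ast\neq 0$ in $L^q$'' to ``$\xi^\ast(t)\neq 0$ for every $t\in[0,T^\ast)$'' does \emph{not} require (\ref{Y-Z}): the inclusion $Y_{T,q}\subset Z_{T,q}$ (every element of $Y_{T,q}$ is the trace $\chi_\omega\varphi$ of a genuine solution $\varphi\in C([0,T);L^2(\Omega))$) holds unconditionally, by the observability estimate, compactness of the heat flow, and unique continuation; this is Lemma~\ref{wanglemma4.3}, proved without any hypothesis on $a$ beyond boundedness. The hypothesis (\ref{Y-Z}) is the \emph{reverse} inclusion $Z_{T,q}\subset Y_{T,q}$. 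Consequently the bang-bang property of $(NP)^{T,p}_{y_0}$ for a fixed $T$ (the paper's Lemma~\ref{lemma-NP-control}) holds with no assumption (\ref{Y-Z}) at all. A further, smaller issue: for $p=\infty$ the dual representation $u^\ast=u_{\xi^\ast}$ via $H_q$ is not available (the paper explicitly remarks that the analogues of Lemmas~\ref{norm-Lp} and \ref{wGanglemma2.3} are unknown for $q=1$), so your claim that ``the duality map forces $\|\chi_\omega u^\ast(t)\|=N_\infty$'' needs an independent justification; the paper instead cites the known bang-bang result for $(NP)^{T,\infty}_{y_0}$ from \cite{PW}.
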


\begin{Remark}\label{WANGreamrk1.1}
$(i)$ It is proved that $\|\xi(t)\|_\omega\neq 0$ for each $t\in
[0,T)$,  when $\xi\in Y_{T,q}\setminus \{0\}$ (see
Lemma~\ref{wanglemma4.3}). Hence, $u_\xi$ in (\ref{iso-2}) is
well-defined; $(ii)$  $H_q$ is nonlinear except
 for the case that $q=2$;
$(iii)$ It is proved that (\ref{Y-Z}) holds for the case where
$a(x,t)=a_1(x)+a_2(t)$ in $\Omega\times\mathbb R^+$, with $a_1\in
L^\infty(\Omega),\,a_2\in L^\infty(\mathbb R^+)$ (see Proposition
\ref{lemma-YT-ZT}). Unfortunately, we don't know if it holds when
$a=a(x,t)$ in $\Omega\times\mathbb R^+$; $(iv)$ It is worth
mentioning that when $y_0\in L^2(\Omega)\setminus\{0\}$,
$(TP)^{M,p}_{y_0}$ has optimal controls if and only if $M>\widehat
N_p(y_0)$ (see Proposition~\ref{proposition-TP-M-y0}).
\end{Remark}

The attainable subspaces play important roles in the studies of
control problems governed by Equation (\ref{originalequation}) (see,
for instance, \cite{WX-2} where the connection of attainable
subspaces and the stabilization for some periodic evolution system
are provided). To our surprise, the studies on the attainable
subspaces of internally controlled heat equations are quite limited
from the past publications. In \cite{PW0}, the author provided a way
to characterize the elements of a subspace of $A_{T, 2}$, via a
Riesz basis (see Remarks after Theorem 2 on page 530 in \cite{PW0}).
The method used there is borrowed from \cite{LR} and \cite{L}, where
the elements of a subspace of the controlled wave equation (without
the geometric condition imposed on the control region) are
explicitly expressed via a Riesz basis. In \cite{WX-2} (see also
\cite{WX-3}), the authors presented some  properties of attainable
subspaces for some $T$-periodic evolution systems.  Those properties
gives the connection of the space $\bigcup_{t>0} A_{t,\infty}$ and
the spaces $A_{kT,\infty}$, $k\in \mathbb{N}$. The observations
presented in Theorem~\ref{iso} seem to be new. From these
observations, we can see that the structure of the attainable
subspace $A_{T, p}$ is very complicated, since $Y_{T, q}$ is the
completion of the function space $X_{T,q}$ under the norm of
$L^q(0,T;L^2(\omega))$.

 The bang-bang property is one of the most important
properties of time optimal control problems, from which one can
derive the uniqueness of the optimal control (see \cite{HOF} and
\cite{Wang}) and the equivalence of
 the minimal time and norm controls (see \cite{GL}, \cite{WZ} and \cite{WX-1}).
 The bang-bang property  was
first built up in \cite{HOF2} for $(TP)^{M,\infty}_{y_0}$ where
$\omega=\Omega$ and $a$ is time-invariant. When $p\in (1,\infty)$,
 $\omega\subset\subset\Omega$ and $a$ is time-invariant,
the bang-bang property of  $(TP)^{M,p}_{y_0}$ was studied in
\cite{GL}. It was first realized in  \cite{MS} (partially inspired
by the work \cite{EJPGS}) that the bang-bang property of
$(TP)^{M,\infty}_{y_0}$, where $w\subset\subset\Omega$ and $a$ is
time-invariant, can be derived from the E-controllability: {\it For
each $T>0$, each measurable subset $E\subset(0,T)$ of positive
measure and each $y_0\in L^2(\Omega)$, there is
 a control $u\in L^\infty(0,T;L^2(\Omega))$ with $\|u\|_{L^\infty(0,T;L^2(\Omega))}\leq C(\Omega,\omega,T,E)\|y_0\|$
 s.t. $y(T;y_0,\chi_Eu)=0$ (where $\chi_E$ is the characteristic function of
 $E$)}. In fact,  once the E-controllability holds, one can easily
prove the bang-bang property by  contradiction, through using the
E-controllability and the time-invariance of the system. The
E-controllability was first built up for the case where $a=0$ (see
\cite{Wang}), and then was extended to the case where $a$ is
time-varying (see \cite{PW} and \cite{PWCZ}).
 Here, we would like
to mention that when $\omega\subset\subset\Omega$, the bang-bang
property for some time-invariant semilinear heat equations was first
built up in \cite{PWCZ}, via a very smart way.
  However,  we are not able to use the methods in \cite{GL} and
  \cite{MS} (see also \cite{Wang}) to derive the bang-bang property
  of $(TP)^{M,p}_{y_0}$ where  $\omega\subset\subset\Omega$ and $a$ is time-varying (even for the special case where $\omega\subset\subset\Omega$ and
  $a(x,t)=a_1(x)+a_2(t)$ with $a_1\in L^\infty(\Omega)$ and $a_2\in L^\infty(\mathbb{R}^+)$).
Our Theorem~\ref{theorem-bang-bang} provides the sufficient
(\ref{Y-Z}) to ensure the bang-bang property for the time-varying
case. This theorem,  along with Proposition \ref{lemma-YT-ZT},
implies the bang-bang property for the above-mentioned special case.
   About works on the
time optimal control problems, we would like to mention the papers
\cite{AEWZ, Raymond,  Barbu,  HOF, HOF2, ITO, KL1, KL2, LW, LW-1,
QL, MRT, MS,  PW,  PWCZ, PWZ, EJPGS, C.S.E.T, Wang, WX-0, WZ, Y,
CZ-1, CZ-2, Guo-Zh} and the references therein.

The rest of the paper is organized as follows: Section 2 proves
Theorem \ref{iso}. Section 3 presents some properties on
$(NP)_{y_0}^{T,p}$. Section 4 provides the proof of  Theorem
\ref{theorem-bang-bang}.

\section{Attainable subspaces}

The aim of this section is to prove Theorem~\ref{iso}. We start with
proving its first part.

\begin{proof}[Proof of the  part $(i)$ of Theorem~\ref{iso}.]
 First of all,  from  equations (\ref{originalequation}) and (\ref{adjoint-equation}), one can easily check that
 \begin{equation}\label{jiaxu2.1}
 \int_0^T\langle v(t), \chi_\omega\varphi(t; T, z)\rangle \,\mathrm{d}t =
 \langle y(T; 0, v),z\rangle\;\;\mbox{for all}\;\; z\in L^2(\Omega),
 v\in L^p(0,T; L^2(\Omega)).
 \end{equation}
 Let $y_T\in A_{T,p}$. Then $y_T=y(T;0,\hat{u})$ for some $\hat u\in L^p(0,T;L^2(\Omega))$. Define
 $
  \mathcal F_{y_T, q}: X_{T,q}\longrightarrow \mathbb R
 $
 by setting
 \begin{equation}\label{attain-1}
    \mathcal F_{y_T, q}(\chi_\omega \varphi(\cdot;T,z)) = \int_0^T \langle \hat u(t),\chi_\omega\varphi(t;T,z)
    \rangle \,\mathrm{d}t\;\;\mbox{for each}\;\; z\in L^2(\Omega).
 \end{equation}
From (\ref{jiaxu2.1}) and (\ref{attain-1}), one can easily check
that  $\mathcal F_{y_T, q}$ is well-defined and linear. Meanwhile,
using the H$\ddot{o}$lder's inequality to the right side of
  (\ref{attain-1}), we see that  $\mathcal F_{y_T, q}$ is bounded.
  Thus $\mathcal F_{y_T, q}\in X^*_{T,q}$. Since $X^*_{T,q}=Y^*_{T,q}$ (see (\ref{wang1.8})), we have
\begin{equation}\label{WANGHUANG2.2}
  \mathcal F_{y_T, q}\in Y^*_{T,q}.
\end{equation}
Define $G_{p}:~ A_{T,p}\longrightarrow Y^*_{T,q}$ by setting
 \begin{equation}\label{attain-2}
  G_p (y_T)=\mathcal F_{y_T, q} \;\;\mbox{for each}\;\; y_T\in A_{T,p}.
 \end{equation}
 Clearly, $G_p$ is linear. From (\ref{attain-1}) and (\ref{jiaxu2.1}), one can easily verify that  $G_p$ is injective.

 We now prove that  $G_p$ is surjective. Let $i:~ Y_{T,q}\rightarrow L^q(0,T;L^2(\omega))$ be the embedding map and
 $i^*:~ L^p(0,T;L^2(\omega))\rightarrow Y^*_{T,q}$ be the adjoint operator of $i$. We
 claim that
 \begin{equation}\label{attain-3}
  \mbox{Range} (i^*)=Y^*_{T,q},\;\;\mbox{i.e.,}\;\;i^*\;\;\mbox{is
  surjective}.
 \end{equation}
 By the Hahn-Banach theorem, for each $\mathcal F\in Y^*_{T,q}$,  there is a $\widetilde {\mathcal F}\in \big(L^q(0,T;L^2(\omega))\big)^*$ s.t.
\begin{equation}\label{attain-6}
 \widetilde{\mathcal F}(\xi)=\mathcal F(\xi)\;\;\mbox{for each}\;\;\xi\in Y_{T,q}
\end{equation}
and
\begin{equation*}
 \|\widetilde {\mathcal F} \|_{L\left(L^q(0,T;L^2(\omega));\mathbb R\right)} = \|\mathcal F\|_{Y_{T,q}^*}.
\end{equation*}
According to  the Riesz representation theorem, there is a $\hat
v\in L^p(0,T;L^2(\omega))$ s.t.
\begin{equation}\label{attain-7}
    \widetilde {\mathcal F}(\psi)=\int_0^T \langle \hat v(t),\psi(t) \rangle_{\omega}\, \mathrm{d}t\;\;\mbox{for each}\;\;\psi\in L^q(0,T;L^2(\omega)).
\end{equation}
Because $X_{T,q}\subset Y_{T,q}$, it follows from (\ref{attain-6})
and (\ref{attain-7}) that
\begin{equation*}
 \mathcal F(\chi_\omega\varphi(\cdot;T,z)) = \int_0^T \langle \hat v(t),\chi_\omega\varphi(t;T,z) \rangle_\omega \,\mathrm{d}t\;\;\mbox{for each}\;\;z\in L^2(\Omega).
\end{equation*}
Thus, it holds that
\begin{eqnarray*}
  \langle i^*(\hat v),\chi_\omega\varphi(\cdot,T,z) \rangle_{Y^*_{T,q},Y_{T,q}} &=& \langle \hat v,
   \chi_\omega\varphi(\cdot;T,z) \rangle_{L^p(0,T;L^2(\omega)),L^q(0,T;L^2(\omega))} \\
    &=& \int_0^T \langle \hat v(t),\chi_\omega\varphi(t;T,z) \rangle_\omega \;\mathrm{d}t\\
    &=&\mathcal F(\chi_\omega\varphi(\cdot;T,z))\;\;\mbox{for each}\;\;z\in L^2(\Omega).
\end{eqnarray*}
This, along with  (\ref{wang1.8}), yields that $i^*(\hat v)=\mathcal
F, $ which leads to (\ref{attain-3}).

By  (\ref{attain-3}),  for each $\mathcal F\in Y^*_{T,q}$, we can
find a  $v\in L^p(0,T;L^2(\omega))$ s.t. $i^*(v)=\mathcal F$. We
extend $v$ over $\Omega\times (0, T)$ by setting it to be $0$ on
$(\Omega\setminus\omega)\times (0,T)$, and denote the extension by
$\widetilde v$. Then  $\widetilde v\in L^p(0,T;L^2(\Omega))$ and
$\widetilde y_T\triangleq y(T;0,\widetilde v)\in A_{T,p}$. Moreover,
\begin{eqnarray}\label{attain-4}
  \mathcal F_{\widetilde y_T, q}(\chi_\omega\varphi(\cdot;T,z)) &=& \int_0^T \langle \widetilde v(t), \chi_\omega\varphi(t;T,z) \rangle \;\mathrm{d}t \nonumber\\
      &=& \int_0^T \langle v(t), \chi_\omega\varphi(t;T,z) \rangle_\omega \, \mathrm{d}t\;\;\mbox{for each}\;\;z\in L^2(\Omega)
\end{eqnarray}
and
\begin{equation}\label{attain-5}
 G_p(\widetilde y_T) = \mathcal F_{\widetilde y_T, q}.
\end{equation}
On the other hand, since $i^*(v)=\mathcal F$ and
\begin{eqnarray*}
  \langle i^*(v),\chi_\omega\varphi(\cdot;T,z) \rangle_{Y^*_{T,q},Y_{T,q}} &=& \langle v,i(\chi_\omega\varphi(\cdot;T,z)) \rangle_{L^p(0,T;L^2(\omega)),L^q(0,T;L^2(\omega))} \\
   &=& \langle v,\chi_\omega\varphi(\cdot;T,z) \rangle_{L^p(0,T;L^2(\omega)),L^q(0,T;L^2(\omega))} \\
   &=& \int_0^T \langle v(t),\chi_\omega\varphi(t;T,z) \rangle_\omega\, \mathrm{d}t\;\;\mbox{for each}\;\;z\in L^2(\Omega),
\end{eqnarray*}
 we have
 \begin{equation*}
  \mathcal F(\chi_\omega\varphi(\cdot;T,z)) = \int_0^T \langle v(t),\chi_\omega\varphi(t;T,z) \rangle_{\omega} \,\mathrm{d}t\;\;\mbox{for each}\;\;z\in L^2(\Omega).
 \end{equation*}
 This, along with (\ref{attain-4}), (\ref{wang1.8}) and (\ref{attain-5}), yields
 $  \mathcal F=\mathcal F_{\widetilde y_T, q}$ and $G_p(\widetilde y_T)=\mathcal F$.
  Hence, $G_p$ is surjective.

 Finally, we show that
\begin{equation}\label{attain-8}
 \|y_T\|_{A_{T,p}} = \|\mathcal F_{y_T, q}\|_{Y^*_{T,q}}\;\;(\mbox{i.e.,}\;\;\|y_T\|_{A_{T,p}} = \|G_p(y_T)\|_{Y^*_{T,q}},)\;\;\mbox{for each}\;\; y_T\in
 A_{T,p}.
\end{equation}
Let $y_T\in A_{T,p}$. Arbitrarily take a  $\hat u\in
L^p(0,T;L^2(\Omega))$ such that  $y_T=y(T;0,\hat u)$. From
 (\ref{attain-1}) and (\ref{wang1.8}), it follows
that
\begin{equation}\label{attain-9-1}
 \langle \mathcal F_{y_T, q},\xi \rangle_{Y^*_{T,q},Y_{T,q}} = \int_0^T \langle \hat u(t),\xi(t) \rangle_{w} \,\mathrm{d}t\;\;\mbox{for each}\;\;\xi\in Y_{T,q}.
\end{equation}
Hence, it holds that
\begin{equation*}\label{attain-10}
 \|\mathcal F_{y_T, q} \|_{Y^*_{T,q}} \leq \|\hat u\|_{L^p(0,T;L^2(\omega))} \leq \|\hat u\|_{L^p(0,T;L^2(\Omega))},
\end{equation*}
which, as well as (\ref{jiaxu1.6}),  leads to
\begin{eqnarray}\label{attain-11}
 \|\mathcal F_{y_T, q} \|_{Y^*_{T,q}} &\leq& \inf \{\|u\|_{L^p(0,T;L^2(\Omega))}\,|~y(T;0,u)=y_T\}  =   \|y_T\|_{A_{T,p}}.
\end{eqnarray}
Conversely, we fix a $\hat{v}\in L^p(0,T;L^2(\Omega))$ s.t.
$y(T;0,\hat{v})=y_T$.
 It follows from (\ref{attain-9-1}) that
\begin{equation}\label{attain-12}
 \Big| \int_0^T \langle \hat{v}(t),\xi(t) \rangle_\omega \mathrm{d}t \Big| \leq \|\mathcal F_{y_T, q} \|_{Y^*_{T,q}} \cdot \|\xi\|_{Y_{T,q}}
\;\;\mbox{for each}\;\;\xi\in Y_{T,q}.
\end{equation}
  Define
$G^{\hat{v}}:~Y_{T,q}\rightarrow \mathbb R$ by
\begin{equation}\label{Gu}
 G^{\hat v}(\xi)=\int_0^T \langle \hat v(t),\xi(t) \rangle_\omega\, \mathrm{d}t\;\;\mbox{for each}\;\;\xi\in Y_{T,q}.
\end{equation}
By (\ref{Gu}) and (\ref{attain-12}), $G^{\hat v}\in Y^*_{T,q}$ and $
\| G^{\hat v} \|_{Y^*_{T,q}} \leq \| \mathcal F_{y_T, q}
\|_{Y^*_{T,q}}$. Then, by the Hahn-Banach theorem, the Riesz
representation theorem and (\ref{Gu}), there is a $v\in
L^p(0,T;L^2(\omega))$ s.t.
\begin{equation}\label{attain-13}
 \|v\|_{L^p(0,T;L^2(\omega))} \leq \| \mathcal F_{y_T,q} \|_{Y^*_{T,q}}
\end{equation}
and
\begin{equation}\label{attain-14}
 \int_0^T \langle \hat{v}(t),\xi(t) \rangle_\omega \,\mathrm{d}t = \int_0^T \langle v(t),\xi(t) \rangle_\omega \,\mathrm{d}t\;\;\mbox{for each}\;\;\xi\in Y_{T,q}.
\end{equation}
Since $X_{T,q}\subset Y_{T,q}$, we  have from (\ref{attain-14}) that
\begin{equation}\label{WAng2.19}
\int_0^T \langle \hat{v}(t),\chi_\omega\varphi(t;T,z) \rangle\;
\mathrm{d}t = \int_0^T \langle  \widetilde
v(t),\chi_\omega\varphi(t;T,z) \rangle \;\mathrm{d}t\;\;\mbox{for
each}\;\;z\in L^2(\Omega),
\end{equation}
where $\widetilde v$ is the extension of $v$ over $\Omega\times
(0,T)$ such that $\widetilde v=0$ over $\Omega\setminus\omega \times
(0,T)$.  Since $y(T;0,\hat{v})=y_T$, one can easily check, by using
(\ref{WAng2.19}) and (\ref{jiaxu2.1}), that $y_T=y(T;0,\widetilde
v)$. This, along with  (\ref{jiaxu1.6}) and (\ref{attain-13}), leads
to
\begin{equation}\label{attain-15}
 \|y_T\|_{A_{T,p}} \leq \|\widetilde v\|_{L^p(0,T;L^2(\Omega))}=\| v\|_{L^p(0,T;L^2(\omega))} \leq \| \mathcal F_{y_T} \|_{Y^*_{T,q}}.
\end{equation}
Now, (\ref{attain-8}) follows from (\ref{attain-11}) and
(\ref{attain-15}). This completes the proof of the part $(i)$ of
Theorem~\ref{iso}.

\end{proof}

 To prove the part $(ii)$ of
Theorem~\ref{iso}, we need to present some properties on $Y_{T,q}$.

\begin{Lemma}\label{wanglemma4.3}
Let $1\leq q<\infty$. $(i)$ $Y_{T,q}$ consists of all such functions
$\chi_\omega\varphi\in L^q(0,T;L^2(\omega))$  that  $\varphi\in C([0,T); L^2(\Omega))$  solves Equation (\ref{adjoint-equation}),
  and
$\chi_\omega\varphi=\lim_{n\rightarrow\infty}\chi_\omega\varphi(\cdot;
T, z_n)$ for some sequence  $\{z_n\}\subset L^2(\Omega)$, where the
limit is taken in $L^q(0,T;L^2(\omega))$; $(ii)$ When $\xi\in
Y_{T,q}\setminus\{0\}$, it holds that $\|\xi(t)\|_{\omega}\neq 0$
for each $t\in [0,T)$.

\end{Lemma}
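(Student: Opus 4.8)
The plan is to prove part $(i)$ by the standard observability/interior‑convergence mechanism for parabolic equations, and part $(ii)$ by combining a fixed‑time spatial unique continuation with backward uniqueness.

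For part $(i)$, the only nontrivial inclusion is that every element of the closure $Y_{T,q}$ is already the $\omega$-trace of a genuine solution of (\ref{adjoint-equation}) that is continuous on $[0,T)$; the reverse inclusion is immediate from the definition (\ref{wang1.8}). So I would start from $\xi\in Y_{T,q}$, pick $\{z_n\}\subset L^2(\Omega)$ with $\chi_\omega\varphi(\cdot;T,z_n)\to\xi$ in $L^q(0,T;L^2(\omega))$, and upgrade this $\omega$-convergence to interior convergence of the full solutions. The device is the observability inequality for (\ref{adjoint-equation}): for each $t_0\in[0,T)$ there is $C(t_0)>0$, bounded on every compact subinterval of $[0,T)$, such that $\|\varphi(t_0;T,z)\|\le C(t_0)\int_{t_0}^T\|\chi_\omega\varphi(t;T,z)\|_\omega\,\mathrm{d}t$ for all $z\in L^2(\Omega)$. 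Applying it to the differences $z_n-z_m$ (using linearity of $z\mapsto\varphi(\cdot;T,z)$) together with the finite‑interval bound $\|\cdot\|_{L^1(t_0,T)}\le T^{1-1/q}\|\cdot\|_{L^q(t_0,T)}$, the sequence $\{\varphi(\cdot;T,z_n)\}$ is uniformly Cauchy in $L^2(\Omega)$ on every subinterval $[0,t_1]\subset[0,T)$. Hence it converges uniformly on compacts to some $\varphi\in C([0,T);L^2(\Omega))$; passing to the limit in the equation shows $\varphi$ solves (\ref{adjoint-equation}), and comparing the uniform‑on‑compacts limit with the $L^q$ limit gives $\chi_\omega\varphi=\xi$. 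This produces exactly the representation asserted in $(i)$.

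For part $(ii)$ I argue by contradiction. Given $\xi\in Y_{T,q}\setminus\{0\}$, part $(i)$ writes $\xi=\chi_\omega\varphi$ with $\varphi\not\equiv0$ a solution of (\ref{adjoint-equation}) in $C([0,T);L^2(\Omega))$. Suppose $\|\xi(t_*)\|_\omega=0$ for some $t_*\in[0,T)$, i.e. $\varphi(t_*)=0$ a.e. on the open set $\omega$. I would first invoke the fixed‑time (spatial) strong unique continuation property for parabolic equations with bounded potentials: vanishing on an open subset at a single time level forces $\varphi(t_*)\equiv0$ on the connected component, hence on $\Omega$. I would then invoke backward uniqueness for (\ref{adjoint-equation}): a solution vanishing at one instant vanishes identically, so $\varphi\equiv0$, contradicting $\xi\neq0$. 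Therefore $\|\xi(t)\|_\omega\neq0$ for every $t\in[0,T)$.

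The main obstacle is securing the two unique‑continuation‑type inputs in the stated generality, namely for a time‑varying bounded potential $a\in L^\infty(\Omega\times\mathbb R^+)$. In part $(i)$ the delicate point is an observability estimate whose right‑hand side is an $L^1$-in‑time (equivalently, any $L^q$-in‑time, $1\le q<\infty$) norm of the observation; the familiar $L^2$-in‑time version suffices only for $q\ge2$, and covering $1\le q<2$ is exactly where care is needed. In part $(ii)$ the crux is the fixed‑time spatial unique continuation, since the merely bounded dependence of $a$ on $x$ rules out the spatial‑analyticity argument available in the autonomous case; one must instead appeal to strong unique continuation for parabolic operators (infinite‑order vanishing at a time slice), which together with backward uniqueness for heat equations with bounded potentials closes the argument.
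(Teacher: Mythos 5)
Your proof is correct, and part $(i)$ follows a genuinely different, in fact more economical, route than the paper's. The paper never shows that $\{\varphi(\cdot;T,z_n)\}$ is Cauchy: it uses the observability bound (\ref{newyear2.21}) only to obtain the uniform bound $\|\varphi(T_{k+1};T,z_n)\|\leq C(k)$, then extracts convergent subsequences on each $[0,T_k]$ by parabolic compactness, and identifies all subsequential limits by invoking the unique continuation estimate of \cite{PWCZ}, \cite{PW1} (any two subsequential limits share the $\omega$-trace $\xi$ on $[0,T_k]$, hence coincide). Your observation that the same observability inequality can be applied to the differences $z_n-z_m$, by linearity of $z\mapsto\varphi(\cdot;T,z)$, upgrades the $L^q(0,T;L^2(\omega))$-Cauchy property of the traces directly to uniform Cauchyness of the full solutions on every $[0,T_k]$; this removes both the compactness step and the use of unique continuation from part $(i)$, and the $L^1$-in-time observability you rely on is exactly the estimate (\ref{beta-t-T-C1}) proved in the paper's Appendix, which is what makes the argument work for all $1\leq q<\infty$ and not just $q\geq 2$. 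For part $(ii)$ the two arguments coincide in substance: the paper cites the unique continuation estimate of \cite{PWCZ} (see also \cite{PW1}) as a black box, and your decomposition into a fixed-time $\omega$-to-$\Omega$ spatial unique continuation followed by backward uniqueness is precisely what that estimate packages. The only imprecision is your attribution of the fixed-time step to ``infinite-order vanishing'' SUCP; what is actually used is the quantitative one-time interpolation inequality of the form $\|\varphi(t_*)\|\leq C\|\chi_\omega\varphi(t_*)\|^{\theta}\|\varphi(\cdot)\|^{1-\theta}$ from those references, which is indeed available for bounded time-varying potentials exactly as you claim, so the gap is one of citation rather than of substance.
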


\begin{proof}
$(i)$ Let $\xi\in Y_{T,q}$. By (\ref{wang1.8}), there is a sequence
$\{z_n\} $ in $L^2(\Omega)$   such that
\begin{equation}\label{wang4.7}
\chi_\omega \varphi(\cdot; T,z_n)\rightarrow \xi\;\;\mbox{strongly
in}\;\; L^q(0,T; L^2(\omega)).
\end{equation}
In particular, $\{\chi_\omega\varphi(\cdot; T, z_n)\} $ is bounded in
${L^q(0,T; L^2(\omega))}$. Let $\{T_k\}\subset (0,T)$ such that $T_k
\nearrow T$ (i.e, $T_k$ strictly monotonically converges to $T$ from
the left). Given a $k\in \mathbb{N}$, by the observability estimate
(see, for instance, \cite{FZ}),
\begin{eqnarray}\label{newyear2.21}
\|\varphi(T_{k+1}; T, z_n)\| &\leq& C(k)\|\chi_\omega\varphi(\cdot;
T,z_n)\|_{L^1(T_{k+1},T; L^2(\omega))}\nonumber\\
 &\leq& C(k)\|\chi_\omega\varphi(\cdot;
T,z_n)\|_{L^q(0,T; L^2(\omega))} \leq C(k)\;\;\mbox{for all}\;\;
n\in \mathbb{N},
\end{eqnarray}
where $C(k)$ stands for  a positive constant depending on $k$ but
independent of $n$, which may vary in different contexts.
Arbitrarily take two subsequences $\{\varphi(\cdot; T,
z_{n_{l_1}})\}$ and $\{\varphi(\cdot; T, z_{n_{l_2}})\}$ from
$\{\varphi(\cdot; T, z_n)\} $. By (\ref{newyear2.21}) and the
properties of heat equations, there are two subsequences of
$\{\varphi(\cdot; T, z_{n_{l_1}})\}$ and $\{\varphi(\cdot; T,
z_{n_{l_2}})\}$ respectively, denoted in the same way, such that
$$
\varphi(\cdot; T, z_{n_{l_1}})\rightarrow \hat \varphi_{k,1}(\cdot);
\;\varphi(\cdot; T, z_{n_{l_2}})\rightarrow \hat
\varphi_{k,2}(\cdot)\;\;\mbox{strongly in}\;\; C([0,T_k];
L^2(\Omega)),
$$
where $\hat \varphi_{k,1}$ and  $\hat \varphi_{k,2}$ solve equation
(\ref{adjoint-equation}) (with $T$ being replaced by $T_k$). These,
along with (\ref{wang4.7}), yield that
$$
\chi_\omega\hat \varphi_{k,1}(t)=\chi_\omega\hat
\varphi_{k,2}(t)=\xi(t)\;\;\mbox{ for a.e.}\; t\in[0, T_k].
$$
Then by the unique continuation estimate for heat equations built up
in  \cite{PWCZ} (see also \cite{PW1}), we have
$$
\hat \varphi_k\triangleq \hat \varphi_{k,1}=\hat
\varphi_{k,2}\;\;\mbox{over}\;\; [0, T_k].
$$
Hence, it holds that
\begin{equation}\label{newyear2.22}
\varphi(\cdot; T, z_n)\rightarrow \hat
\varphi_k(\cdot)\;\;\mbox{in}\;\;C([0, T_k]; L^2(\Omega));
\;\;\chi_\omega\hat \varphi_k=\xi\;\;\mbox{over}\;\;
(0, T_k).
\end{equation}
Since $k$ in the above was arbitrarily taken from $\mathbb{N}$, it
follows from (\ref{newyear2.22}) that
\begin{equation}\label{newyear2.23}
\hat \varphi_k=\hat \varphi_{k+l};\;\;\chi_\omega\hat
\varphi_k=\xi \;\;\mbox{over}\;\; [0,T_k]\;\;\mbox{for all}\;\;
k,l\in \mathbb{N}.
\end{equation}
We now define the function $\hat \varphi$ over $\Omega\times [0, T)$
by setting
$$
\hat \varphi
=\hat\varphi_k\;\;\mbox{over}\;\; [0, T_k],\;
k=1,2,\dots.
$$
Then by (\ref{newyear2.23}), $\hat \varphi$ is well defined; $\hat
\varphi\in C([0, T); L^2(\Omega))$ solves Equation
(\ref{adjoint-equation}); $\xi=\chi_\omega\hat\varphi$. Clearly,
$\chi_\omega\hat\varphi$ is the limit of $\chi_\omega\varphi(\cdot;
T,z_n)$ in $L^q(0,T; L^2(\omega))$ (see (\ref{wang4.7})). Thus, we
have proved $(i)$.

 $(ii)$ Let  $\xi\in Y_{T,q}\setminus\{0\}$. By $(i)$, there is a function
$\varphi\in C([0,T); L^2(\Omega))$, with $\chi_\omega\varphi\in L^q(0,T;L^2(\omega))$, solving
Equation (\ref{adjoint-equation}), such that
$\xi=\chi_\omega\varphi$. Since $\xi\neq 0$ in $Y_{T,q}$, it holds
that $\varphi\neq 0$ in $L^q(0,T;L^2(\omega))$. Then by the unique
continuation estimate in \cite{PWCZ} (see also \cite{PW},
\cite{PW1}), it follows that $\|\chi_\omega\varphi(t)\|\neq 0$ for
each $t\in [0,T)$. This completes the proof.
\end{proof}

The proof of the part $(ii)$ of Theorem~\ref{iso} needs help from
the following norm optimal control problem $ (NP)_{y_T,p}$:
\begin{equation}\label{JIA2.21}
\inf \big\{ \|u\|_{L^p(0,T;L^2(\Omega))} ~\big|
~y(T;0,u)=y_T\big\},
\end{equation}
where  $p\in (1,\infty]$ and $y_T\in A_{T,p}$. The optimal control
and the admissible control to this problem can be defined by a very
similar way as those for $(NP)_{y_0}^{T,p}$ (see Section 1).
 This problem
is related to the variational problem $(JP)_{y_T,q}$:
\begin{equation}\label{attain-17}
 \inf_{\xi\in Y_{T,q}} J_{y_T,q}(\xi)\triangleq  \inf_{\xi\in Y_{T,q}} \Big(\frac{1}{2} \|\xi\|^2_{L^q(0,T;L^2(\omega))} - \mathcal F_{y_T,q}(\xi)\Big),
 \; \xi\in Y_{T,q},
\end{equation}
where $q$ is the conjugate exponent of $p$ and $\mathcal F_{y_T,q}$
is given by (\ref{attain-1}) (see also (\ref{WANGHUANG2.2})).

\begin{Lemma}\label{norm-Lp}
Let $p\in(1,\infty)$ and $q$ be the conjugate exponent of $p$. $(i)$
When $y_T\in A_{T,p} \setminus\{0\}$, it holds that zero (the origin
of $Y_{T,q}$) is not a minimizer of $ J_{y_T,q}$;  $ J_{y_T,q}$ has
a unique minimizer $\chi_\omega \widehat \varphi$ in $Y_{T,q}$,
  where  $\widehat \varphi\in C([0,T); L^2(\Omega))\cap L^q(0,T; L^2(\omega))$ solves Equation (\ref{adjoint-equation});
    $(NP)_{y_T,p}$ has
  a unique optimal control $\widehat u_{y_T,p}$ given by
 \begin{equation}\label{norm-Lp-u}
  \widehat u_{y_T,p}(t)= \|\chi_\omega\widehat\varphi\|^{2-q}_{L^q(0,T;L^2(\omega))}
  \cdot \|\chi_\omega\widehat\varphi(t)\|^{q-2} \cdot \chi_\omega\widehat\varphi(t),~t\in (0,T);
 \end{equation}
$(ii)$ If $y_T=0$ in $A_{T,q}$, then zero is the unique minimizer of
$J_{0,q}$ and the unique optimal control to $(NP)_{0,p}$ is the null
control.
\end{Lemma}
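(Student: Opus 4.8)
The plan is to solve the variational problem $(JP)_{y_T,q}$ by the direct method in the calculus of variations, then extract the optimal control of $(NP)_{y_T,p}$ from the first-order optimality (Euler-Lagrange) condition at the minimizer. The functional $J_{y_T,q}(\xi)=\tfrac12\|\xi\|^2_{L^q(0,T;L^2(\omega))}-\mathcal F_{y_T,q}(\xi)$ is the sum of a convex coercive term and a bounded linear functional, so existence and uniqueness of a minimizer should be routine once I establish the right topological setting on $Y_{T,q}$.

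First I would show that zero is not a minimizer when $y_T\neq 0$. Since $y_T\in A_{T,p}\setminus\{0\}$, the functional $\mathcal F_{y_T,q}\in Y^*_{T,q}$ is nonzero (by part $(i)$ of Theorem~\ref{iso}, $\|\mathcal F_{y_T,q}\|_{Y^*_{T,q}}=\|y_T\|_{A_{T,p}}>0$), so I can pick $\xi_0\in Y_{T,q}$ with $\mathcal F_{y_T,q}(\xi_0)>0$; then for small $\varepsilon>0$, $J_{y_T,q}(\varepsilon\xi_0)=\tfrac{\varepsilon^2}{2}\|\xi_0\|^2-\varepsilon\,\mathcal F_{y_T,q}(\xi_0)<0=J_{y_T,q}(0)$, ruling out the origin as minimizer. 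Next, for existence, I would note that $J_{y_T,q}$ is coercive (the quadratic term dominates the linear one as $\|\xi\|\to\infty$) and strictly convex; a minimizing sequence is therefore bounded in $Y_{T,q}$. The delicate point is that $q$ may differ from $2$, so $Y_{T,q}$ with the $L^q(0,T;L^2(\omega))$-norm is a general $L^q$-type space ($1<q<\infty$), hence reflexive, and I can extract a weakly convergent subsequence. By weak lower semicontinuity of the (convex continuous) norm-squared and weak continuity of the bounded linear functional $\mathcal F_{y_T,q}$, the weak limit is a minimizer; strict convexity gives uniqueness. Call it $\xi^*=\chi_\omega\widehat\varphi$, and invoke Lemma~\ref{wanglemma4.3}$(i)$ to realize $\widehat\varphi\in C([0,T);L^2(\Omega))$ as a genuine solution of the adjoint equation~(\ref{adjoint-equation}).

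Then I would derive the Euler-Lagrange condition. Since $\xi^*$ is an interior minimizer of a convex functional, $0=\tfrac{d}{d\varepsilon}\big|_{\varepsilon=0}J_{y_T,q}(\xi^*+\varepsilon\eta)$ for all $\eta\in Y_{T,q}$. The Gateaux derivative of $\tfrac12\|\cdot\|^2_{L^q}$ produces, by the chain rule on the $L^q$-norm, exactly the weight $\|\xi^*\|^{2-q}_{L^q(0,T;L^2(\omega))}\|\xi^*(t)\|_\omega^{q-2}$ paired against $\xi^*(t)$ in the inner product. This is where Lemma~\ref{wanglemma4.3}$(ii)$ is essential: it guarantees $\|\xi^*(t)\|_\omega\neq0$ for every $t\in[0,T)$, so the factor $\|\xi^*(t)\|_\omega^{q-2}$ is well-defined pointwise even when $q<2$, and the resulting candidate control $\widehat u_{y_T,p}$ in~(\ref{norm-Lp-u}) is a legitimate element of $L^p(0,T;L^2(\omega))$. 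Matching the Euler-Lagrange identity against the defining relation~(\ref{attain-1}) for $\mathcal F_{y_T,q}$ together with the duality~(\ref{jiaxu2.1}) shows that $\widehat u_{y_T,p}$ steers $0$ to $y_T$; a direct $L^q$-$L^p$ Holder computation gives $\|\widehat u_{y_T,p}\|_{L^p}=\|\xi^*\|_{L^q}$, which by part~$(i)$ equals the optimal value $\|y_T\|_{A_{T,p}}$, confirming optimality, and strict convexity transfers to uniqueness of the optimal control.

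The main obstacle I anticipate is the regularity/well-definedness of the nonlinear duality map when $q\neq2$: for $1<q<2$ the exponent $q-2$ is negative, so the Gateaux differentiation of the $L^q$-norm must be justified carefully (differentiating $t\mapsto\|\xi(t)\|_\omega$ where it could vanish), and one must confirm the candidate $\widehat u_{y_T,p}$ actually lies in $L^p$ with finite norm rather than merely being formally defined. Lemma~\ref{wanglemma4.3}$(ii)$ is precisely the tool that removes the pointwise-vanishing pathology, so the burden reduces to an integrability estimate tying $\|\widehat u_{y_T,p}(t)\|_\omega$ back to $\|\xi^*(t)\|_\omega$. Part $(ii)$ of the lemma (the case $y_T=0$) is then immediate: $J_{0,q}(\xi)=\tfrac12\|\xi\|^2\geq0$ with equality only at $\xi=0$, forcing the minimizer to be the origin and the optimal control to be null.
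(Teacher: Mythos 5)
Your proposal is correct and follows essentially the same route as the paper: existence and uniqueness of the minimizer via reflexivity of $Y_{T,q}$ (as a closed subspace of $L^q(0,T;L^2(\omega))$, $1<q<\infty$) together with coercivity and strict convexity, identification of the minimizer through Lemma~\ref{wanglemma4.3}, the Euler--Lagrange equation yielding the control formula (\ref{norm-Lp-u}) (with Lemma~\ref{wanglemma4.3}$(ii)$ guaranteeing the weight is well-defined), a H\"{o}lder comparison against admissible controls for optimality, and strict (uniform) convexity of the $L^p$-norm for uniqueness of the optimal control. The only cosmetic difference is that you rule out the zero minimizer by exhibiting a direction where $J_{y_T,q}$ goes negative, whereas the paper argues by contradiction through one-sided directional derivatives along $X_{T,q}$; these are the same idea.
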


\begin{proof}
$(i)$  Write $y_T=y(T; 0,u_T)$ for some $u_T\in L^p(0,T;
L^2(\Omega))$. By contradiction, we suppose that zero was a
minimizer. Since $X_{T,q}\subset Y_{T,q}$ (see (\ref{wang1.8})), we
would have
$$
0\leq \frac{J_{y_T,q}(\varepsilon \varphi(\cdot; T,
z))}{\varepsilon}\;\;\mbox{for all}\;\;
\varepsilon>0\;\;\mbox{and}\;\; z\in L^2(\Omega).
$$
This, along with (\ref{attain-17}), (\ref{attain-1}) and
(\ref{jiaxu2.1}), yields that $<y_T,z>=0$ for all $z\in
L^2(\Omega)$, which contradicts to the fact that $y_T\neq 0$.

 Since $1<q<\infty$, $L^q(0,T;L^2(\omega))$ is  reflexible.
Thus, $Y_{T,q}$, as a  closed subspace of $L^q(0,T;L^2(\omega))$, is
also reflexible. Meanwhile, one can directly check that
$J_{y_T,q}(\cdot)$ is strictly convex and coercive in $Y_{T,q}$.
Hence,  $ J_{y_T,q}$ has a unique minimizer. Furthermore, it follows
from    Lemma~\ref{wanglemma4.3} that this minimizer can be
expressed by  $\chi_\omega\widehat\varphi\in  L^q(0,T; L^2(\omega))$, where $\widehat
\varphi\in C([0,T); L^2(\Omega))$ solves
Equation (\ref{adjoint-equation})
  and verifies $\chi_\omega\widehat\varphi(t)\neq 0$ for all $t\in [0,T)$.

 Since $\mathcal F_{y_T,q}\in Y_{T,q}^*$,  one can easily derive from (\ref{attain-17}) the following
  Euler-Lagrange equation associated with the minimizer $\chi_\omega\widehat\varphi$:
    \begin{equation}\label{Euler-Lp}
  \int_0^T \langle \widehat u_{y_T,p}(t), \xi(t) \rangle_\omega\; \mathrm{d}t-\mathcal F_{y_T,q}(\xi)=0\;\;\mbox{for each}\;\;\xi\in Y_{T,q},
 \end{equation}
where  $\widehat u_{y_T,p}$ is defined by (\ref{norm-Lp-u}).
  From (\ref{Euler-Lp}) and  (\ref{attain-9-1}), it follows  that
 \begin{equation}\label{Euler-Lp-1}
  \int_0^T \langle \widehat u_{y_T,p}(t), \xi(t) \rangle_\omega \;\mathrm{d}t-\int_0^T \langle v(t), \xi(t) \rangle_\omega \;\mathrm{d}t=0
  \;\;\mbox{for each}\;\;\xi\in Y_{T,q},
 \end{equation}
when $v$ is an admissible control  to $(NP)_{y_T,p}$. This, as well
as (\ref{jiaxu2.1}),  in particular, implies
$$
\langle y(T; 0,   \widehat u_{y_T,p}), z\rangle =\langle y_T,
z\rangle \;\;\mbox{for all}\;\; z\in L^2(\Omega),
$$
which leads to
\begin{equation}\label{henhen2.27}
y(T; 0,   \widehat u_{y_T,p})=y_T.
\end{equation}
On the other hand, it follows from (\ref{norm-Lp-u}) that
\begin{equation}\label{u-psi}
 \| \widehat u_{y_T,p}\|_{L^p(0,T;L^2(\Omega))} = \|\chi_\omega\widehat\varphi
 \|_{L^q(0,T;L^2(\omega))}.
\end{equation}
By (\ref{norm-Lp-u}), (\ref{Euler-Lp-1}), with $\xi=\chi_\omega\widehat\varphi$, and
(\ref{u-psi}), for each admissible control $v$ to $(NP)_{y_T,p}$, we see
 \begin{eqnarray}\label{ad-op}
  \Big(\|\widehat u_{y_T,p}\|_{L^p(0,T;L^2(\Omega))}\Big)^2 &=&\int_0^T \langle \widehat u_{y_T,p}(t), \chi_\omega\widehat\varphi(t) \rangle \;\mathrm{d}t
  =\int_0^T \langle v(t), \chi_\omega\widehat\varphi(t) \rangle \;\mathrm{d}t\nonumber\\
   &\leq& \|v\|_{L^p(0,T;L^2(\Omega))} \cdot \|\chi_\omega\widehat\varphi\| _{L^q(0,T;L^2(\omega))}\nonumber\\
  &=& \|v\|_{L^p(0,T;L^2(\Omega))} \cdot \|\widehat u_{y_T,p}\|_{L^p(0,T;L^2(\Omega))}.
 \end{eqnarray}
 Hence, $ \|\widehat u_{y_T,p}\|_{L^p(0,T;L^2(\Omega))} \leq
 \|v\|_{L^p(0,T;L^2(\Omega))}$, when $v$ is an admissible control to  $(NP)_{y_T,p}$.
From this and  (\ref{henhen2.27}),
 $\widehat u_{y_T,p}$ is an optimal control to $(NP)_{y_T,p}$. The
 uniqueness of the optimal control to $(NP)_{y_T,p}$ follows from
 the uniform convexity of $L^p(0,T;L^2(\Omega))$ (with $1<p<\infty$) immediately.

$(ii)$ Its proof is  trivial.   This completes the proof.

\end{proof}

\begin{Lemma}\label{wGanglemma2.3}
Let $\xi\in Y_{T,q}\setminus\{0\}$ with $q\in (1, \infty)$. Then $(i)$
$u_\xi$ (given by (\ref{iso-2})) is the optimal control to
$(NP)_{y_{T,\xi}, q}$ where $y_{T,\xi}\triangleq y(T; 0, u_\xi)$;
$(ii)$ $\xi$ is the minimizer of $J_{y_{T,\xi}, q}$.
\end{Lemma}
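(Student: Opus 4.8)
The plan is to observe that this lemma is the inverse of Lemma~\ref{norm-Lp}: there one starts from a target $y_T$ and builds, out of the minimizer of $J_{y_T,q}$, the optimal control through formula (\ref{norm-Lp-u}); here one starts from an arbitrary $\xi\in Y_{T,q}\setminus\{0\}$, forms $u_\xi$ by the same recipe (\ref{iso-2}), sets $y_{T,\xi}=y(T;0,u_\xi)$, and must check that $\xi$ and $u_\xi$ play the roles of the minimizer and the optimal control. First I would record that $u_\xi$ is well-defined and belongs to $L^p(0,T;L^2(\Omega))$: by Lemma~\ref{wanglemma4.3}$(ii)$ one has $\|\xi(t)\|_\omega\neq 0$ for every $t\in[0,T)$, so the factor $\|\xi(t)\|_\omega^{q-2}$ is harmless, and a direct computation using $(q-1)p=q$ and $(2-q)p+q=p$ yields $\|u_\xi\|_{L^p(0,T;L^2(\Omega))}=\|\xi\|_{L^q(0,T;L^2(\omega))}$. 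In particular $y_{T,\xi}\in A_{T,p}$, so $J_{y_{T,\xi},q}$ and $\mathcal F_{y_{T,\xi},q}$ are defined; and since $y_{T,\xi}=y(T;0,u_\xi)$, identity (\ref{attain-9-1}) applied with the representing control $u_\xi$ gives
\begin{equation*}
\mathcal F_{y_{T,\xi},q}(\eta)=\int_0^T\langle u_\xi(t),\eta(t)\rangle_\omega\,\mathrm{d}t\quad\text{for all }\eta\in Y_{T,q}.
\end{equation*}

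I would prove part $(ii)$ first. As already exploited in Lemma~\ref{norm-Lp}, $J_{y_{T,\xi},q}$ is strictly convex and coercive on the reflexive space $Y_{T,q}$, so it has a unique minimizer, characterized as the unique point where its first variation vanishes. Thus it suffices to show that $\xi$ is a critical point. Differentiating the map $\eta\mapsto\frac12\|\eta\|^2_{L^q(0,T;L^2(\omega))}$ at $\xi$ (a duality-map computation) gives the directional derivative $\|\xi\|^{2-q}_{L^q}\int_0^T\|\xi(t)\|^{q-2}_\omega\langle\xi(t),\eta(t)\rangle_\omega\,\mathrm{d}t$, which by the very definition (\ref{iso-2}) of $u_\xi$ equals $\int_0^T\langle u_\xi(t),\eta(t)\rangle_\omega\,\mathrm{d}t$. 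Subtracting the derivative of the linear term and using the displayed identity, the first variation of $J_{y_{T,\xi},q}$ at $\xi$ in an arbitrary direction $\eta$ is
\begin{equation*}
\int_0^T\langle u_\xi(t),\eta(t)\rangle_\omega\,\mathrm{d}t-\mathcal F_{y_{T,\xi},q}(\eta)=0,
\end{equation*}
so $\xi$ is the unique minimizer of $J_{y_{T,\xi},q}$, which is $(ii)$.

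For part $(i)$ I would read off optimality from Lemma~\ref{norm-Lp}. First $y_{T,\xi}\neq 0$: otherwise Lemma~\ref{norm-Lp}$(ii)$ would force the minimizer of $J_{0,q}$ to be $0$, contradicting $(ii)$ since $\xi\neq 0$. Hence $y_{T,\xi}\in A_{T,p}\setminus\{0\}$, and Lemma~\ref{norm-Lp}$(i)$ applies: the unique optimal control of the norm optimal control problem with target $y_{T,\xi}$ is obtained by inserting the minimizer of $J_{y_{T,\xi},q}$ into (\ref{norm-Lp-u}); by $(ii)$ that minimizer is $\xi$, and the resulting control is exactly $u_\xi$. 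One could instead argue by hand, as in (\ref{ad-op}): $u_\xi$ is admissible by construction, and for any admissible $v$ the chain $\|u_\xi\|^2_{L^p}=\int_0^T\langle u_\xi,\xi\rangle_\omega\,\mathrm{d}t=\mathcal F_{y_{T,\xi},q}(\xi)=\int_0^T\langle v,\xi\rangle_\omega\,\mathrm{d}t\le\|v\|_{L^p}\|u_\xi\|_{L^p}$ gives $\|u_\xi\|_{L^p}\le\|v\|_{L^p}$. The only delicate points are the well-definedness of $u_\xi$, ensured by Lemma~\ref{wanglemma4.3}$(ii)$, and the passage from a vanishing first variation to a global minimum, supplied by strict convexity; the rest is bookkeeping inherited from Lemma~\ref{norm-Lp}.
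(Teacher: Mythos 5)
Your proposal is correct and follows essentially the same route as the paper: both rest on the identity $\mathcal F_{y_{T,\xi},q}(\eta)=\int_0^T\langle u_\xi(t),\eta(t)\rangle_\omega\,\mathrm{d}t$ obtained from (\ref{attain-9-1}), the norm identity $\|u_\xi\|_{L^p(0,T;L^2(\Omega))}=\|\xi\|_{L^q(0,T;L^2(\omega))}$, and H\"older's inequality, and your ``by hand'' chain for part $(i)$ is exactly the paper's computation in the spirit of (\ref{ad-op}). The only minor difference is in part $(ii)$, where the paper checks $J_{y_{T,\xi},q}(\xi)\le J_{y_{T,\xi},q}(\eta)$ directly via Cauchy--Schwarz and H\"older, whereas you pass through the vanishing first variation plus convexity, which additionally invokes the (standard) differentiability of the $L^q$-norm in your duality-map computation.
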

\begin{proof}
$(i)$ Given an admissible control  $v$ to $(NP)_{y_{T,\xi}, q}$, it
follows from (\ref{attain-9-1}) that
\begin{equation}\label{wanghuang2.29}
\mathcal{F}_{y_{T,\xi},q}(\eta)=\int_0^T\langle u_\xi(t), \eta(t)
\rangle_\omega \;\mathrm{d}t= \int_0^T\langle v(t), \eta(t) \rangle_\omega
\;\mathrm{d}t\;\;\mbox{for each}\;\; \eta\in Y_{T,q}.
\end{equation}
From (\ref{iso-2}), we have
 \begin{equation}\label{jiaxu2031}
 \|u_\xi\|_{L^p(0,T;
L^2(\Omega))}=\|\xi\|_{L^q(0,T;L^2(\omega))}.
\end{equation}
Taking $\eta=\xi$ in the second equality of (\ref{wanghuang2.29}),
using (\ref{iso-2}), (\ref{jiaxu2031}) and the H$\ddot{o}$lder
inequality, we get $\|u_\xi\|_{L^p(0,T; L^2(\Omega))}\leq
\|v\|_{L^p(0,T; L^2(\Omega))}$. Hence, $u_\xi$ is the optimal
control to $(NP)_{y_{T,\xi}, q}$.

$(ii)$ By (\ref{attain-17}), the first equality  of
(\ref{wanghuang2.29}) and (\ref{iso-2}), after some simple
computations involving the Cauchy-Schwartz and the  H$\ddot{o}$lder
inequalities, one can get that $J_{y_{T,\xi},q}(\xi)\leq
J_{y_{T,\xi},q}(\eta)$ for all $\eta\in Y_{T,q}$, i.e, $\xi$ is the
minimizer of $J_{y_{T,\xi},q}$. This completes the proof.

\end{proof}

\begin{Remark}
 Unfortunately, we don't know how to get the similar results in
Lemma~\ref{norm-Lp} and Lemma~\ref{wGanglemma2.3} for the case where
$p=\infty$.
\end{Remark}

Now we continue the proof of  Theorem~\ref{iso}.

\begin{proof} [Proof of the  part $(ii)$ of Theorem~\ref{iso}.]

 Let
$H_q$ be defined by (\ref{huangwang1.15}). We first show that $H_q$
is injective. Let $\xi\neq \eta$ in $Y_{T,q}$. In the case that both
$\xi$ and $\eta$  are not zero, we suppose by contradiction that
$H_q(\xi)=H_q(\eta)$. Then, $y_{T,\xi}\triangleq y(T; 0, u_\xi)=y(T;
0, u_\eta)\triangleq y_{T,\eta}$. By Lemma~\ref{wGanglemma2.3}, both
$\xi$ and $\eta$ are the unique minimizer of $J_{y_{T,\xi},q}$. Thus
$\xi=\eta$ which leads to a contradiction. Hence, $H_p(\xi)\neq
H_p(\eta)$ when $\xi\neq\eta$ in $Y_{T,q}\setminus \{0\}$. In the
case where $\xi\neq 0$ and $\eta=0$, it suffices to show that
$H_q(\xi)\neq 0$. By contradiction, we suppose that $0=H_q(\xi)$. By
(\ref{huangwang1.15}), we have  $y(T;0,u_\xi)= 0$, where $u_\xi$ is
given by (\ref{iso-2}). According to Lemma~\ref{wGanglemma2.3},
$u_\xi$ is the optimal control to $(NP)_{0,p}$. This, along with
$(ii)$ of Lemma~\ref{norm-Lp}, yields that $u_\xi=0$ in
$L^p(0,T;L^2(\Omega))$. However, it follows from
Lemma~\ref{wanglemma4.3}, as well as (\ref{iso-2}), that
$\|u_\xi(t)\|_\omega\neq 0$ when $t\in [0, T)$. This leads to a
contradiction. In summary, we conclude that $H_q$ is injective.

We next show that $H_q$ is surjective. Given $y_T\in A_{T,
p}\setminus\{0\}$, let $\xi$ be the minimizer of $J_{y_T,q}$ in
$Y_{T,q}$. By Lemma~\ref{norm-Lp}, $u_\xi$ (given by (\ref{iso-2}))
is the optimal control to $(NP)_{y_T,p}$. Hence, $ H_q(\xi)= y(T; 0,
u_\xi)=y_T$. This, along with the fact that $H_q(0)=0$, indicates
that $ H_q$ is surjective.

Finally, we show that $H_q$ preserves the norms. Given $\xi\in
Y_{T,q}\setminus\{0\}$, it holds that
$H_q(\xi)=y(T;0,u_\xi)\triangleq y_{T,\xi}$. Since $u_\xi$ is the
optimal control to $(NP)_{y_{T,\xi}, p}$ (see
Lemma~\ref{wGanglemma2.3}), we derive  from (\ref{jiaxu1.6}) that
\begin{equation*}
\|H_q(\xi)\|_{A_{T,p}}=\| y_{T,\xi}\|_{A_{T,p}} =\|u_\xi\|_{L^p(0,T;
L^2(\Omega))}.
\end{equation*}
which, together with (\ref{jiaxu2031}) and (\ref{WANG1.6}), leads to
$\|H_q(\xi)\|_{A_{T,p}}=\|\xi\|_{Y_{T,q}}$. This completes the proof
of the part $(ii)$ of Theorem~\ref{iso}.

\end{proof}

\section{Some properties on $N_p(T, y_0)$}

This section presents some properties on $N_p(T ,y_0)$ (given by
(\ref{NP})).  These properties will be used in the proof of
Theorem~\ref{theorem-bang-bang}. We  focus  on the case where
$y_0\neq 0$, since $N_p(\cdot, 0)\equiv 0$.
\begin{Lemma}\label{NP-YT}
  Let   $p\in(1, \infty]$ and $q$ be the conjugate exponent of $p$. Then
\begin{equation}\label{N(T,y0)}
N_p(T,y_0)=\displaystyle\sup_{z\in L^2(\Omega)\backslash \{0\}}
\dfrac{\langle y(T; y_0,0),z
\rangle}{\|\chi_\omega\varphi(\cdot;T,z)\|_{L^q(0,T;L^2(\Omega))}}\;\;\mbox{for
all}\;\; T>0,\; y_0\in L^2(\Omega)\setminus\{0\}.
\end{equation}
\end{Lemma}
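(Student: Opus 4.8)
The plan is to reduce the minimal-norm value $N_p(T,y_0)$ to the $A_{T,p}$-norm of the free evolution and then to evaluate the resulting dual extremal problem, which is exactly the machinery already assembled in the proof of the part $(i)$ of Theorem~\ref{iso}. Write $y_T^0\triangleq y(T;y_0,0)$ and let $q$ be the conjugate exponent of $p$. By linearity of the flow, $y(T;y_0,u)=y_T^0+y(T;0,u)$, so the constraint $y(T;y_0,u)=0$ is equivalent to $y(T;0,u)=-y_T^0$; in view of (\ref{jiaxu2.1}) this reads
\[
 \int_0^T \langle u(t),\chi_\omega\varphi(t;T,z)\rangle\,\mathrm{d}t = -\langle y_T^0,z\rangle \quad \text{for all } z\in L^2(\Omega).
\]
Consequently $N_p(T,y_0)=\|{-}y_T^0\|_{A_{T,p}}=\|y_T^0\|_{A_{T,p}}$ (the target $-y_T^0$ lies in $A_{T,p}$ by null controllability of the heat equation). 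By (\ref{attain-8}) this equals $\|\mathcal F_{-y_T^0,q}\|_{Y^*_{T,q}}$, and on the dense subspace $X_{T,q}$ one has, by (\ref{jiaxu2.1}), $\mathcal F_{-y_T^0,q}(\chi_\omega\varphi(\cdot;T,z))=-\langle y_T^0,z\rangle$; computing the dual norm over $X_{T,q}$ and replacing $z$ by $-z$ then produces the supremum in (\ref{N(T,y0)}). Note also that $\|\chi_\omega\varphi(\cdot;T,z)\|_{L^q(0,T;L^2(\Omega))}=\|\chi_\omega\varphi(\cdot;T,z)\|_{L^q(0,T;L^2(\omega))}$ since $\chi_\omega\varphi$ is supported in $\omega$, so the two normalizations agree.

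Unpacking this into a self-contained two-sided estimate is the form I would actually write. For the lower bound I take any admissible $u$ and any $z\neq 0$, combine the displayed identity with H\"older's inequality in $L^p(0,T;L^2(\omega))\times L^q(0,T;L^2(\omega))$ to get $|\langle y_T^0,z\rangle|\le \|u\|_{L^p(0,T;L^2(\Omega))}\,\|\chi_\omega\varphi(\cdot;T,z)\|_{L^q(0,T;L^2(\omega))}$, divide by the denominator (nonzero for $z\neq0$ by the unique continuation used in Lemma~\ref{wanglemma4.3}), and pass to the supremum over $z$ and the infimum over $u$. For the reverse inequality, let $d$ be the supremum on the right of (\ref{N(T,y0)}); the observability estimate for (\ref{adjoint-equation}) gives $d<\infty$. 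I define $\Lambda$ on $X_{T,q}$ by $\Lambda(\chi_\omega\varphi(\cdot;T,z))=-\langle y_T^0,z\rangle$, which is well defined because $\chi_\omega\varphi(\cdot;T,z)=0$ forces $z=0$ by unique continuation, and satisfies $\|\Lambda\|=d$. Extending $\Lambda$ by Hahn--Banach and representing it by some $\hat v\in L^p(0,T;L^2(\omega))$ via the Riesz representation theorem with $\|\hat v\|_{L^p}=d$, then extending $\hat v$ by zero to $\widetilde v\in L^p(0,T;L^2(\Omega))$, one checks by (\ref{jiaxu2.1}) that $\langle y(T;0,\widetilde v),z\rangle=-\langle y_T^0,z\rangle$ for all $z$, i.e. $\widetilde v$ is admissible; hence $N_p(T,y_0)\le\|\widetilde v\|_{L^p}=d$.

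The step I expect to require the most care is this reverse inequality: one must confirm that the Hahn--Banach/Riesz representative lands in the concrete space $L^p(0,T;L^2(\omega))$ rather than merely in the abstract dual of $L^q(0,T;L^2(\omega))$, which is precisely why the admissible regime is $1<p\le\infty$, i.e. $1\le q<\infty$. The borderline case $p=\infty$, $q=1$ needs the identification $\big(L^1(0,T;L^2(\omega))\big)^*=L^\infty(0,T;L^2(\omega))$, valid because $L^2(\omega)$ is a separable Hilbert space, and the finiteness of $d$ rests on observability; both are available from results invoked earlier in the paper.
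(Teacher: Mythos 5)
Your proposal is correct and follows essentially the same route as the paper: reduce $N_p(T,y_0)$ to the $A_{T,p}$-norm of $\mp y(T;y_0,0)$ via null controllability, then invoke the isometry (\ref{attain-8}) and the density of $X_{T,q}$ in $Y_{T,q}$ to evaluate the dual norm as the stated supremum. Your ``unpacked'' two-sided estimate merely re-derives the content of (\ref{attain-8}) with the same H\"older/Hahn--Banach/Riesz machinery already used in the proof of part $(i)$ of Theorem~\ref{iso}, so it is not a genuinely different argument.
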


\begin{proof}
Let $y_0\in L^2(\Omega)\setminus\{0\}$ and $T>0$. Write
$y_T\triangleq -y(T; y_0,0)$. From the $L^\infty$-null
controllability (see \cite{FZ} or \cite{PWCZ}), it follows that
$y_T\in A_{T,p}$.  Clearly, $y(T;y_0,u)=0$ if and only if
$y(T;0,u)=y_T$. These, along with  (\ref{NP}) and (\ref{jiaxu1.6}),
yields that
 \begin{eqnarray}\label{wang3.2}
 N_p(T,y_0) &=& \inf\big\{\|u\|_{L^p(0,T; L^2(\Omega))}\;|\; y(T; y_0, u)=0  \big\} \nonumber\\
&=& \inf\big\{\|u\|_{L^p(0,T; L^2(\Omega))}\;|\; y(T; 0, u)=y_T  \big\} \nonumber\\
 &=& \|y_T\|_{A_{T,p}}.
\end{eqnarray}
Let $\hat u\in L^p(0,T; L^2(\Omega))$ be such that $y(T; 0,\hat
u)=y_T$. By (\ref{attain-1}) and (\ref{jiaxu2.1}), it follows that
$$
\mathcal{F}_{y_T,q}(\chi_\omega\varphi(\cdot;T,z))=\langle y_T,
z\rangle\;\;\mbox{for all}\;\; z\in L^2(\Omega).
$$
This, combined with (\ref{WANG1.6}) and (\ref{wang1.8}), yields that
\begin{equation}\label{wang3.3}
\|\mathcal{F}_{y_T,q}\|_{Y^*_{T,q}}=\displaystyle\sup_{z\in
L^2(\Omega)\backslash \{0\}} \dfrac{\langle y(T; y_0,0),z
\rangle}{\|\chi_\omega\varphi(\cdot;T,z)\|_{L^q(0,T;L^2(\Omega))}}.
\end{equation}
By (\ref{wang3.2}), (\ref{attain-8})  and (\ref{wang3.3}), we are
led to (\ref{N(T,y0)}). This completes the proof.

\end{proof}

 The studies on $N_p(T, y_0)$ are closely related to  the variational problem $(JP)_{y_0}^{T,q}$:
\begin{equation}\label{J-y0}
 V_q(T,y_0)\triangleq \displaystyle\inf_{\chi_\omega\varphi\in
 Y_{T,q}}J_{y_0}^{T,q}(\chi_\omega\varphi)\triangleq\frac{1}{2} \Big(
\|\chi_\omega\varphi\|_{L^q(0,T;L^2(\Omega))}\Big)^2
 + \langle y_0,\varphi(0) \rangle.
\end{equation}
By $(i)$ of Lemma~\ref{wanglemma4.3}, $J_{y_0}^{T,q}$ is
well-defined over $ Y_{T,q}$.

\begin{Lemma}\label{proposition-V-T-y0}
 Let   $p\in(1,\infty]$ and $q$ be the conjugate exponent of $p$. Then
 \begin{equation}\label{V-T-y0-N-T-y0}
V_q(T,y_0)=-\frac{1}{2}N_p(T,y_0)^2\;\;\mbox{for all}\;\;
T>0\;\;\mbox{and}\;\; y_0\in L^2(\Omega)\setminus\{0\}.
\end{equation}
\end{Lemma}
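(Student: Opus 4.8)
The plan is to reduce the variational problem $(JP)_{y_0}^{T,q}$ in (\ref{J-y0}) to the variational problem $(JP)_{y_T,q}$ of (\ref{attain-17}) studied in Section~2, with $y_T\triangleq -y(T;y_0,0)\in A_{T,p}$ (the membership following from the $L^\infty$-null controllability exactly as in the proof of Lemma~\ref{NP-YT}), and then to evaluate the resulting infimum by an elementary duality computation. Note first that since $\chi_\omega\varphi$ vanishes off $\omega$, the two norms $\|\chi_\omega\varphi\|_{L^q(0,T;L^2(\Omega))}$ and $\|\chi_\omega\varphi\|_{L^q(0,T;L^2(\omega))}$ coincide, so (\ref{J-y0}) and (\ref{attain-17}) use the same quadratic term.

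The key step is to establish the identity
\[
 \langle y_0,\varphi(0)\rangle = -\mathcal F_{y_T,q}(\chi_\omega\varphi)\quad\mbox{for every }\chi_\omega\varphi\in Y_{T,q}.
\]
For the generating elements $\chi_\omega\varphi(\cdot;T,z)\in X_{T,q}$ this follows from the state--adjoint duality: testing (\ref{originalequation}) (with $u=0$) against $\varphi(\cdot;T,z)$ and using (\ref{adjoint-equation}) shows that $t\mapsto\langle y(t;y_0,0),\varphi(t;T,z)\rangle$ is constant on $[0,T]$, whence $\langle y_0,\varphi(0;T,z)\rangle=\langle y(T;y_0,0),z\rangle=-\langle y_T,z\rangle$; on the other hand, combining (\ref{attain-1}) with (\ref{jiaxu2.1}) gives $\mathcal F_{y_T,q}(\chi_\omega\varphi(\cdot;T,z))=\langle y_T,z\rangle$, which yields the identity on $X_{T,q}$. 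To pass to the completion $Y_{T,q}$, I would invoke Lemma~\ref{wanglemma4.3}: if $\chi_\omega\varphi=\lim_n\chi_\omega\varphi(\cdot;T,z_n)$ in $L^q(0,T;L^2(\omega))$, then the representative $\varphi$ is the limit of $\varphi(\cdot;T,z_n)$ in $C([0,T_k];L^2(\Omega))$ for each $k$, so in particular $\varphi(0;T,z_n)\to\varphi(0)$ in $L^2(\Omega)$; since $\mathcal F_{y_T,q}$ is continuous on $Y_{T,q}$, letting $n\to\infty$ extends the identity to all of $Y_{T,q}$. Comparing (\ref{J-y0}) with (\ref{attain-17}) then gives $J_{y_0}^{T,q}(\chi_\omega\varphi)=J_{y_T,q}(\chi_\omega\varphi)$ for every $\chi_\omega\varphi\in Y_{T,q}$, and therefore $V_q(T,y_0)=\inf_{\xi\in Y_{T,q}}J_{y_T,q}(\xi)$.

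It then remains to compute this infimum, for which I would use the elementary fact that, for any bounded linear functional $\mathcal F$ on a normed space $Y$,
\[
 \inf_{\xi\in Y}\Big(\tfrac12\|\xi\|_Y^2-\mathcal F(\xi)\Big)=-\tfrac12\|\mathcal F\|_{Y^*}^2 .
\]
The lower bound follows from $\mathcal F(\xi)\le\|\mathcal F\|_{Y^*}\|\xi\|_Y$ by completing the square, $\tfrac12\|\xi\|_Y^2-\mathcal F(\xi)\ge\tfrac12\big(\|\xi\|_Y-\|\mathcal F\|_{Y^*}\big)^2-\tfrac12\|\mathcal F\|_{Y^*}^2$; the matching upper bound follows by testing along a sequence $\xi_n$ with $\|\xi_n\|_Y=1$ and $\mathcal F(\xi_n)\to\|\mathcal F\|_{Y^*}$, rescaled by $\|\mathcal F\|_{Y^*}$. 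Applied with $Y=Y_{T,q}$ and $\mathcal F=\mathcal F_{y_T,q}$ this gives $V_q(T,y_0)=-\tfrac12\|\mathcal F_{y_T,q}\|_{Y^*_{T,q}}^2$. Finally, the proof of Lemma~\ref{NP-YT} (via (\ref{wang3.2}), (\ref{attain-8}) and (\ref{wang3.3})) identifies $\|\mathcal F_{y_T,q}\|_{Y^*_{T,q}}=\|y_T\|_{A_{T,p}}=N_p(T,y_0)$, which yields (\ref{V-T-y0-N-T-y0}); the argument is uniform in $p\in(1,\infty]$ since the normed-space identity above requires no reflexivity or attainment.

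The step I expect to be the main obstacle is the limit passage in the key identity, namely making sense of $\langle y_0,\varphi(0)\rangle$ for a general element of the completion $Y_{T,q}$ and showing that this a priori singular-looking initial-trace term is in fact the continuous functional $-\mathcal F_{y_T,q}$. This is precisely where Lemma~\ref{wanglemma4.3} is indispensable: it guarantees that each $\xi\in Y_{T,q}$ possesses a genuine solution representative $\varphi\in C([0,T);L^2(\Omega))$ with a well-defined trace $\varphi(0)$, and that the approximating sequence converges in $C([0,T_k];L^2(\Omega))$, so that the trace passes to the limit. Everything else is routine duality.
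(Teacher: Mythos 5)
Your proof is correct, but it is organized quite differently from the paper's. The paper works entirely on the dense generating set $X_{T,q}$: it completes the square in $J^{T,q}_{y_0}(\chi_\omega\varphi(\cdot;T,z))$ pointwise in $z$ (its (\ref{proof-V-T-y0-N-T-y0-geq-1})), bounds the resulting Rayleigh-type quotient by $N_p(T,y_0)$ via the sup formula of Lemma~\ref{NP-YT}, justifies $V_q(T,y_0)=\inf_{z\neq 0}J^{T,q}_{y_0}(\chi_\omega\varphi(\cdot;T,z))$ by a density argument plus the observation that $0$ is not a minimizer, and then proves the reverse inequality separately by scaling a near-optimal $z_\varepsilon$ by $\lambda\geq 0$ and minimizing over $\lambda$. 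You instead first prove the identity $J^{T,q}_{y_0}=J_{y_T,q}$ on all of $Y_{T,q}$ with $y_T=-y(T;y_0,0)$ --- which is precisely the content of the paper's later Lemma~\ref{JIAlemma3.3}$(i)$, and your extension from $X_{T,q}$ to the completion via Lemma~\ref{wanglemma4.3} matches how the paper handles the trace term there --- and then apply the abstract normed-space identity $\inf_\xi\big(\tfrac12\|\xi\|^2-\mathcal F(\xi)\big)=-\tfrac12\|\mathcal F\|_{Y^*}^2$ together with $\|\mathcal F_{y_T,q}\|_{Y^*_{T,q}}=\|y_T\|_{A_{T,p}}=N_p(T,y_0)$ from (\ref{attain-8}) and (\ref{wang3.2}). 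Your route buys a cleaner argument: both inequalities come for free from the completing-the-square at the level of the normed space, you never need the ``$0$ is not a minimizer'' step (since including $\xi=0$ cannot raise the infimum above $-\tfrac12\|\mathcal F\|^2\leq 0$), and the $\varepsilon$--$\lambda$ scaling of the paper's upper bound is replaced by the standard norming sequence for the dual norm. The paper's route, by contrast, keeps everything expressed through the explicit quotient over $z\in L^2(\Omega)\setminus\{0\}$, which it reuses in later estimates. Both proofs rest on the same two ingredients (the state--adjoint duality and the norm identification of Theorem~\ref{iso}$(i)$), and both are uniform in $p\in(1,\infty]$, so I see no gap in your version.
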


\begin{proof}
We first prove that
\begin{equation}\label{proof-V-T-y0-N-T-y0-geq}
V_q(T,y_0)\geq-\frac{1}{2}N_p(T,y_0)^2\;\;\mbox{for all}\;\;
T>0\;\;\mbox{and}\;\; y_0\in L^2(\Omega)\setminus\{0\}.
\end{equation}
From the unique continuation estimate of heat equations (see, for
instance, \cite{PWCZ}, \cite{PW}), it follows that
$\chi_\omega\varphi(t;T,z)\neq 0$, when  $z\in
L^2(\Omega)\backslash\{0\}$ and  $t\in [0,T)$. This, along with
(\ref{J-y0}), indicates  that
\begin{eqnarray}\label{proof-V-T-y0-N-T-y0-geq-1}
 & &J^{T,q}_{y_0}(\chi_\omega\varphi(\cdot;T,z))\nonumber\\
&=&\frac{1}{2}\Big[\|\chi_\omega
\varphi(\cdot;T,z)\|_{L^q(0,T;L^2(\Omega))}+\frac{\langle
y_0,\varphi(0;T,z)\rangle} {\|\chi_\omega
\varphi(\cdot;T,z)\|_{L^q(0,T;L^2(\Omega))}}\Big]^2\nonumber\\
&&-\frac{1}{2} \Big[\frac{\langle
y_0,\varphi(0;T,z)\rangle}{\|\chi_\omega
\varphi(\cdot;T,z)\|_{L^q(0,T;L^2(\Omega))}}\Big]^2\nonumber\\
&\geq& -\frac{1}{2} \Big[\frac{\langle y(T; y_0,0),
z\rangle}{\|\chi_\omega
\varphi(\cdot;T,z)\|_{L^q(0,T;L^2(\Omega))}}\Big]^2~\mbox{ for each }\,z\in
L^2(\Omega)\setminus\{0\}.
\end{eqnarray}
Meanwhile, it follows from Lemma \ref{NP-YT} that
\begin{equation}\label{proof-V-T-y0-N-T-y0-geq-2}
N_p(T,y_0)\geq \frac{\langle y(T; y_0,0), z\rangle}{\|\chi_\omega
\varphi(\cdot; T,z)\|_{L^q(0,T;L^2(\Omega))}} ~\textrm{ for each
}z\in L^2(\Omega)\backslash\{0\}
\end{equation}
and
\begin{eqnarray}\label{proof-V-T-y0-N-T-y0-geq-3}
-N_p(T,y_0)&=&\inf_{z\in
L^2(\Omega)\backslash\{0\}}\Big[\frac{\langle y(T; y_0,0),
-z\rangle}{\|\chi_\omega \varphi(\cdot; T, -z)
\|_{L^q(0,T;L^2(\Omega))}}\Big]\nonumber\\
&=&\inf_{z\in L^2(\Omega)\backslash\{0\}}\Big[\frac{\langle y(T;
y_0,0), z\rangle}{\|\chi_\omega \varphi(\cdot;
T,z)\|_{L^q(0,T;L^2(\Omega))}}\Big].
\end{eqnarray}
By (\ref{proof-V-T-y0-N-T-y0-geq-3}), we find that
\begin{equation}\label{proof-V-T-y0-N-T-y0-geq-4}
-N_p(T,y_0)\leq \frac{\langle y(T; y_0,0), z\rangle}{\|\chi_\omega
\varphi(\cdot; T,z)\|_{L^q(0,T;L^2(\Omega))}}\;  \textrm{ for all
}z\in L^2(\Omega)\backslash\{0\}.
\end{equation}
From (\ref{proof-V-T-y0-N-T-y0-geq-2}) and (\ref{proof-V-T-y0-N-T-y0-geq-4}), it follows that
\begin{equation*}
\Big|\frac{\langle y(T; y_0,0), z\rangle}{\|\chi_\omega
\varphi(\cdot; T,z)\|_{L^q(0,T;L^2(\Omega))}} \Big|\leq
N_p(T,y_0)\,\textrm{ for all }z\in L^2(\Omega)\backslash\{0\}.
\end{equation*}
Hence,
\begin{equation}\label{proof-V-T-y0-N-T-y0-geq-5}
\sup_{z\in L^2(\Omega)\backslash\{0\}}\Big\{\Big[\frac{\langle y(T;
y_0,0), z\rangle}{\|\chi_\omega \varphi(\cdot;
T,z)\|_{L^q(0,T;L^2(\Omega))}} \Big]^2\Big\}\leq N_p(T,y_0)^2.
\end{equation}

\noindent From (\ref{proof-V-T-y0-N-T-y0-geq-1}) and
(\ref{proof-V-T-y0-N-T-y0-geq-5}), one can easily check that
\begin{eqnarray}\label{wangyuan3.12}
\inf_{z\in
L^2(\Omega)\backslash\{0\}}J^{T,q}_{y_0}(\chi_\omega\varphi(\cdot;T,z))\geq
-\frac{1}{2}N_p(T,y_0)^2.
\end{eqnarray}
 By the same method used to prove the part $(i)$ of
Lemma~\ref{norm-Lp}, we can easily check that $0$ is not the
minimizer of $J_{y_0}^{T,q}$. This, along with
(\ref{J-y0}) and (\ref{wang1.8}), yields that
\begin{equation}\label{V-T-y0}
 V_q(T,y_0)=                 \displaystyle\inf_{z\in
L^2(\Omega)\setminus\{0\}}J^{T,q}_{y_0}(\chi_\omega\varphi(\cdot;T,z))
\;\;\mbox{for all}\;\; y_0\in
L^2(\Omega)\setminus\{0\}\;\;\mbox{and}\;\; T>0.
\end{equation}
From (\ref{wangyuan3.12}) and (\ref{V-T-y0}), we are led to
(\ref{proof-V-T-y0-N-T-y0-geq}).

We next show that
\begin{equation}\label{proof-V-T-y0-N-T-y0-leq}
V_q(T,y_0)\leq-\frac{1}{2}N_p(T,y_0)^2.
\end{equation}

\noindent Clearly, $N_p(T,y_0)>0$ since $y_0\neq 0$. By (\ref{proof-V-T-y0-N-T-y0-geq-3}), given $\varepsilon\in (0,N_p(T,y_0))$, there is a $z_\varepsilon
\in L^2(\Omega)\backslash\{0\}$ such that
\begin{equation}\label{proof-V-T-y0-N-T-y0-leq-1}
\frac{\langle y_0,\varphi(0;T,z_\varepsilon)\rangle}{\|\chi_\omega \varphi(\cdot;T,z_\varepsilon)\|_{L^q(0,T;L^2(\Omega))}}\leq -N_p(T,y_0)+\varepsilon.
\end{equation}
Then,  it follows from (\ref{J-y0}) and
(\ref{proof-V-T-y0-N-T-y0-leq-1}) that for each $\lambda\geq 0$
\begin{eqnarray*}
& &J^{T,q}_{y_0}(\chi_\omega\varphi(\cdot;T,\lambda z_\varepsilon)) \\
&\leq&\frac{1}{2}\left[\lambda\|\chi_\omega\varphi(\cdot;T,z_\varepsilon)\|_{L^q(0,T;L^2(\Omega))}-(N_p(T,y_0)
-\varepsilon)\right]^2-\frac{1}{2}(N_p(T,y_0)-\varepsilon)^2.
\end{eqnarray*}
By taking the infimum  for $\lambda\in \mathbb{R}^+$ on the both
sides of the above inequality, we find that
$$
\inf_{\lambda\in \mathbb
R^+}J^{T,q}_{y_0}(\chi_\omega\varphi(\cdot;T,\lambda z_\varepsilon))
\leq -\frac{1}{2}(N_p(T,y_0) -\varepsilon)^2\;\;\mbox{for each}\;\;
\varepsilon\in(0,N_p(T,y_0)),
$$
which, together with (\ref{V-T-y0}), yields that
$$
V_q(T,y_0)\leq -\frac{1}{2}(N_p(T,y_0) -\varepsilon)^2\;\;\mbox{for
each}\;\; \varepsilon\in(0,N_p(T,y_0)).
$$
Sending $\varepsilon\to 0$ in the above inequality leads to
(\ref{proof-V-T-y0-N-T-y0-leq}).

Finally, (\ref{V-T-y0-N-T-y0})  follows from
(\ref{proof-V-T-y0-N-T-y0-geq}) and (\ref{proof-V-T-y0-N-T-y0-leq}).
This completes the proof.
\end{proof}

\begin{Lemma}\label{JIAlemma3.3}
Let $p\in (1, \infty]$ and $q$ be the conjugate exponent of $p$. Let
$T>0$ and $y_0\in L^2(\Omega)\setminus\{0\}$.  Write $y_T\triangleq -
y(T; y_0,0)$. Then $(i)$ $J_{y_T,q}(\cdot) =J_{y_0}^{T,q}(\cdot)$
over $Y_{T,q}$; $(ii)$ Problems $(NP)_{y_T,p}$  and
$(NP)_{y_0}^{T,p}$ have the same optimal controls. (Here,
$J_{y_T,q}$ and $(NP)_{y_T,p}$ are defined by (\ref{attain-17}) and
(\ref{JIA2.21}) respectively.)
\end{Lemma}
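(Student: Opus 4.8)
The plan is to reduce part $(i)$ to a single duality identity between the free state and the adjoint state, to establish that identity by a Green-type computation, and then to propagate the resulting equality of functionals from the dense subspace $X_{T,q}$ to all of $Y_{T,q}$; part $(ii)$ will then follow from nothing more than the superposition principle. First I would prove the duality identity $\langle y(T;y_0,0),z\rangle=\langle y_0,\varphi(0;T,z)\rangle$ for every $z\in L^2(\Omega)$. To this end, fix $z$ and differentiate $t\mapsto\langle y(t;y_0,0),\varphi(t;T,z)\rangle$: since $y(\cdot;y_0,0)$ solves (\ref{originalequation}) with $u=0$ and $\varphi(\cdot;T,z)$ solves (\ref{adjoint-equation}), the self-adjointness of $-\Delta$ under the homogeneous Dirichlet boundary condition and the symmetry of multiplication by $a$ make the derivative vanish identically; integrating from $0$ to $T$ gives the identity.

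With this identity available, I would evaluate both functionals on $X_{T,q}$. Combining (\ref{attain-1}) with (\ref{jiaxu2.1}) gives $\mathcal F_{y_T,q}(\chi_\omega\varphi(\cdot;T,z))=\langle y_T,z\rangle$, and since $y_T=-y(T;y_0,0)$ the duality identity turns this into $\mathcal F_{y_T,q}(\chi_\omega\varphi(\cdot;T,z))=-\langle y_0,\varphi(0;T,z)\rangle$. The quadratic terms of (\ref{attain-17}) and (\ref{J-y0}) coincide because $\chi_\omega\varphi$ is supported in $\omega$, so the $L^q(0,T;L^2(\Omega))$- and $L^q(0,T;L^2(\omega))$-norms of $\chi_\omega\varphi$ agree; hence $J_{y_T,q}=J_{y_0}^{T,q}$ on $X_{T,q}$. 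To pass to all of $Y_{T,q}$, I would invoke Lemma~\ref{wanglemma4.3}$(i)$: for $\xi\in Y_{T,q}$ with representative $\varphi$ there is a sequence $\{z_n\}\subset L^2(\Omega)$ with $\chi_\omega\varphi(\cdot;T,z_n)\to\xi$ in $L^q(0,T;L^2(\omega))$ and, by the very construction in that lemma, $\varphi(\cdot;T,z_n)\to\varphi$ in $C([0,T_k];L^2(\Omega))$ for each $T_k<T$, so in particular $\varphi(0;T,z_n)\to\varphi(0)$ in $L^2(\Omega)$. Passing to the limit in the identity just proved on $X_{T,q}$ --- using continuity of the norm, continuity of $\mathcal F_{y_T,q}\in Y_{T,q}^*$, and the convergence $\langle y_0,\varphi(0;T,z_n)\rangle\to\langle y_0,\varphi(0)\rangle$ --- yields $J_{y_T,q}(\xi)=J_{y_0}^{T,q}(\xi)$, which is $(i)$.

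For part $(ii)$ I would argue purely by linearity. Since $y(T;y_0,u)=y(T;y_0,0)+y(T;0,u)$, the constraint $y(T;y_0,u)=0$ defining $(NP)_{y_0}^{T,p}$ is equivalent to $y(T;0,u)=-y(T;y_0,0)=y_T$, the constraint defining $(NP)_{y_T,p}$; this is precisely the reduction already used in (\ref{wang3.2}). As the two problems carry the identical cost $\|u\|_{L^p(0,T;L^2(\Omega))}$, their admissible sets coincide and therefore so do their optimal controls. The computations throughout are routine, and I expect the only delicate point to be the limit passage in $(i)$: one must know that the adjoint representatives converge at the left endpoint, $\varphi(0;T,z_n)\to\varphi(0)$ in $L^2(\Omega)$, which is exactly the interior/observability regularity packaged in Lemma~\ref{wanglemma4.3} and is what makes $J_{y_0}^{T,q}$ continuous on $Y_{T,q}$.
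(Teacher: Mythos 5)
Your proposal is correct and follows essentially the same route as the paper: the paper's proof of $(i)$ also rests on the identity $\mathcal F_{y_T,q}(\chi_\omega\varphi(\cdot;T,z))=\langle y_T,z\rangle=-\langle y_0,\varphi(0;T,z)\rangle$ on $X_{T,q}$ (obtained from (\ref{attain-1}) and (\ref{jiaxu2.1})) followed by the density/continuity extension to $Y_{T,q}$ via Lemma~\ref{wanglemma4.3}, and it dismisses $(ii)$ as trivial, exactly the superposition argument you give. You merely spell out the steps the paper labels as easy, including the duality identity and the endpoint convergence $\varphi(0;T,z_n)\to\varphi(0)$.
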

\begin{proof}
$(i)$ By the $L^\infty$-null controllability (see, for instance,
\cite {FZ} or \cite{PWCZ}), one can show that $y_T\in A_{T,q}$.
Thus, $y_T=y(T;0,\widehat u)$ for some $\widehat u\in
L^p(0,T;L^2(\Omega))$. This, together with  (\ref{attain-1}) and
(\ref{jiaxu2.1}), indicates that for each $z\in L^2(\Omega)$,
\begin{eqnarray}\label{J-q-1}
 \mathcal F_{y_T,q} (\chi_\omega\varphi(\cdot;T,z))= \int_0^T\langle
 \hat u(t), \chi_\omega\varphi(t; T,z)\rangle \;\mathrm{d}t
=\langle y_T,z\rangle= -\langle y_0,\varphi(0;T,z) \rangle.
\end{eqnarray}
From this, as well as  the definitions of $J_{y_T,q}$ and
  $J_{y_0}^{T,q}$ (see
(\ref{attain-17}) and (\ref{J-y0}) respectively),
Lemma~\ref{wanglemma4.3}, (\ref{WANGHUANG2.2}) and (\ref{wang1.8}), one can easily
get that $J_{y_T,q}(\cdot) =J_{y_0}^{T,q}(\cdot)$ over $Y_{T,q}$.

$(ii)$ The proof is trivial. This completes the proof.

\end{proof}

\begin{Lemma}\label{lemma-NP-control}
 Let $p\in (1, \infty]$. Let  $T>0$ and $y_0\in L^2(\Omega)\setminus\{0\}$.
Then    $(i)$  $(NP)_{y_0}^{T,p}$ holds the bang-bang property;
$(ii)$
  $(NP)_{y_0}^{T,p}$ has a unique optimal
control.
\end{Lemma}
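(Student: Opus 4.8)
The plan is to prove Lemma~\ref{lemma-NP-control} by reducing the problem $(NP)_{y_0}^{T,p}$ to the problem $(NP)_{y_T,p}$ via Lemma~\ref{JIAlemma3.3}, and then invoking the explicit characterization of optimal controls already established in Lemma~\ref{norm-Lp}. First I would set $y_T\triangleq -y(T;y_0,0)$ and note, by the $L^\infty$-null controllability, that $y_T\in A_{T,p}\setminus\{0\}$ (it is nonzero precisely because $y_0\neq 0$ and the forward flow starting from $y_0$ does not reach $0$ without control in finite time, or more directly because $N_p(T,y_0)>0$). By part $(ii)$ of Lemma~\ref{JIAlemma3.3}, the two problems $(NP)_{y_0}^{T,p}$ and $(NP)_{y_T,p}$ share exactly the same optimal controls, so every assertion about $(NP)_{y_0}^{T,p}$ can be read off from the corresponding assertion about $(NP)_{y_T,p}$.

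For part $(ii)$, the uniqueness of the optimal control, the reduction is immediate: Lemma~\ref{norm-Lp} already asserts that $(NP)_{y_T,p}$ has a \emph{unique} optimal control $\widehat u_{y_T,p}$ given by the explicit formula (\ref{norm-Lp-u}), and by the identification of optimal controls this is also the unique optimal control of $(NP)_{y_0}^{T,p}$. For part $(i)$, the bang-bang property, I would again use the explicit formula (\ref{norm-Lp-u}) for the optimal control together with the nonvanishing result from Lemma~\ref{wanglemma4.3}. Concretely, the optimal control is $\widehat u_{y_0,p}(t)=\|\chi_\omega\widehat\varphi\|^{2-q}_{L^q(0,T;L^2(\omega))}\cdot\|\chi_\omega\widehat\varphi(t)\|^{q-2}\cdot\chi_\omega\widehat\varphi(t)$, where $\chi_\omega\widehat\varphi\in Y_{T,q}\setminus\{0\}$ is the minimizer of $J_{y_T,q}=J_{y_0}^{T,q}$. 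By part $(ii)$ of Lemma~\ref{wanglemma4.3}, we have $\|\chi_\omega\widehat\varphi(t)\|_\omega\neq 0$ for every $t\in[0,T)$, so in the range $1<p<\infty$ the pointwise norm $\|\chi_\omega\widehat u_{y_0,p}(t)\|=\|\chi_\omega\widehat\varphi\|^{2-q}_{L^q(0,T;L^2(\omega))}\|\chi_\omega\widehat\varphi(t)\|^{q-1}$ is strictly positive for a.e.\ $t$, which is exactly the bang-bang condition of Definition~\ref{definition1.2}. The normalization identity (\ref{u-psi}) gives $\|\chi_\omega\widehat u_{y_0,p}\|_{L^p(0,T;L^2(\omega))}=\|\chi_\omega\widehat\varphi\|_{L^q(0,T;L^2(\omega))}=N_p(T,y_0)$, completing that part of the definition.

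The main obstacle is the endpoint case $p=\infty$ (equivalently $q=1$), which is explicitly flagged in the Remark following Lemma~\ref{wGanglemma2.3} as a case where the authors do not have the analogue of Lemma~\ref{norm-Lp}. For $1<p<\infty$ the argument above is essentially bookkeeping, but for $p=\infty$ the space $L^q(0,T;L^2(\omega))=L^1(0,T;L^2(\omega))$ is not reflexive and $J_{y_0}^{T,q}$ need not admit a minimizer in $Y_{T,1}$, so the clean explicit-control formula breaks down. I expect the proof therefore either to restrict the genuine content of part $(i)$ and $(ii)$ to the range $1<p<\infty$, or to handle $p=\infty$ by a separate argument: for instance, approximating the $L^\infty$ norm by $L^p$ norms as $p\to\infty$, or establishing the bang-bang property directly from a bathtub-type characterization of the $L^\infty$-optimal control together with the unique continuation estimate of \cite{PWCZ} to rule out a vanishing set of positive measure. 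The uniqueness for $p=\infty$ would need the strict convexity substitute coming from the geometry of the constraint set rather than uniform convexity of $L^\infty$, which is the delicate step to watch.
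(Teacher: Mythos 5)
Your argument for $1<p<\infty$ is exactly the paper's: set $y_T=-y(T;y_0,0)$, note $y_T\in A_{T,p}\setminus\{0\}$ (the paper justifies nonvanishing by backward uniqueness, which is cleaner than your ``$N_p(T,y_0)>0$'' remark, since that is essentially equivalent to $y_T\neq 0$ rather than a proof of it), transfer optimal controls via part $(ii)$ of Lemma~\ref{JIAlemma3.3}, read off uniqueness and the explicit formula (\ref{norm-Lp-u}) from part $(i)$ of Lemma~\ref{norm-Lp}, and get the a.e.\ nonvanishing of $\|\chi_\omega u^*(t)\|$ from part $(ii)$ of Lemma~\ref{wanglemma4.3}. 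That part is complete and correct.

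The one genuine gap is the endpoint $p=\infty$, which the statement does cover. You correctly diagnose why the variational machinery of Lemma~\ref{norm-Lp} fails there ($L^1(0,T;L^2(\omega))$ is not reflexive and no minimizer of $J^{T,1}_{y_0}$ is available without the extra hypothesis (\ref{Y-Z})), but you then only sketch candidate strategies ($L^p\to L^\infty$ approximation, a bathtub characterization) without carrying any of them out, so your proposal does not actually prove $(i)$ and $(ii)$ for $p=\infty$. The paper does not prove this case from scratch either: it disposes of $p=\infty$ in one line by citing Theorem~3.1 of \cite{PW}, where the bang-bang property and uniqueness of the $L^\infty$ norm optimal control for time-varying potentials are established via the observability estimate from measurable sets in time. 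So the correct completion of your argument is a citation, not a new construction; as written, the $p=\infty$ case remains unproved in your proposal.
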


\begin{proof}
 When  $p=\infty$, the results in $(i)$ and $(ii)$  have been proved in \cite[Theorem
3.1]{PW}.

Suppose that   $p\in (1, \infty)$.  Since $y_0\neq 0$, it follows
from the backward uniqueness and the $L^\infty$-null controllability
of heat equations that  $0\neq y_T\triangleq -y(T;y_0,0)\in A_{T,p}$.
Then $(i)$ and $(ii)$ follow from Lemma~\ref{JIAlemma3.3},
Definition~\ref{definition1.2}, $(i)$ of Lemma~\ref{norm-Lp} and
$(ii)$ of Lemma~\ref{wanglemma4.3}.
 This completes
the proof.
\end{proof}

\begin{Lemma}\label{wanglemma4.2}
Let  $q\in (1, \infty)$. Let $T>0$ and $y_0\in
L^2(\Omega)\backslash\{0\}$. Then $(i)$ $0$ is not a minimizer of
$J^{T,q}_{y_0}$; $(ii)$ $J^{T,q}_{y_0}$ has a unique minimizer
$\chi_\omega\widehat\varphi$ in $Y_{T,q}$;  $(iii)$ it holds that
 \begin{equation}\label{JP-value-q}
  V_q(T,y_0)=-\frac{1}{2}  \|\chi_\omega\widehat\varphi\|_{L^q(0,T;L^2(\Omega))}^2,
 \end{equation}
 where $V_q(T,y_0)$ is given by (\ref{J-y0}).
\end{Lemma}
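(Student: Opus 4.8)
The plan is to reduce the whole statement to the already-established analysis of the functional $J_{y_T,q}$, where $y_T\triangleq -y(T;y_0,0)$, and then to read off the value of the infimum from the first-order optimality condition. First I would record that, since $q\in(1,\infty)$, its conjugate exponent $p$ also lies in $(1,\infty)$, so Lemma~\ref{norm-Lp} is applicable. Because $y_0\neq 0$, the backward uniqueness together with the $L^\infty$-null controllability of heat equations gives $y_T\in A_{T,p}\setminus\{0\}$; this is exactly the argument already used in Lemma~\ref{lemma-NP-control}. By part $(i)$ of Lemma~\ref{JIAlemma3.3} we have $J_{y_0}^{T,q}(\cdot)=J_{y_T,q}(\cdot)$ on all of $Y_{T,q}$, so the two functionals share the same minimizers and the same infimum $V_q(T,y_0)$.

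With this identification, parts $(i)$ and $(ii)$ are immediate. Applying part $(i)$ of Lemma~\ref{norm-Lp} to $J_{y_T,q}$ (legitimate precisely because $y_T\in A_{T,p}\setminus\{0\}$) shows that $0$ is not a minimizer and that there is a unique minimizer $\chi_\omega\widehat\varphi\in Y_{T,q}$, where $\widehat\varphi\in C([0,T);L^2(\Omega))$ solves (\ref{adjoint-equation}); transporting these conclusions back to $J_{y_0}^{T,q}$ via Lemma~\ref{JIAlemma3.3} yields $(i)$ and $(ii)$.

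For part $(iii)$ I would compute the optimal value. The cleanest route is the Euler--Lagrange relation (\ref{Euler-Lp}) satisfied by the minimizer: testing it with $\xi=\chi_\omega\widehat\varphi$ and invoking (\ref{ad-op}) and (\ref{u-psi}) gives $\mathcal F_{y_T,q}(\chi_\omega\widehat\varphi)=\|\chi_\omega\widehat\varphi\|_{L^q(0,T;L^2(\omega))}^2$. Substituting this into $J_{y_T,q}(\chi_\omega\widehat\varphi)=\tfrac12\|\chi_\omega\widehat\varphi\|^2-\mathcal F_{y_T,q}(\chi_\omega\widehat\varphi)$ produces $V_q(T,y_0)=-\tfrac12\|\chi_\omega\widehat\varphi\|_{L^q(0,T;L^2(\omega))}^2$, which is (\ref{JP-value-q}). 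An equivalent and more transparent argument uses homogeneity: since $Y_{T,q}$ is linear, the scalar function $g(\lambda)\triangleq J_{y_0}^{T,q}(\lambda\chi_\omega\widehat\varphi)=\tfrac12\lambda^2\|\chi_\omega\widehat\varphi\|^2+\lambda\langle y_0,\widehat\varphi(0)\rangle$ is minimized at $\lambda=1$, so $g'(1)=0$ forces $\langle y_0,\widehat\varphi(0)\rangle=-\|\chi_\omega\widehat\varphi\|^2$, and the claimed value drops out after substitution. As a consistency check one may instead combine Lemma~\ref{proposition-V-T-y0} with the identity $N_p(T,y_0)=\|\chi_\omega\widehat\varphi\|_{L^q(0,T;L^2(\omega))}$, which follows from (\ref{u-psi}) and part $(ii)$ of Lemma~\ref{JIAlemma3.3}.

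I do not expect any serious obstacle, since the substantive analytic work (existence, uniqueness and representation of the minimizer, and the Euler--Lagrange relation) is already carried out in Lemmas~\ref{norm-Lp}, \ref{proposition-V-T-y0} and \ref{JIAlemma3.3}. The only points demanding care are verifying $y_T\neq 0$ so that the nondegenerate case of Lemma~\ref{norm-Lp} genuinely applies, and keeping the norm subscripts consistent, namely that the $L^q(0,T;L^2(\Omega))$-norm of the support-restricted function $\chi_\omega\varphi$ coincides with its $L^q(0,T;L^2(\omega))$-norm, so that the displayed identity (\ref{JP-value-q}) matches the computation.
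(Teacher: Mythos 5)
Your proposal is correct and follows essentially the same route as the paper: reduce to $J_{y_T,q}$ with $y_T=-y(T;y_0,0)\neq 0$ via Lemma~\ref{JIAlemma3.3} and Lemma~\ref{norm-Lp} for parts $(i)$--$(ii)$, and obtain $(iii)$ by testing the Euler--Lagrange identity against the minimizer itself (your homogeneity argument $g'(1)=0$ is just this test restricted to the ray through $\chi_\omega\widehat\varphi$, and is a slightly cleaner way to get $\langle y_0,\widehat\varphi(0)\rangle=-\|\chi_\omega\widehat\varphi\|^2_{L^q(0,T;L^2(\omega))}$).
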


\begin{proof}
Let $p$ be the conjugate exponent of $q$. Write $y_T\triangleq-y(T;
y_0,0)$. Clearly,  $0\neq y_T\in A_{T,p}$. Then, according to
Lemma~\ref{JIAlemma3.3} and $(i)$ of Lemma~\ref{norm-Lp},
$J^{T,q}_{y_0}$ has a unique minimizer $\chi_\omega\widehat\varphi$
in $Y_{T,q}$. By Lemma~\ref{wanglemma4.3} and  (\ref{J-y0}), one
can easily check the following  Euler-Lagrange equation associated
with $\chi_\omega\widehat\varphi$:

\begin{equation}\label{J-q-Euler-equation}
\langle y_0,\varphi(0)\rangle+\int_0^T\langle \hat u(t),\chi_\omega
\varphi(t)\rangle\,\mathrm{d}t=0\;\;\mbox{for
all}\;\;\chi_\omega\varphi\in Y_{T,q},
\end{equation}
where
\begin{equation*}
\widehat u(t) \triangleq
\|\chi_\omega\widehat\varphi\|^{2-q}_{L^q(0,T;L^2(\Omega))} \cdot
\|\chi_\omega\widehat\varphi(t)\|_{\omega}^{q-2} \cdot
\chi_\omega\widehat\varphi(t),~t\in (0,T).
\end{equation*}
Taking $\varphi=\widehat\varphi$ in (\ref{J-q-Euler-equation}) gives
\begin{equation*}
 \langle y_0,\widehat\varphi(0)\rangle+\Big(\|\chi_\omega \widehat\varphi\|_{L^q(0,T;L^2(\Omega))} \Big)^2=0.
\end{equation*}
This, along with  (\ref{J-y0}), leads to (\ref{JP-value-q})  and
completes the proof.

\end{proof}

\begin{Remark}  Since $L^1(0,T; L^2(\omega))$ is not reflexive and its norm is not strictly
convex, the studies on  the functional $J^{T,1}_{y_0}$ is much more
complicated. In the rest of this section, we will show that the
functional $J^{T,1}_{y_0}$ is strictly convex in $Y_{T,1}$. This is
not obvious (see the last paragraph on Page 2940 in \cite{WZ}).
Unfortunately, we do not know if $J^{T,1}_{y_0}$ has a minimizer, in
general. (At least, we do not know how to prove it.) We will show
 the existence of the minimizer for this functional under the assumption  (\ref{Y-Z}).
\end{Remark}

\begin{Lemma}\label{lemma-J-T-y0-unique}
Let $T>0$ and $y_0\in L^2(\Omega)\backslash\{0\}$. Then $(i)$ The
functional $J^{T,1}_{y_0}$ is strictly convex in $Y_{T,1}$.
 Consequently, the minimizer of $J_{y_0}^{T,1}$, if exists, is
 unique; $(ii)$ Zero is not the minimizer of
 $J^{T,1}_{y_0}$.
\end{Lemma}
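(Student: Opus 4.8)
The plan is to treat the two assertions separately, with the strict convexity in $(i)$ being the substantive point.

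For $(i)$, since the map $\xi=\chi_\omega\varphi\mapsto\langle y_0,\varphi(0)\rangle$ is linear on $Y_{T,1}$ (well-defined by Lemma~\ref{wanglemma4.3}), the strict convexity of $J^{T,1}_{y_0}$ is equivalent to that of $\xi\mapsto\frac12\|\xi\|_{Y_{T,1}}^2$, and the latter reduces in turn to the \emph{strict convexity of the norm} $\|\cdot\|_{Y_{T,1}}=\|\cdot\|_{L^1(0,T;L^2(\omega))}$ on $Y_{T,1}$. First I would record the elementary fact that for any norm the functional $\frac12\|\cdot\|^2$ fails strict convexity along a segment only if both the triangle inequality is an equality and the two endpoints have equal norm. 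Thus it suffices to show: if $\xi,\eta\in Y_{T,1}\setminus\{0\}$ satisfy $\|\xi+\eta\|_{Y_{T,1}}=\|\xi\|_{Y_{T,1}}+\|\eta\|_{Y_{T,1}}$, then $\eta=c\,\xi$ for some constant $c>0$. Writing the $L^1(0,T;L^2(\omega))$-norm as $\int_0^T\|\cdot(t)\|_\omega\,\mathrm dt$, the triangle equality forces, for a.e. $t$, the Hilbert-space triangle equality $\|\xi(t)+\eta(t)\|_\omega=\|\xi(t)\|_\omega+\|\eta(t)\|_\omega$ in $L^2(\omega)$; since $\|\xi(t)\|_\omega,\|\eta(t)\|_\omega\neq0$ for every $t\in[0,T)$ by part $(ii)$ of Lemma~\ref{wanglemma4.3}, this yields nonnegative proportionality $\eta(t)=c(t)\xi(t)$ with $c(t)=\|\eta(t)\|_\omega/\|\xi(t)\|_\omega>0$.

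The core of the proof is to upgrade the factor $c(t)$ to a constant. By part $(i)$ of Lemma~\ref{wanglemma4.3} write $\xi=\chi_\omega\varphi$, $\eta=\chi_\omega\psi$ with $\varphi,\psi$ solving the adjoint equation \eqref{adjoint-equation}, so that $\psi=c(t)\varphi$ on $\omega\times(0,T)$. On the open set $\omega$ the factor $c(t)$ is independent of $x$, hence $\Delta\psi=c\Delta\varphi$ and $a\psi=c\,a\varphi$ there; substituting into $\psi_t=-\Delta\psi+a\psi$ and $\varphi_t=-\Delta\varphi+a\varphi$ (valid for $t\in(0,T)$ by interior parabolic regularity) gives $\psi_t=c\,\varphi_t$ on $\omega$. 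Comparing this with the product rule $\partial_t(c\varphi)=c'\varphi+c\varphi_t$ yields $c'(t)\varphi(\cdot,t)=0$ on $\omega$, and since $\|\varphi(t)\|_\omega\neq0$ we conclude $c'\equiv0$, i.e. $c$ is constant. Then $\eta=c\,\xi$ already as elements of $Y_{T,1}$ (no unique continuation is needed), and the equal-norm condition forces $c=1$, so $\xi=\eta$ --- the desired contradiction. I expect the main obstacle to be making the step $\psi_t=c\varphi_t\Rightarrow c'\varphi=0$ rigorous: one must justify the time-differentiations in an absolutely-continuous (or distributional) sense, as only $\varphi,\psi\in H^1_{\mathrm{loc}}((0,T);L^2)\cap L^2_{\mathrm{loc}}((0,T);H^2)$ is available and, because $a$ is merely $L^\infty$, no time-analyticity can be invoked. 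Concretely I would test against $\zeta(x)\theta(t)$ with $\zeta\in C_c^\infty(\omega)$, deduce $(cg)'=cg'$ for $g(t)=\langle\varphi(t),\zeta\rangle_\omega$, and conclude $c'=0$ on each interval where $g\neq0$, covering $(0,T)$ using Lemma~\ref{wanglemma4.3}$(ii)$ together with the continuity of $c$. The ``consequently unique'' clause is then immediate from strict convexity.

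For $(ii)$ I would argue exactly as in the proof of part $(i)$ of Lemma~\ref{norm-Lp}. Suppose $0$ were a minimizer of $J^{T,1}_{y_0}$. Testing against $\varepsilon\,\xi$ with $\xi=\chi_\omega\varphi\in Y_{T,1}$ and using $J^{T,1}_{y_0}(0)=0$ gives $0\le\frac12\varepsilon\|\xi\|_{Y_{T,1}}^2+\langle y_0,\varphi(0)\rangle$ for all $\varepsilon>0$; letting $\varepsilon\to0^+$ and replacing $\varphi$ by $-\varphi$ yields $\langle y_0,\varphi(0)\rangle=0$ for every such $\varphi$. Taking $\varphi=\varphi(\cdot;T,z)$ and invoking the duality identity $\langle y(T;y_0,0),z\rangle=\langle y_0,\varphi(0;T,z)\rangle$ (obtained by pairing \eqref{originalequation} with $u=0$ against \eqref{adjoint-equation}), we get $y(T;y_0,0)=0$, which contradicts the backward uniqueness of the heat equation since $y_0\neq0$. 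Hence $0$ is not a minimizer.
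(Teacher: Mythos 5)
Your proof is correct, and it follows the paper's strategy up to the decisive step, where it diverges in an interesting way. Both arguments reduce failure of strict convexity of $\frac12\|\cdot\|_{L^1(0,T;L^2(\omega))}^2$ to simultaneous equality in the triangle inequality and equality of the two norms, and both then extract, for each $t\in[0,T)$, a positive proportionality factor $c(t)=\|\eta(t)\|_\omega/\|\xi(t)\|_\omega$ (continuous and well defined by Lemma~\ref{wanglemma4.3}). The paper never shows $c$ is constant: it combines the equal-norm condition with the mean value theorem for integrals to locate a single time $\hat t$ with $c(\hat t)=1$, and then applies the unique continuation estimate to $\varphi_1-\varphi_2$, which vanishes on $\omega$ at that one time, to get $\varphi_1\equiv\varphi_2$. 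You instead substitute the proportionality into the adjoint equation on $\omega\times(0,T)$ to force $c'\equiv 0$; the weak formulation you sketch (testing against $\zeta\in C_c^\infty(\omega)$, deducing $(cg)'=cg'$ for $g(t)=\langle\varphi(t),\zeta\rangle_\omega$, and covering $(0,T)$ by intervals where some such $g$ is nonvanishing, which Lemma~\ref{wanglemma4.3}$(ii)$ guarantees) is exactly what is needed to make the time-differentiation of $c$ legitimate, so your argument closes. The paper's route is shorter and sidesteps all regularity questions at the cost of one further appeal to unique continuation; yours keeps the final step entirely local to $\omega$ and yields the slightly stronger structural conclusion that triangle equality in $Y_{T,1}$ already forces exact positive proportionality of the two solutions there. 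Your part $(ii)$ coincides with the paper's, which simply refers back to the argument for part $(i)$ of Lemma~\ref{norm-Lp}; spelling out the duality identity and backward uniqueness as you do is the intended content.
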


\begin{proof}
$(i)$   By contradiction, suppose that $J^{T,1}_{y_0}$ was not
strictly convex in $Y_{T,1}$. Then there would be two distinct
$\chi_\omega\varphi_1$ and $\chi_\omega\varphi_2$ in $Y_{T,1}$ and a
$\lambda\in(0,1)$ such that
\begin{equation}\label{proof-J-T-y0-unique-con}
\Big(\int_0^T\|\chi_\omega\varphi_{\lambda}\|\,\mathrm{d}t\Big)^2=
(1-\lambda)\Big(\int_0^T\|\chi_\omega\varphi_1\|\,\mathrm{d}t\Big)^2
+\lambda\Big(\int_0^T\|\chi_\omega\varphi_2\|\,\mathrm{d}t\Big)^2,
\end{equation}
where $\varphi_{\lambda}\triangleq(1-\lambda)\varphi_1+\lambda
\varphi_2$.  We first prove that
\begin{equation}\label{proof-J-T-y0-unique-not-0}
\|\chi_\omega\varphi_1(t)\|\neq 0, \,\|\chi_\omega\varphi_2(t)\|\neq
0 \textrm{ for each }t\in[0,T).
\end{equation}
In fact, if it was not true, then we could suppose, without loss of
generality, that $\chi_\omega\varphi_1(t_0)=0$ for some $t_0\in
[0,T)$. Since both $\varphi_1$ and $\varphi_2$ solve equation
(\ref{adjoint-equation}) (see Lemma~\ref{wanglemma4.3}), it follows
by the unique continuation estimate of heat equations (see, for
instance, \cite{PWCZ}) that $\varphi_1\equiv 0$ over $[0,T]$.
Consequently,
  $\varphi_{\lambda}=\lambda \varphi_2$, which, as well as (\ref{proof-J-T-y0-unique-con}), yields
$$
\lambda\Big(\int_0^T\|\chi_\omega\varphi_2\|\,\mathrm{d}t\Big)^2
=\Big(\int_0^T\|\chi_\omega\varphi_2\|\,\mathrm{d}t\Big)^2.
$$
Because $\lambda\in (0,1)$, the above equality implies that
$\|\chi_\omega \varphi_2(\cdot)\|=0$ over $(0,T)$. This, along with
the unique continuation of heat equations, gives that
$\varphi_2\equiv 0$ over $[0,T]$, which contradicts with the
fact that $\chi_\omega\varphi_1\neq \chi_\omega\varphi_2$. Hence,
(\ref{proof-J-T-y0-unique-not-0}) holds.

Two observations are given in order: First, it is clear that
\begin{eqnarray}\label{proof-J-T-y0-unique-exp1}
 \left(\int_0^T\|\chi_\omega\varphi_{\lambda}\|\,\mathrm{d}t\right)^2
\leq
\left((1-\lambda)\int_0^T\|\chi_\omega\varphi_1\|\,\mathrm{d}t
+\lambda\int_0^T\|\chi_\omega\varphi_2\|\,\mathrm{d}t\right)^2.
\end{eqnarray}
Since  $\varphi_1, \varphi_2, \varphi_\lambda\in C([0,T);
L^2(\Omega))$ (see Lemma~\ref{wanglemma4.3})) and because
\begin{equation}\label{proof-J-T-y0-unique-exp2}
\|(1-\lambda)\chi_\omega\varphi_1(t)
+\lambda\chi_\omega\varphi_2(t)\|\leq
(1-\lambda)\|\chi_\omega\varphi_1(t)\|
+\lambda\|\chi_\omega\varphi_2(t)\|\textrm{ for each }t\in [0,T),
\end{equation}
the equality in (\ref{proof-J-T-y0-unique-exp1}) holds if and only
if  the equality in (\ref{proof-J-T-y0-unique-exp2}) holds for each
$t\in [0,T)$. On the other hand,  the equality in
(\ref{proof-J-T-y0-unique-exp2}) holds for each $t\in [0,T)$ if and
only if  for each $t\in [0,T)$, there is a $d(t)>0$ such that
\begin{equation}\label{proof-J-T-y0-unique-d}
 \chi_\omega\varphi_1(t)=d(t){\chi_{\omega}}\varphi_2(t)\textrm{ in }L^2(\Omega).
\end{equation}
Thus, the equality in (\ref{proof-J-T-y0-unique-exp1}) holds if and
only if (\ref{proof-J-T-y0-unique-d}) stands.  Second, it is obvious
that
\begin{eqnarray}\label{proof-J-T-y0-unique-exp3}
&&\Big((1-\lambda)\int_0^T\|\chi_\omega\varphi_1\|\,\mathrm{d}t
+\lambda\int_0^T\|\chi_\omega\varphi_2\|\,\mathrm{d}t\Big)^2\nonumber\\
&\leq&(1-\lambda)\Big(\int_0^T\|\chi_\omega\varphi_1\|\,\mathrm{d}t\Big)^2
+\lambda\Big(\int_0^T\|\chi_\omega\varphi_2\|\,\mathrm{d}t\Big)^2
\end{eqnarray}
and the equality in (\ref{proof-J-T-y0-unique-exp3}) holds if and
only if
\begin{equation}\label{proof-J-T-y0-unique-exp4}
\int_0^T\|\chi_\omega\varphi_1\|\,\mathrm{d}t=\int_0^T\|\chi_\omega\varphi_2\|\,\mathrm{d}t.
\end{equation}
By (\ref{proof-J-T-y0-unique-con}), we see that the equalities in
both (\ref{proof-J-T-y0-unique-exp1}) and
(\ref{proof-J-T-y0-unique-exp3}) hold respectively. Hence, we have
both (\ref{proof-J-T-y0-unique-d}) and
(\ref{proof-J-T-y0-unique-exp4}). Since
$\|\chi_\omega\varphi_2(t)\|\neq 0$ for each $t\in[0,T)$ (see
(\ref{proof-J-T-y0-unique-not-0})), we derive from
(\ref{proof-J-T-y0-unique-d}) that
$d(t)={\|\chi_\omega\varphi_1(t)\|}/{\|\chi_\omega\varphi_2(t)\|}$
for each $t\in[0,T)$. This, along with  the fact that  $\varphi_1, \varphi_2\in C([0,T); L^2(\Omega))$,
indicates that
 $d(\cdot)\in
C([0,T);\mathbb R^+)$. By making use of
(\ref{proof-J-T-y0-unique-d}) again, we find that
\begin{equation}\label{proof-J-T-y0-unique-d-int}
\int_0^Td(t)\|\chi_\omega\varphi_2(t)\|\,\mathrm{d}t
=\int_0^T\|\chi_\omega\varphi_1\|\,\mathrm{d}t.
\end{equation}
Applying the mean value theorem of integral to the left side of
(\ref{proof-J-T-y0-unique-d-int}), we get that there is a $\hat
t\in(0,T)$ such that
\begin{equation}\label{proof-J-T-y0-unique-d-hat-t}
\int_0^Td(t)\|\chi_\omega\varphi_2(t)\|\,\mathrm{d}t=d(\hat
t)\int_0^T\|\chi_\omega\varphi_2\|\,\mathrm{d}t.
\end{equation}
From (\ref{proof-J-T-y0-unique-exp4}),
(\ref{proof-J-T-y0-unique-d-int}) and
(\ref{proof-J-T-y0-unique-d-hat-t}), it follows that $d(\hat t)=1$.
This, as well as   (\ref{proof-J-T-y0-unique-d}), leads to
$\chi_\omega \varphi_1(\hat t)=\chi_\omega \varphi_2(\hat t)$ in
$L^2(\Omega)$, which, together with the unique continuation for heat
equations, yields that $\varphi_1=\varphi_2$ over
$[0,T]$. This leads to a contradiction. Hence, $J^{T,1}_{y_0}$ is
strictly convex in $Y_{T,1}$.

$(ii)$ The proof follows from the same way used to prove the part
$(i)$ of Lemma~\ref{norm-Lp}. This completes the proof.

\end{proof}

The proof of the existence  for the minimizer to
$J^{T,1}_{y_0}$ (under  the assumption  (\ref{Y-Z})), as well as
of Theorem~\ref{theorem-bang-bang}, needs the help of the following
preliminaries. Let
\begin{equation}\label{beta-t-T}
\beta(t,T)\triangleq \sup_{z\in
L^2(\Omega)\backslash\{0\}}\frac{\|\varphi(t;T,z)\|}{\|\chi_\omega\varphi(\cdot;T,z)\|_{L^1(t,T;
L^2(\Omega))}}, \,T>0,\,t\in[0,T).
\end{equation}
The term on the right hand side of (\ref{beta-t-T}) is well-defined
because of the unique continuation for heat equations. From
Proposition 3.2 in \cite{FZ}, we can derive the following estimate:
\begin{equation}\label{beta-t-T-C1}
\beta(t, T)\leq C_1(T,t),\, \textrm{ for all } T>0,\; t\in[0,T).
\end{equation}
Here,
\begin{equation}\label{C1}
C_1(T,
t)\triangleq\textrm{exp}\Big[\Big(1+\frac{1}{T-t}\Big)\widehat
C_0\Big],\, T>0,\,t\in[0,T),
\end{equation}
where $\widehat C_0>0$ depends only on $\Omega$, $\omega$ and
$\|a\|_\infty$ which is the
$L^\infty(\Omega\times\mathbb{R}^+)$-norm of $a$. The proof of
(\ref{beta-t-T-C1}) will be given in Appendix, for sake of the
completeness  of the paper.

\begin{Lemma}\label{existence-J}
 Let $T>0$ and $y_0\in L^2(\Omega\setminus\{0\}$. Suppose that (\ref{Y-Z}) holds. Then $J_{y_0}^{T,1}$
 has  a  minimizer $\chi_\omega\widehat\varphi$ in $Y_{T,1}$.
 Furthermore, it holds that
 \begin{equation}\label{JP-value}
  V_1(T,y_0)=-\frac{1}{2} \Big(\int_0^T \|\chi_\omega\widehat\varphi\| \;\mathrm{d}t \Big)^2,
 \end{equation}
 where $V_1(T,y_0)$ is given by (\ref{J-y0}).
\end{Lemma}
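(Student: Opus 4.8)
The plan is to produce the minimizer directly as the limit of a minimizing sequence, using the observability estimate (\ref{beta-t-T-C1}) together with the parabolic smoothing effect to compensate for the non-reflexivity of $L^1$; the assumption (\ref{Y-Z}) is precisely what keeps the limit inside $Y_{T,1}$. First I would record that $V_1(T,y_0)<0$ (either from Lemma~\ref{proposition-V-T-y0} with $p=\infty$, or from part $(ii)$ of Lemma~\ref{lemma-J-T-y0-unique}). Next I would upgrade (\ref{beta-t-T-C1}) from the special solutions $\varphi(\cdot;T,z)$ to arbitrary elements of $Y_{T,1}$: for each $\chi_\omega\varphi\in Y_{T,1}$, writing $\varphi\in C([0,T);L^2(\Omega))$ for the solution of (\ref{adjoint-equation}) provided by Lemma~\ref{wanglemma4.3}, one has
\[
\|\varphi(t)\|\le C_1(T,t)\,\|\chi_\omega\varphi\|_{L^1(t,T;L^2(\Omega))},\qquad t\in[0,T),
\]
which follows by approximating $\chi_\omega\varphi$ by $\chi_\omega\varphi(\cdot;T,z_n)$ in $L^1(0,T;L^2(\omega))$ and passing to the limit through the $C([0,T_k];L^2(\Omega))$ convergence already established inside the proof of Lemma~\ref{wanglemma4.3}.

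Then I would fix a minimizing sequence $\{\chi_\omega\varphi_n\}\subset Y_{T,1}$ with $J^{T,1}_{y_0}(\chi_\omega\varphi_n)\to V_1(T,y_0)$ and set $a_n\triangleq\int_0^T\|\chi_\omega\varphi_n\|\,\mathrm{d}t$. Using the estimate above at $t=0$ gives $|\langle y_0,\varphi_n(0)\rangle|\le \|y_0\|\,C_1(T,0)\,a_n$, so by (\ref{J-y0}) we have $\tfrac12 a_n^2-\|y_0\|C_1(T,0)a_n\le J^{T,1}_{y_0}(\chi_\omega\varphi_n)$, and since the right side is bounded, $\{a_n\}$ is bounded. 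Applying the estimate at a general $t$ shows $\{\varphi_n(T_k)\}$ is bounded in $L^2(\Omega)$ for each $T_k\nearrow T$; the smoothing properties of the heat equation (exactly as in Lemma~\ref{wanglemma4.3}) then yield, after a diagonal extraction, a subsequence with $\varphi_n\to\widehat\varphi$ in $C([0,T_k];L^2(\Omega))$ for every $k$, where $\widehat\varphi\in C([0,T);L^2(\Omega))$ solves (\ref{adjoint-equation}). This compactness extraction is where (\ref{Y-Z}) and (\ref{beta-t-T-C1}) are indispensable, and I expect it to be the main obstacle. By Fatou's lemma, $\int_0^T\|\chi_\omega\widehat\varphi\|\,\mathrm{d}t\le\liminf_n a_n<\infty$, so $\chi_\omega\widehat\varphi\in Z_{T,1}=Y_{T,1}$ by (\ref{Y-Z}) and (\ref{zT1}).

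To conclude that $\chi_\omega\widehat\varphi$ is a minimizer, I would combine $\varphi_n(0)\to\widehat\varphi(0)$ in $L^2(\Omega)$ with the lower semicontinuity of the $L^1$-term (Fatou again) to obtain $J^{T,1}_{y_0}(\chi_\omega\widehat\varphi)\le\liminf_n J^{T,1}_{y_0}(\chi_\omega\varphi_n)=V_1(T,y_0)$; since $\chi_\omega\widehat\varphi\in Y_{T,1}$ the reverse inequality is automatic, and uniqueness comes from Lemma~\ref{lemma-J-T-y0-unique}. For the value formula (\ref{JP-value}), I would note $\widehat\varphi\neq 0$ (as $0$ is not a minimizer) and test optimality along the ray $\lambda\mapsto J^{T,1}_{y_0}(\lambda\chi_\omega\widehat\varphi)$, a smooth quadratic in $\lambda$ minimized at $\lambda=1$ because $\lambda\chi_\omega\widehat\varphi\in Y_{T,1}$; the vanishing of its derivative at $\lambda=1$ gives $\langle y_0,\widehat\varphi(0)\rangle=-\big(\int_0^T\|\chi_\omega\widehat\varphi\|\,\mathrm{d}t\big)^2$, and substituting this into (\ref{J-y0}) yields (\ref{JP-value}).
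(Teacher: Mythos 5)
Your proposal is correct and follows essentially the same route as the paper: the direct method with the observability estimate (\ref{beta-t-T-C1}) extended to all of $Y_{T,1}$ for coercivity and for the pointwise bounds $\|\varphi_n(T_k)\|\le C(k)$, diagonal compactness in $C([0,T_k];L^2(\Omega))$, Fatou plus (\ref{Y-Z}) to keep the limit in $Y_{T,1}$, and lower semicontinuity to conclude minimality. The only difference is cosmetic: for (\ref{JP-value}) the paper writes the full Euler--Lagrange equation and tests it with $\widehat\varphi$, whereas you differentiate $\lambda\mapsto J^{T,1}_{y_0}(\lambda\chi_\omega\widehat\varphi)$ at $\lambda=1$; both yield the identity $\langle y_0,\widehat\varphi(0)\rangle=-\big(\int_0^T\|\chi_\omega\widehat\varphi\|\,\mathrm{d}t\big)^2$, and your ray argument is arguably cleaner since it sidesteps the nondifferentiability of the $L^1$-norm in general directions.
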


\begin{proof}

 We start with proving the coercivity of $J^{T,1}_{y_0}$.
By   Lemma~\ref{wanglemma4.3}, and by using  the standard density
argument,   one can easily derive from
 (\ref{beta-t-T}) and (\ref{beta-t-T-C1}) that
\begin{equation}\label{varphi-t-z-C1}
\|\varphi(t)\|\leq C_1(T,
t)\int_t^T\|\chi_\omega\varphi\|\,\mathrm{d}s\;\;\mbox{for
all}\;\; T>0, t\in[0,T)\;\;\mbox{and}\;\;\chi_\omega\varphi\in
Y_{T,1}.
\end{equation}
From (\ref{varphi-t-z-C1}), we see that
\begin{eqnarray*}
 \langle y_0,\varphi(0) \rangle
   \geq - C_1(T, 0)^2\|y_0\|^2 - \frac{1}{4} \Big(\int_0^T \|\chi_\omega\varphi\| \,\mathrm{d}t \Big)^2\;\;\mbox{for each}\;\;\chi_\omega\varphi\in Y_{T,1}.
\end{eqnarray*}
This, along with (\ref{J-y0}) and (\ref{WANG1.6}), indicates that
\begin{eqnarray*}
 J^{T,1}_{y_0}(\chi_\omega\varphi)\geq \frac{1}{4}\|\chi_\omega\varphi\|_{Y_{T,1}}^2 - C_1(T, 0)^2\|y_0\|^2\;\;\mbox{for each}\;\;\chi_\omega\varphi\in Y_{T,1},
\end{eqnarray*}
which leads to the coercivity of  $J_{y_0}^{T,1}$.

We next write $\{\chi_\omega\varphi_{n}\}$ for a minimizing sequence
of $J^{T,1}_{y_0}$. By  the coercivity of $J^{T,1}_{y_0}$, there is
a positive constant $C$ independent of $n$ such that
\begin{equation}\label{JP-1}
 \int_0^T \|\chi_\omega\varphi_n\|\;\mathrm{d}t \leq C\;\;\mbox{for all}\;\; n\in \mathbb{N}.
\end{equation}
Let $\{T_k\}\subset (0, T)$ be such that
 $T_k\nearrow T$. By
(\ref{varphi-t-z-C1}) and (\ref{JP-1}),  it holds that
\begin{eqnarray}\label{JP-2}
  \|\varphi_n(T_k)\|  \leq C_1(T, T_k) \int_{T_k}^{T}\|\chi_\omega\varphi_n\|\;\mathrm{d}t \leq CC_1(T, T_k) \triangleq C(k), \forall\;n, k\in
  \mathbb{N}
\end{eqnarray}
Let $k=2$ in (\ref{JP-2}). By properties of heat equations, there
are a $z_1\in L^2(\Omega)$ and
 a subsequence $\{\varphi_{n_l}\}$ of
$\{\varphi_n\}$ such that
\begin{equation*}
 \varphi_{n_l}(\cdot)\rightarrow \varphi(\cdot; z_1, T_1)\;\;\mbox{strongly in}\;\;C([0,T_1];L^2(\Omega)),\;\;\mbox{as}\;\;l \rightarrow \infty.
\end{equation*}
Let $k=3$ in (\ref{JP-2}). By properties of heat equations, we can
find a $z_2\in L^2(\Omega)$ and
 a subsequence $\{\varphi_{n_{l_s}}\}$
of $\{\varphi_{n_l}\}$   such that
\begin{equation*}
 \varphi_{n_{l_s}}(\cdot)\rightarrow \varphi(\cdot; z_{2}, T_{2})\;\;\mbox{strongly in}\;\;C([0,T_{2}];L^2(\Omega)),\;\;\mbox{as}\;\;s \rightarrow \infty.
\end{equation*}
Continuing this procedure with respect to $k$, and then using the
diagonal law, we find a subsequence of $\{\varphi_n\}$, still
denoted in the same way, and a sequence $\{z_k\}$ in $L^2(\Omega)$
such that for each $k\geq 2$,
\begin{equation}\label{JP-converge-k}
\varphi_{n}(\cdot)\rightarrow \varphi(\cdot; z_k,
T_k)\;\;\mbox{strongly in}\;\;C([0,T_k];L^2(\Omega)),~~\textrm{ as }
n\rightarrow \infty.
\end{equation}
From (\ref{JP-converge-k}), we see that
\begin{equation}\label{JP-converge-well}
 \varphi(t; z_k,T_k)=\varphi(t; z_{k+j}, T_{k+j})\;\;\mbox{for all}\;\; t\in [0, T_k], ~k=2,3,\dots, ~ j=1,2,\dots.
\end{equation}
Now, we define a function $\hat \varphi$ over $[0, T)$ by setting
\begin{equation}\label{JP-converge-limit}
\hat \varphi(t)=\varphi(t; z_k, T_k),\;\; t\in [0,T_k],~k=2,3,\dots.
\end{equation}
From this and (\ref{JP-converge-well}),  $\hat\varphi$ is well-defined. Then by
(\ref{JP-converge-k}) and (\ref{JP-converge-limit}), we see that
\begin{equation}\label{JP-converge-property1}
 \hat \varphi\in C([0,T);L^2(\Omega)) \mbox{ solves  Equation
 (\ref{adjoint-equation})}
\end{equation}
and
\begin{equation}\label{wanG3.41}
 \chi_\omega\varphi_{n}\rightarrow \chi_\omega\hat \varphi\;\;\mbox{strongly in}\;\;L^1(0,T_k; L^2(\omega))~~\textrm{ for each } k.
\end{equation}
From (\ref{wanG3.41}) and (\ref{JP-1}), we find that for each $k\in
\mathbb{N}$,
\begin{equation*}
 \int_0^{T_k} \|\chi_\omega \hat \varphi\| \;\textrm{d}t = \liminf_{n\rightarrow \infty} \int_0^{T_k}
 \|\chi_\omega \varphi_n\| \;\textrm{d}t \leq \liminf_{n\rightarrow \infty}
 \int_0^{T} \|\chi_\omega \varphi_n\| \;\textrm{d}t\leq C.
\end{equation*}
This implies
\begin{equation}\label{JP-converge-property2}
 \int_0^{T} \|\chi_\omega \hat\varphi\| \;\textrm{d}t  \leq \liminf_{n\rightarrow \infty}
 \int_0^{T} \|\chi_\omega \varphi_n\| \;\textrm{d}t\leq C.
\end{equation}
From (\ref{zT1}), (\ref{JP-converge-property1}) and
(\ref{JP-converge-property2}), it follows that
$\chi_\omega\hat\varphi\in Z_{T,1}$. This, along with the assumption
(\ref{Y-Z}), indicates that
\begin{equation}\label{waNg3.43}
\chi_\omega\hat \varphi\in Y_{T,1}.
\end{equation}
From (\ref{JP-converge-k}) and (\ref{JP-converge-limit}), we, in
particular, have that $\varphi_n(0)\rightarrow \hat\varphi (0)$
strongly in $L^2(\Omega)$. This, together with (\ref{J-y0})  and
(\ref{JP-converge-property2}), yields
\begin{equation}\label{JP-3}
 J^{T,1}_{y_0}(\chi_\omega \hat\varphi) \leq \displaystyle\liminf_{n\rightarrow\infty} J^{T,1}_{y_0}(\chi_\omega\varphi_n).
\end{equation}
From (\ref{JP-3}) and (\ref{waNg3.43}), we see  that
$\chi_\omega\hat\varphi$ is the minimizer of $J^{T,1}_{y_0}(\cdot)$.

Finally, we prove (\ref{JP-value}). The Euler-Lagrange equation
associated with  $\chi_\omega\hat\varphi$ reads:
\begin{equation}\label{JP-Euler-equation}
\langle y_0,\varphi(0)\rangle+\int_0^T\langle \hat u(t),\chi_\omega
\varphi(t)\rangle\,\mathrm{d}t=0,\, \chi_\omega\varphi\in Y_{T,1} ,
\end{equation}
where
\begin{equation*}
\hat u(t)=\int_0^T\|\chi_\omega\widehat \varphi\|\,\mathrm{d}s
\cdot \frac{\chi_\omega\widehat \varphi(t)}{\|\chi_\omega\widehat
\varphi(t)\|},\, t\in [0,T).
\end{equation*}
Letting  $\varphi=\widehat\varphi$ in (\ref{JP-Euler-equation}), we
get
\begin{equation*}
 \langle y_0,\widehat\varphi(0)\rangle+\Big(\int_0^T \|\chi_\omega \widehat\varphi\|\,\mathrm{d}t \Big)^2=0.
\end{equation*}
This, along with (\ref{J-y0}), leads to (\ref{JP-value}) and
completes the proof.

\end{proof}

\section{The bang-bang property  for $(TP)^{M,p}_{y_0}$}

This section is mainly devoted to the proof of Theorem
\ref{theorem-bang-bang}. Our strategy  is as follows. We first show
that $(TP)^{M,p}_{y_0}$ has the bang-bang property if and only if
$M=N_p(T, y_0)$ for some $T>0$; then prove that the function
$N_p(\cdot, y_0)$ is strictly monotonically decreasing and
continuous from $(0, \infty)$ onto $(\widehat N_p(y_0), \infty)$;
finally, through utilizing the bang-bang property of
$(NP)^{T,p}_{y_0}$ (see Lemma~\ref{lemma-NP-control}), derive the
bang-bang property for $(TP)^{M,p}_{y_0}$ for any $M>\widehat N_p(y_0)$. To show the
left continuity of $N_p(\cdot, y_0)$, we need  the assumption
(\ref{Y-Z}).

\begin{Lemma}\label{lemma-bang-bang}
 Let $y_0\in L^2(\Omega)\setminus \{0\}$. Then $(TP)_{y_0}^{M,p}$, with $M>0$,  has the bang-bang property if and only if $M=N_p(T,y_0)$ for some $T>0$.
\end{Lemma}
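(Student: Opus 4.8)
The plan is to realise $(TP)^{M,p}_{y_0}$ as a norm optimal problem at its own optimal time. Throughout I write $T^\ast\triangleq T_p(M,y_0)$ and use the elementary monotonicity of $N_p(\cdot,y_0)$: if $t_1<t_2$, any control steering $y_0$ to $0$ at $t_1$, extended by $0$ on $(t_1,t_2)$, still steers $y_0$ to $0$ at $t_2$, so $N_p(t_2,y_0)\le N_p(t_1,y_0)$. The substantive tools are the existence, uniqueness and bang-bang property of the optimal control of $(NP)^{T,p}_{y_0}$ (Lemma~\ref{lemma-NP-control}), together with the fact that $(TP)^{M,p}_{y_0}$ admits an optimal control exactly when $M>\widehat N_p(y_0)$ (Proposition~\ref{proposition-TP-M-y0}). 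The whole argument hinges on the single identity $N_p(T^\ast,y_0)=M$, after which the bang-bang property transfers from $(NP)^{T^\ast,p}_{y_0}$ to $(TP)^{M,p}_{y_0}$.

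For necessity, suppose $(TP)^{M,p}_{y_0}$ has the bang-bang property. By Remark~\ref{remarkwang1.1} an optimal control $u^\ast$ exists, with $y(T^\ast;y_0,u^\ast)=0$ and $\|u^\ast\|_{L^p(\mathbb R^+;L^2(\Omega))}\le M$. Definition~\ref{definition 1.1} forces $\|\chi_\omega u^\ast\|_{L^p(0,T^\ast;L^2(\Omega))}=M$, and since $\|\chi_\omega u^\ast\|_{L^p(0,T^\ast)}\le\|u^\ast\|_{L^p(0,T^\ast)}\le\|u^\ast\|_{L^p(\mathbb R^+)}\le M$, all these equal $M$; in particular $u^\ast|_{(0,T^\ast)}$ is admissible for $(NP)^{T^\ast,p}_{y_0}$, so $N_p(T^\ast,y_0)\le M$. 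The key step is to exclude $N_p(T^\ast,y_0)<M$: were this so, the optimal control $w^\ast$ of $(NP)^{T^\ast,p}_{y_0}$ (which exists by Lemma~\ref{lemma-NP-control}) would have $\|w^\ast\|_{L^p(0,T^\ast)}=N_p(T^\ast,y_0)<M$; extended by $0$ it lies in $\mathcal U^{M,p}$ and steers $y_0$ to $0$ at the optimal time $T^\ast$, hence is itself an optimal control of $(TP)^{M,p}_{y_0}$. But $\|\chi_\omega w^\ast\|_{L^p(0,T^\ast)}=N_p(T^\ast,y_0)<M$ (respectively $\|\chi_\omega w^\ast(t)\|=N_\infty(T^\ast,y_0)<M$ a.e. when $p=\infty$), contradicting the bang-bang property. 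Hence $M=N_p(T^\ast,y_0)$, the desired conclusion with $T=T^\ast$. I regard this reversal --- feeding the norm optimal control at $T^\ast$ back into the time optimal problem as a competitor --- as the heart of the lemma, and the reason no continuity of $N_p(\cdot,y_0)$ is needed here.

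For sufficiency, suppose $M=N_p(T_0,y_0)$ for some $T_0>0$. First I would verify $M>\widehat N_p(y_0)$: otherwise $M=\widehat N_p(y_0)=\lim_{T\to\infty}N_p(T,y_0)$, so by monotonicity $N_p(\cdot,y_0)\equiv M$ on $[T_0,\infty)$; extending the $(NP)^{T_0,p}_{y_0}$ optimal control by $0$ would then, by the uniqueness in Lemma~\ref{lemma-NP-control}, be the optimal control of $(NP)^{T_1,p}_{y_0}$ for $T_1>T_0$, yet it vanishes on $(T_0,T_1)$ and so violates the non-vanishing $\|\chi_\omega(\cdot)\|\neq 0$ a.e.\ guaranteed by the bang-bang property of $(NP)$ --- a contradiction. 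Thus $M>\widehat N_p(y_0)$, so by Proposition~\ref{proposition-TP-M-y0} an optimal control of $(TP)^{M,p}_{y_0}$ exists, $T^\ast$ is attained and $N_p(T^\ast,y_0)\le M$. Since the extended $(NP)^{T_0}$ optimal control steers $y_0$ to $0$ within budget $M$ at time $T_0$, we get $T^\ast\le T_0$, and monotonicity gives $N_p(T^\ast,y_0)\ge N_p(T_0,y_0)=M$; therefore $N_p(T^\ast,y_0)=M$. Finally I transfer the bang-bang property: any optimal control $u^\ast$ of $(TP)^{M,p}_{y_0}$ reaches $0$ at $T^\ast$ with $M=N_p(T^\ast,y_0)\le\|u^\ast\|_{L^p(0,T^\ast)}\le M$, so $u^\ast|_{(0,T^\ast)}$ is optimal for $(NP)^{T^\ast,p}_{y_0}$ and $u^\ast\equiv 0$ afterwards; by uniqueness it coincides with the unique bang-bang minimizer of $(NP)^{T^\ast,p}_{y_0}$, and the conclusions of Definition~\ref{definition1.2} translate verbatim into those of Definition~\ref{definition 1.1}. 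The main obstacle I anticipate is precisely the exclusion of the degenerate case $M=\widehat N_p(y_0)$; as indicated, it is dispatched purely through the uniqueness and non-vanishing parts of the $(NP)$ bang-bang property, so that the lemma is obtained without any continuity or strict monotonicity of $N_p(\cdot,y_0)$ (these being reserved for the later steps toward Theorem~\ref{theorem-bang-bang}).
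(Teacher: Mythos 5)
Your proof is correct, and it rests on the same two pillars as the paper's: the existence, uniqueness and bang-bang property of the norm optimal control from Lemma~\ref{lemma-NP-control}, and the extension-by-zero correspondence between $(NP)$ and $(TP)$. Your necessity argument is essentially the paper's (the paper feeds the extended $(NP)^{T_p(M,y_0),p}_{y_0}$ optimal control into $(TP)^{M,p}_{y_0}$ and reads off $\|\chi_\omega u_4\|_{L^p}=M$ directly, which is the same computation as your exclusion of $N_p(T^*,y_0)<M$). The sufficiency direction is organized differently: the paper extends an arbitrary $(TP)$ optimal control by zero to $(0,T)$, recognizes the extension as an optimal control of $(NP)^{T,p}_{y_0}$, and uses the non-vanishing part of Definition~\ref{definition1.2} to force $T_p(M,y_0)=T$, after which the transfer is immediate; you instead pin down $N_p(T^*,y_0)=M$ by a monotonicity sandwich ($T^*\le T_0$ together with $N_p(T^*,y_0)\le M$) and transfer the bang-bang property from $(NP)^{T^*,p}_{y_0}$, never needing to identify $T^*$ with $T_0$. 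Both work. Your detour through Proposition~\ref{proposition-TP-M-y0} to obtain existence of a $(TP)$ optimal control is logically sound --- that proposition depends only on Lemma~\ref{lemma-NP-control} and part $(i)$ of Lemma~\ref{lemma-N-t-lim}, not on the present lemma, so there is no circularity even though it appears later in the paper --- but it is an unnecessary complication: the extended $(NP)^{T_0,p}_{y_0}$ optimal control is already an admissible control for $(TP)^{M,p}_{y_0}$, so optimal controls exist by the standard compactness argument the paper cites, and your preliminary step ruling out $M=\widehat N_p(y_0)$ (itself correct) could be dropped entirely.
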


\begin{proof}
 First we suppose that  $M=N_p(T,y_0)$ for some $T>0$.
Let $u_1$ be the optimal
 control to  $(NP)_{y_0}^{T,p}$. (The existence of the optimal control
is ensured by  Lemma
 \ref{lemma-NP-control}.)
One can easily check that $u_1$ is an admissible control to
$(TP)^{M,p}_{y_0}$. This,  along with the definition of $T_p(M,y_0)$
(see (\ref{TP})), yields that
 \begin{equation}\label{NP-4-2}
  T_p(M,y_0)\leq T.
 \end{equation}
Meanwhile, since $(TP)^{M,p}_{y_0}$ has admissible controls, one can
use the standard way  to show that $(TP)^{M,p}_{y_0}$ has  optimal
controls (see for instance, \cite{HOF}, or  the proof of Lemma 3.2
in \cite{PWZ}). Arbitrarily take an optimal control  $u_2$ to
$(TP)^{M,p}_{y_0}$. Clearly,
 \begin{equation}\label{NP-4-3}
  \|u_2\|_{L^p(\mathbb R^+;L^2(\Omega))}\leq M=N_p(T,y_0)\;\;\mbox{and}\;\;y(T_p(M,y_0);y_0,u_2)=0.
 \end{equation}
Let  $\widehat u_2\in L^p(0,T; L^2(\Omega))$ be such that $\widehat
u_2=u_2$ over $[0, T_p(M,y_0))$ and $\widehat
u_2=0$ over $[T_p(M,y_0), T]$. From
(\ref{NP-4-3}) and (\ref{NP-4-2}), one can easily verify that
 $\widehat u_2$ is an optimal control to
$(NP)^{T,p}_{y_0}$. Since $\widehat u_2(t)=0$ for a.e. $ t\in
[T_p(M,y_0),T]$, it follows from the bang-bang property of
$(NP)_{y_0}^{T,p}$ (see  Lemma \ref{lemma-NP-control}) that
$T_p(M,y_0)=T$ and $u_2=\widehat u_2$ over $(0, T)$.
These, along with the bang-bang property of $(NP)_{y_0}^{T,p}$ (see
Definition~\ref{definition1.2}), lead to the bang-bang property of
$(TP)^{M,p}_{y_0}$ (see Definition~\ref{definition 1.1}).

Conversely, we suppose that for some $M>0$,   $(TP)^{M,p}_{y_0}$ has
the bang-bang property.
  It suffices to show
  \begin{equation}\label{NP-4-5}
     M=N_p(T_p(M,y_0),y_0).
  \end{equation}
 From Remark~\ref{remarkwang1.1}, $(TP)^{M,p}_{y_0}$ has an optimal
 control $u_3$, which is clearly an admissible control to
 $(NP)^{T_p(M,y_0),p}_{y_0}$.
Thus, it holds that
  \begin{equation}\label{NP-4-8}
   N_p(T_p(M,y_0),y_0)\leq \|u_3\|_{L^p(0, T_p(M,y_0); L^2(\Omega))} \leq M.
  \end{equation}
Let $u_4$ be the optimal control to $(NP)^{T_p(M,y_0),p}_{y_0}$.
Then
  \begin{equation}\label{NP-4-6}
   y(T_p(M,y_0);y_0,u_4)=0\;\;\mbox{and}\;\;\|u_4\|_{L^p(0,T_p(M,y_0);L^2(\Omega))}=N_p(T_p(M,y_0),y_0).
  \end{equation}
We extend $u_4$ over $\mathbb{R}^+$ by  setting it to be $0$
over  $[T_p(M,y_0), \infty)$, and denote the extension by $\widehat
u_4$. Then, from  (\ref{NP-4-6}) and (\ref{NP-4-8}),
 we see that $\widehat u_4$ is an optimal control to
 $(TP)^{M,p}_{y_0}$. By the bang-bang property of  $(TP)^{M,p}_{y_0}$ (see Definition~\ref{definition 1.1}), we find that
$$
\|\chi_\omega u_4\|_{L^p(0,T_p(M,y_0);
L^2(\Omega))}=\|\chi_\omega \widehat u_4\|_{L^p(0,
T_p(M,y_0); L^2(\Omega))}=M,
$$
which, along with  (\ref{NP-4-6}) and (\ref{NP-4-8}), leads to
(\ref{NP-4-5}). This completes the proof.

\end{proof}

\begin{Lemma}\label{lemma-N-t-lim}
 Let $y_0\in L^2(\Omega)\setminus \{0\}$. $(i)$ The function $N_p(\cdot,y_0)$ is strictly monotonically decreasing and right-continuous over $(0, \infty)$.
 Moreover, it holds that
\begin{equation}\label{N-t-lim-0}
\lim_{T\to 0^+}N_p(T,y_0)=\infty
\end{equation}
and
\begin{equation}\label{N-t-lim-inf}
\lim_{T\to+\infty}N_p(T,y_0) = \widehat N_p(y_0)\in[0,
\infty),
\end{equation}
 where $\widehat N_p(y_0)$ is given by
(\ref{HUANG1.12}); $(ii)$ Suppose that (\ref{Y-Z}) holds. Then the
function $N_p(\cdot,y_0)$ is left-continuous from { $(0, \infty)$
onto $(\widehat N_p(y_0), \infty)$}.
\end{Lemma}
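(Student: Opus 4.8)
The plan is to establish the listed features of $N_p(\cdot,y_0)$ one at a time, working throughout with the conjugate exponent $q$ and relying on the identity $V_q(T,y_0)=-\tfrac12 N_p(T,y_0)^2$ (Lemma~\ref{proposition-V-T-y0}) together with the representation of Lemma~\ref{NP-YT}. First I would prove that $N_p(\cdot,y_0)$ is non-increasing and then that the decrease is strict. For $T_1<T_2$, extending an optimal control of $(NP)^{T_1,p}_{y_0}$ by zero on $[T_1,T_2]$ leaves the state at $0$ there, hence it is admissible for $(NP)^{T_2,p}_{y_0}$ with unchanged norm, giving $N_p(T_2,y_0)\le N_p(T_1,y_0)$. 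If equality held, this zero-extension would be an optimal control of $(NP)^{T_2,p}_{y_0}$ that vanishes on $(T_1,T_2)$, contradicting the bang-bang property and the uniqueness of the optimal control of $(NP)^{T_2,p}_{y_0}$ (Lemma~\ref{lemma-NP-control} and Definition~\ref{definition1.2}); thus the decrease is strict.

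Next I would treat the two limits and the right-continuity of part $(i)$. For $T\to 0^+$, fix $z_0$ with $\langle y_0,z_0\rangle\ne0$ and test Lemma~\ref{NP-YT} against $z_0$: the numerator $\langle y(T;y_0,0),z_0\rangle\to\langle y_0,z_0\rangle\ne0$, while the energy bound $\|\varphi(t;T,z_0)\|\le e^{\|a\|_\infty T}\|z_0\|$ forces $\|\chi_\omega\varphi(\cdot;T,z_0)\|_{L^q(0,T;L^2(\Omega))}\le T^{1/q}e^{\|a\|_\infty T}\|z_0\|\to0$, so $N_p(T,y_0)\to\infty$. For $T\to\infty$, a non-increasing function bounded below by $0$ has a finite limit, namely $\widehat N_p(y_0)$ of (\ref{HUANG1.12}). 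For lower semicontinuity, for each $z\ne0$ put $R_z(T):=\langle y(T;y_0,0),z\rangle/\|\chi_\omega\varphi(\cdot;T,z)\|_{L^q(0,T;L^2(\Omega))}$; its denominator is positive by unique continuation and depends continuously on $T$ by well-posedness and continuous dependence. Since $R_z(T)\le N_p(T,y_0)$, sending $T\to T_0$ and taking the supremum over $z$ gives $N_p(T_0,y_0)\le\liminf_{T\to T_0}N_p(T,y_0)$, which combined with monotonicity (giving $N_p(T,y_0)\le N_p(T_0,y_0)$ for $T>T_0$) proves right-continuity and finishes part $(i)$.

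It remains to prove left-continuity, where (\ref{Y-Z}) enters, and the surjectivity claim of part $(ii)$. Fix $T_0$ and $T_n\uparrow T_0$; monotonicity gives $N_p(T_n,y_0)\downarrow\ell\ge N_p(T_0,y_0)$, and I must show $\ell\le N_p(T_0,y_0)$. Let $\chi_\omega\widehat\varphi_n$ minimize $J^{T_n,q}_{y_0}$ (Lemma~\ref{wanglemma4.2} when $q\in(1,\infty)$, Lemma~\ref{existence-J} when $q=1$), so $\|\chi_\omega\widehat\varphi_n\|_{L^q(0,T_n;L^2(\Omega))}=N_p(T_n,y_0)$ stays bounded. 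The observability estimate (\ref{varphi-t-z-C1}) bounds $\|\widehat\varphi_n(t)\|$ uniformly on each $[0,T_0-\delta]$, so a diagonal extraction yields $\widehat\varphi_n\to\widehat\varphi$ in $C([0,T_0-\delta];L^2(\Omega))$ for every $\delta>0$, with $\widehat\varphi\in C([0,T_0);L^2(\Omega))$ solving (\ref{adjoint-equation}) and, by Fatou, $\|\chi_\omega\widehat\varphi\|_{L^q(0,T_0;L^2(\Omega))}\le\liminf_n N_p(T_n,y_0)$. Hence $\chi_\omega\widehat\varphi\in Z_{T_0,q}$ (see (\ref{zT1})), and here I invoke (\ref{Y-Z}) to conclude $\chi_\omega\widehat\varphi\in Y_{T_0,q}$, so that it is an admissible competitor for $V_q(T_0,y_0)$. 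Since $\widehat\varphi_n(0)\to\widehat\varphi(0)$, lower semicontinuity of the two terms of $J^{T_0,q}_{y_0}$ gives $V_q(T_0,y_0)\le J^{T_0,q}_{y_0}(\chi_\omega\widehat\varphi)\le\liminf_n V_q(T_n,y_0)$, i.e. $N_p(T_0,y_0)\ge\ell$, which is left-continuity. Finally, the now-continuous, strictly decreasing $N_p(\cdot,y_0)$ maps $(0,\infty)$ onto $(\widehat N_p(y_0),\infty)$ by the intermediate value theorem together with the two boundary limits.

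I expect the genuine obstacle to be precisely the membership $\chi_\omega\widehat\varphi\in Y_{T_0,q}$ in the left-continuity step: the minimizers $\widehat\varphi_n$ are defined only on the strictly smaller intervals $(0,T_n)$, so their limit $\widehat\varphi$ is a new adjoint trajectory on $(0,T_0)$ that a priori lies only in $Z_{T_0,q}$, and it is exactly the hypothesis (\ref{Y-Z}) that promotes it to $Y_{T_0,q}$ and thereby supplies the upper semicontinuity behind left-continuity. By contrast, the remaining ingredients — monotonicity, the lower-semicontinuity estimate from the representation formula, and the intermediate value theorem — are comparatively routine, and I would present them with only the estimates sketched above.
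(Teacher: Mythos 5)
Your proof is correct, and for the genuinely hard step --- left-continuity under (\ref{Y-Z}) --- it coincides with the paper's argument: minimizers of $J^{T_n,q}_{y_0}$ with $L^q$-norm equal to $N_p(T_n,y_0)$, the observability bound (\ref{varphi-t-z-C1}) plus a diagonal extraction to produce a limit adjoint trajectory on $[0,T_0)$, membership in $Z_{T_0,q}$ promoted to $Y_{T_0,q}$ by (\ref{Y-Z}), and lower semicontinuity of the functional combined with $V_q=-\tfrac12 N_p^2$. You also correctly identify this membership question as the precise point where (\ref{Y-Z}) is needed. Where you diverge is in the easier parts: for right-continuity the paper takes $T_n\searrow\widehat T$, extends the optimal controls of $(NP)^{T_n,p}_{y_0}$ by zero, extracts a weak-$*$ convergent subsequence, and passes to the limit in the state equation to contradict $\widehat M<N_p(\widehat T,y_0)$; you instead observe that Lemma~\ref{NP-YT} exhibits $N_p(\cdot,y_0)$ as a supremum of the functions $R_z$, each continuous in $T$, hence $N_p(\cdot,y_0)$ is lower semicontinuous, which together with monotonicity yields right-continuity. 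Similarly for $T\to0^+$ you test the representation against a fixed $z_0$ rather than running the paper's contradiction argument with $\Phi(T_n,0)y_0$. Your route is more elementary (no compactness of controls needed) at the cost of two small verifications you only gesture at: the continuity of $T\mapsto\|\chi_\omega\varphi(\cdot;T,z)\|_{L^q(0,T;L^2(\omega))}$ (routine from the energy bound, dominated convergence, and the $O(|T-T_0|^{1/q})$ contribution of the shifting domain), and the need to replace $z_0$ by $-z_0$ so that the numerator tends to a \emph{positive} limit before concluding $R_{z_0}(T)\to+\infty$. The paper leaves the final ``onto $(\widehat N_p(y_0),\infty)$'' claim implicit; your appeal to the intermediate value theorem together with the two boundary limits is the right way to close it.
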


\begin{proof}
$(i)$  We start with showing the strictly monotonicity of
$N_p(\cdot,y_0)$. Let $0<T_1<T_2$. Let $u_1$ be the optimal control
to $(NP)_{y_0}^{T_1,p}$. We extend $u_1$ over $(0, T_2)$ by
setting it to be $0$ over $(T_1, T_2)$ and denote the extension by $u_2$. It is clear that
\begin{equation}\label{proof-N-t-lim-u2-t2}
y(T_2; y_0,u_2)=0.
\end{equation}
Hence, $u_2$ is an admissible control to $(NP)_{y_0}^{T_2,p}$.
Therefore, it holds that
\begin{equation}\label{yuanyuan4.9}
N_p(T_1,y_0)=\|u_1\|_{L^p(0,T_1;L^2(\Omega))}=\|u_2\|_{L^p(0,T_2;L^2(\Omega))}\geq
N_p(T_2,y_0).
\end{equation}
\noindent We claim that $N_p(T_1,y_0)>N_p(T_2,y_0)$.
 By
contradiction, we suppose that it did not hold. Then by
(\ref{yuanyuan4.9}), we would have $N_p(T_1,y_0)=N_p(T_2,y_0)$.
Thus,
$$
\|u_2\|_{L^p(0,T_2;L^2(\Omega))}=\|u_1\|_{{L^p(0,T_1;L^2(\Omega))}}
=N_p(T_1,y_0)=N_p(T_2,y_0).
$$
This, together with (\ref{proof-N-t-lim-u2-t2}), shows that $u_2$ is
an optimal control to $(NP)_{y_0}^{T_2,p}$. By the bang-bang
property of $(NP)_{y_0}^{T_2,p}$ (see Lemma \ref{lemma-NP-control}),
we have that $\|\chi_\omega u_2(t)\|\neq 0$ for a.e. $t\in (0, T_2)$
(see Definition~\ref{definition1.2}). This contradicts with the fact
that $u_2=0$ over $(T_1, T_2)$. Hence, $N_p(\cdot, y_0)$ is
strictly monotonically decreasing.

Next, we show the right-continuity of $N_p(\cdot,y_0)$. Arbitrarily
fix a $\widehat T\in {(0,\infty)}$. Let $\{T_n\}\subset (\widehat
T,\widehat T+1)$ be such that $T_n\searrow {\widehat T}$. Then by
the monotonicity of $N_p(\cdot,y_0)$, there is a $\widehat M\in (0,
\infty)$ such that
\begin{equation}\label{proof-theo-mono2-tilde-M}
N_p(T_n,y_0)\nearrow \widehat M.
\end{equation}
It suffices to show
\begin{equation}\label{proof-theo-mono2-tilde-M-N}
\widehat M=N_p(\widehat T,y_0).
\end{equation}
Seeking for a contradiction, we suppose that
(\ref{proof-theo-mono2-tilde-M-N}) did not hold. Then by the
monotonicity of  $N_p(\cdot,y_0)$, we would have
\begin{equation}\label{proof-theo-mono2-tilde-M-con}
\widehat M<N_p(\widehat T,y_0).
\end{equation}
Let $u_n$ be the optimal control to $(NP)_{y_0}^{T_n,p}$. We extend
$u_n$ over $(0, \widehat T+1)$ by setting it to be $0$ over $(T_n,
\widehat T+1)$, and denote the extension by $\hat u_n$. Then one can
easily check that
\begin{equation}\label{proof-theo-mono2-hat-un}
\|\hat u_n\|_{L^p(0,\widehat T+1;L^2(\Omega))}=N_p(T_n,y_0)\leq
\widehat M\;\;\mbox{and}\;\; y(T_n;y_0,\hat u_n)=0.
\end{equation}
Thus, we can extract a subsequence from $\{\hat u_n\}$, still
denoted in the same way, such that for some $\hat u\in
L^p(0,\widehat T+1;L^2(\Omega))$,
\begin{equation}\label{proof-theo-mono2-hat-un-to-hat-u}
\widehat u_n\to \hat u\textrm{ weakly star in }L^p(0,\widehat T+1;L^2(\Omega)).
\end{equation}
This, along with (\ref{proof-theo-mono2-hat-un}) and
(\ref{proof-theo-mono2-tilde-M-con}), yields
\begin{equation}\label{proof-theo-mono2-hat-u-n}
\|\hat u\|_{L^p(0,\widehat T+1;L^2(\Omega))}\leq
\liminf_{n\to\infty}\|u_n\|_{L^p(0,\widehat
T+1;L^2(\Omega))}\leq\widehat M<N_p(\widehat T,y_0).
\end{equation}
Meanwhile, by (\ref{proof-theo-mono2-hat-un-to-hat-u}) and the equations satisfied by $y(\cdot;y_0,\hat u_n)$ and $y(\cdot;y_0,\hat u)$ over
 $(0,\widehat T+1)$, using the standard argument involving the Ascoli-Arzel\`{a} theorem, we can get a subsequence of $\{y(\cdot;y_0,\hat u_n)\}$,
 denoted in the same way, such that
$$
y(\cdot;y_0,\hat u_n)\to y(\cdot;y_0,\hat u)\textrm{ in }C([0,\widehat T+1];L^2(\Omega)).
$$
This, together with (\ref{proof-theo-mono2-hat-u-n}) and the second equality in (\ref{proof-theo-mono2-hat-un}), indicates that
\begin{eqnarray*}
  \|y(\widehat T;y_0,\hat u)\| &\leq & \|y(\widehat T;y_0,\hat u)-y(T_n;y_0,\hat u)\|
  + \|y(T_n;y_0,\hat u)-y(T_n;y_0,\hat u_n)\|\\
  &&+\|y(T_n;y_0,\hat u_n)\|\to 0,
\end{eqnarray*}
i.e.,  $y(\widehat T;y_0,\hat u)=0$. Thus, $\hat u$ is an admissible
control to $(NP)_{y_0}^{\widehat T,p}$, which yields
$$
\|\hat u\|_{L^p(0,\widehat T;L^2(\Omega))}\geq N_p(\widehat T,y_0).
$$
This contradicts with (\ref{proof-theo-mono2-hat-u-n}). Hence, $N_p(\cdot, y_0)$ is right continuous over {$(0,+\infty)$}.

Finally, we show (\ref{N-t-lim-inf}) and (\ref{N-t-lim-0}). Since
$N_p(T,y_0)>0$ for each $T>0$ (notice that $y_0\neq 0$),
(\ref{N-t-lim-inf}) follows from the monotonicity of
$N_p(\cdot,y_0)$ at once. To prove  (\ref{N-t-lim-0}), we suppose,
by contradiction  that it did not hold. Then there would be a
sequence
  $\{T_n\}\subset(0,1)$ such that $T_n\searrow0$ and $N_p(T_n,y_0)\nearrow\widetilde N\in (0, \infty)$. Let $u_n$ be the optimal
  control to $(NP)_{y_0}^{T_n,p}$. Then
\begin{equation}\label{proof-N-t-lim-un-n}
\|u_n\|_{L^p(0,T_n;L^2(\Omega))}=N_p(T_n,y_0)\leq \widetilde N
\quad\textrm{ for all } n\in \mathbb N^+
\end{equation}
and
\begin{equation}\label{proof-N-t-lim-un-tn}
0=y(T_n;y_0,u_n)=\Phi(T_n,0)y_0+\int_0^{T_n}\Phi(T_n,s)\chi_\omega
u_n(s)\,\mathrm{d}s,
\end{equation}
where $\{\Phi(t,s)\; |\; 0\leq s\leq t< \infty\}$ is the evolution
system generated by $\Delta- aI$ (see Chapter 5  in \cite{P}). By
(\ref{proof-N-t-lim-un-n}), we have
$$
\Big\|\int_0^{T_n}\Phi(T_n,s)\chi_\omega u_n(s)\,\mathrm{d}s\|\leq
\sup_{0\leq s\leq t\leq
1}\|\Phi(t,s)\Big\|_{\mathcal{L}(L^2(\Omega))} \widetilde N\cdot
T_n^{1-\frac{1}{p}}\to 0.
$$
This, along with (\ref{proof-N-t-lim-un-tn}), yields
$$
0=\lim_{n\to \infty}\Phi(T_n,0)y_0=y_0\neq 0,
$$
which leads to a contradiction. Hence, (\ref{N-t-lim-0}) holds. This
completes the proof of the part $(i)$.

$(ii)$ Arbitrarily fix a $\widehat T\in{(0, \infty)}$. Let
$\{T_n\}\subset [\hat T /2, \hat T)$ be such that $T_n\nearrow \hat
T$.  By the monotonicity of $N_p(\cdot, y_0)$, it suffices to show
that on a subsequence of $\{T_n\}$, denoted in the same way,
\begin{equation}\label{proof-theo-mono3-N-Tn-y0}
N_p(T_n,y_0)\to N_p(\widehat T,y_0) \mbox{ as } n\rightarrow\infty.
\end{equation}
By Lemmas~\ref{wanglemma4.2}, \ref{lemma-J-T-y0-unique},
\ref{existence-J} and Lemma~\ref{wanglemma4.3}, the functional
$J_{y_0}^{T_n,q}$ has a unique non-zero minimizer
$\chi_\omega\psi_n$ (on $Y_{T_n,q}$), where $\psi_n\in
C([0,T_n);L^2(\Omega))\cap L^q(0,T_n; L^2(\omega))$ solves Equation
(\ref{adjoint-equation}) with $T$ being replaced by $T_n$.  From
(\ref{V-T-y0-N-T-y0}), (\ref{JP-value-q}) and (\ref{JP-value}) (see
Lemmas~\ref{proposition-V-T-y0}, \ref{wanglemma4.2} and
\ref{existence-J} respectively), it holds that
\begin{equation}\label{proof-theo-mono3-N-Tn-y0-eqn}
0<\|\chi_\omega\psi_n\|_{L^q(0,T_n;L^2(\Omega))}=
N_p(T_n,y_0)\;\;\mbox{for all}\;\; n\in\mathbb N.
\end{equation}
Since $T_n<\hat T$, it follows from $(i)$ of Lemma \ref{lemma-N-t-lim} that
$N_p(T_n,y_0)\leq N_p({\widehat T}/{2},y_0)$ for all $n\in \mathbb
N$. This, as well as (\ref{proof-theo-mono3-N-Tn-y0-eqn}), yields
that
\begin{equation}\label{proof-theo-mono3-psin-L1-n}
\|\chi_\omega\psi_n\|_{L^q(0,T_n;L^2(\Omega))}\leq N_p({\widehat
T}/{2},y_0)\textrm{ for all }n\in \mathbb N.
\end{equation}
We extend $\psi_n$ over $[0, \hat T)$ by setting it to be zero over
$[T_n, \hat T)$, and denote the extension by  $\widetilde \psi_n$.
Then by  (\ref{varphi-t-z-C1}), (\ref{C1}) and
(\ref{proof-theo-mono3-psin-L1-n}), one has
\begin{eqnarray}\label{bdd-point}
  \|\widetilde\psi_n(T_2)\|&=&\|\psi_n(T_2)\|\leq C_1(T_n, T_2) \int_{T_2}^{T_n}\|\chi_\omega\psi_n\|\,\mathrm{d}s\nonumber\\
      &\leq& C_1(T_n, T_2)(T_n-T_2)^{1-\frac{1}{q}} \|\chi_\omega\psi_n\|_{L^q(0,T_n;L^2(\Omega))} \nonumber\\
   &\leq& \exp\big[{\widehat C_0(1+{1}/{(T_{3}-T_2)})}\big](\widehat T-T_2)^{1-\frac{1}{q}}N_p({\widehat T}/{2},y_0) \nonumber\\
  &\triangleq& C(T_2,T_3,\hat T) \, N_p({\widehat T}/{2},y_0) ~~\textrm{ for each }n \geq 3.
\end{eqnarray}
By (\ref{bdd-point}) and the properties of heat equations, there are
a subsequence $\{\widetilde\psi_{n_l}\}$ of $\{\widetilde\psi_n\}$
and a  $z_1\in L^2(\Omega)$ such that
\begin{equation*}
 \widetilde\psi_{n_l}(T_1)\rightarrow z_1\;\;\mbox{strongly in}\;\;L^2(\Omega),\;\;\mbox{as}\;\; l\rightarrow \infty
\end{equation*}
and
\begin{equation*}
 \widetilde\psi_{n_l}(\cdot)\rightarrow \varphi(\cdot; z_1, T_1)\;\;\mbox{strongly in}\;\;C([0,T_1];L^2(\Omega)),\;\;\mbox{as}\;\; l\rightarrow \infty,
\end{equation*}
where $\varphi(\cdot; z_1, T_1)$ is the solution of Equation
(\ref{adjoint-equation}) (where $T=T_1$), with $\varphi(T_1)=z_1$.
With respect to $\widetilde \psi_{n_l}(T_3)$, we can have a similar
estimate as (\ref{bdd-point}). Thus, we can take a subsequence
$\{\widetilde\psi_{n_{l_s}}\}$ from $\{\widetilde\psi_{n_l}\}$ and
get a $z_2\in L^2(\Omega)$ such that
\begin{equation*}
 \widetilde\psi_{n_{l_s}}(\cdot)\rightarrow \varphi(\cdot; z_{2}, T_{2})\;\;\mbox{strongly in}\;\;C([0,T_{2}];L^2(\Omega)),\;\;\mbox{as}\;\;s\rightarrow \infty.
\end{equation*}
Continuing this procedure and making use of the diagonal law, we can
get  a sequence $\{z_k\}$ in $L^2(\Omega)$ and  a
 subsequence of
$\{\widetilde\psi_n\}$, still denoted in the same way, such that
\begin{equation}\label{converge-k}
 \widetilde\psi_{n}(\cdot)\rightarrow \varphi(\cdot; z_k, T_k)\;\;\mbox{strongly in}\;\;C([0,T_k];L^2(\Omega))~~\textrm{ for each } k\in\mathbb N.
\end{equation}
This implies that
\begin{equation}\label{converge-well}
 \varphi(t; z_k,T_k)=\varphi(t; z_{k+j}, T_{k+j})\;\;\mbox{for all}\;\; t\in [0, T_k], ~k=1,2,\dots, ~ j=1,2,\dots.
\end{equation}
We  construct a function $\psi$ over $[0,T)$ by setting
\begin{equation}\label{converge-limit}
 \psi(t)=\varphi(t; z_k, T_k),\;\; t\in [0,T_k],~k=1,2,\dots.
\end{equation}
By (\ref{converge-limit}) and (\ref{converge-well}),  $\psi$ is a
well-defined function over $[0,T)$. From (\ref{converge-k}) and
(\ref{converge-limit}), it follows that
\begin{equation}\label{converge-property1}
 \psi\in C([0,\widehat T);L^2(\Omega))\; \mbox{ solves  Equation
 (\ref{adjoint-equation}), where}\;\; T=\hat T;
\end{equation}
\begin{equation}\label{yaunyuan4.28}
\psi_n(0)\rightarrow\psi (0)\;\;\mbox{strongly in}\;\; L^2(\Omega)
\end{equation}
and
\begin{equation}\label{yuanyuan4.29}
 \chi_\omega\widetilde\psi_{n}\rightarrow \chi_\omega\psi\;\;\mbox{strongly in}\;\;L^q(0,T_k;L^2(\Omega))~~\textrm{ for each } k.
\end{equation}
From (\ref{yuanyuan4.29}), we have
\begin{equation*}
 \|\chi_\omega \psi\|_{L^q(0,T_k;L^2(\Omega))}= \liminf_{n\rightarrow\infty}  \|\chi_\omega \widetilde\psi_n\|_{L^q(0,T_k;L^2(\Omega))}
  \leq \liminf_{n\rightarrow\infty}  \|\chi_\omega \psi_n\|_{L^q(0,T_n;L^2(\Omega))},~\forall\, k\in\mathbb
  N^+,
\end{equation*}
which, along with (\ref{proof-theo-mono3-psin-L1-n}), yields that
\begin{equation}\label{converge-property2}
 \|\chi_\omega \psi\|_{L^q(0,\widehat T;L^2(\Omega))} \leq \liminf_{n\rightarrow\infty} \|\chi_\omega \psi_n\|_{L^q(0,T_n;L^2(\Omega))} \leq N_p({\widehat T}/{2},y_0).
\end{equation}
From (\ref{zT1}), (\ref{converge-property1}),
(\ref{converge-property2}) and (\ref{Y-Z}), we see that
\begin{equation}\label{yuanyuan3.31}
\chi_\omega\psi\in Z_{\widehat T,q}=Y_{\widehat T,q}.
\end{equation}
By (\ref{J-y0}), (\ref{yuanyuan3.31}),  (\ref{converge-property2})
and  (\ref{yaunyuan4.28}), one can easily verify that
\begin{equation*}\label{J-y0-limit}
 J^{\widehat T,q}_{y_0} (\chi_\omega\psi) \leq \liminf_{n\rightarrow\infty} J^{T_n,q}_{y_0}(\chi_\omega\psi_n)=
 \liminf_{n\rightarrow\infty} V_q(T_n,y_0).
\end{equation*}
This, along with (\ref{V-T-y0-N-T-y0}) (see
Lemma~\ref{proposition-V-T-y0}), indicates that
\begin{equation}\label{J-y0-limit-1}
 J^{\widehat T,q}_{y_0} (\chi_\omega\psi) \leq \liminf_{n\rightarrow\infty} -\frac{1}{2} N_p(T_n,y_0)^2.
\end{equation}
By (\ref{V-T-y0-N-T-y0}), (\ref{yuanyuan3.31}), (\ref{J-y0}) and
(\ref{J-y0-limit-1}), we see that
\begin{equation*}
 -\frac{1}{2} N_p(\widehat T,y_0)^2=V_q(\widehat T,y_0)\leq J^{\widehat T,q}_{y_0} (\chi_\omega\psi) \leq \liminf_{n\rightarrow\infty}
  -\frac{1}{2} N_p(T_n,y_0)^2,
\end{equation*}
from which, it follows that
\begin{equation}\label{norm-left}
 \limsup_{n\rightarrow\infty} N_p(T_n,y_0) \leq N_p(\widehat T,y_0).
\end{equation}
On the other hand, since $N_p(\cdot,y_0)$ is decreasing and $T_n<
\widehat T$ for all $n$, it holds that
\begin{equation}\label{yuanyuan4.34}
 \liminf_{n\rightarrow\infty} N_p(T_n,y_0) \geq N_p(\widehat T,y_0).
\end{equation}
Now, (\ref{proof-theo-mono3-N-Tn-y0}) follows from (\ref{norm-left})
and (\ref{yuanyuan4.34}) at once. This completes the proof.

\end{proof}

With the aid of the part $(i)$ of Lemma~\ref{lemma-N-t-lim}, we can
prove the following existence result on optimal controls to Problem
$(TP)^{M,p}_{y_0}$.

\begin{Proposition}\label{proposition-TP-M-y0}
Let $y_0\in L^2(\Omega)\setminus\{0\}$. Then problem
$(TP)_{y_0}^{M,p}$, with $M>0$, has optimal controls iff $M\in
(\widehat N_p(y_0),\infty)$ where $\widehat N_p(y_0)$ is given by
(\ref{HUANG1.12}).
\end{Proposition}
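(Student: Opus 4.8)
The plan is to reduce the statement to two clean equivalences: first, that $(TP)^{M,p}_{y_0}$ admits an optimal control if and only if it admits an admissible control; and second, that admissible controls exist if and only if $N_p(T,y_0)\le M$ for some $T>0$, which by the monotonicity and limiting behaviour in part $(i)$ of Lemma~\ref{lemma-N-t-lim} is exactly $M>\widehat N_p(y_0)$. The bridge is the elementary observation that a control $v\in\mathcal U^{M,p}$ with $y(T;y_0,v)=0$ for some $T>0$ exists if and only if $N_p(T,y_0)\le M$ for some $T>0$: restricting such a $v$ to $(0,T)$ gives $N_p(T,y_0)\le\|v\|_{L^p(\mathbb R^+;L^2(\Omega))}\le M$, while conversely the zero-extension of the optimal control of $(NP)^{T,p}_{y_0}$ (which exists by Lemma~\ref{lemma-NP-control}) lies in $\mathcal U^{M,p}$ and steers $y_0$ to $0$ at time $T$. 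Throughout, only part $(i)$ of Lemma~\ref{lemma-N-t-lim} is used, so the assumption (\ref{Y-Z}) is not invoked.

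For necessity ($\Rightarrow$) I would start from an optimal control $u^*$ of $(TP)^{M,p}_{y_0}$. By definition $u^*\in\mathcal U^{M,p}$ and $y(T_p(M,y_0);y_0,u^*)=0$, with $T_p(M,y_0)<\infty$ (the infimum in (\ref{TP}) being attained) and $T_p(M,y_0)>0$ since $y_0\neq0$. Taking $T=T_p(M,y_0)$ in the observation above yields $N_p(T,y_0)\le M$. The point requiring care is strictness: since $N_p(\cdot,y_0)$ is strictly decreasing and $N_p(S,y_0)\searrow\widehat N_p(y_0)$ as $S\to\infty$ by (\ref{N-t-lim-inf}), one has $\widehat N_p(y_0)\le N_p(T',y_0)<N_p(T,y_0)$ for any $T'>T$, hence $\widehat N_p(y_0)<N_p(T,y_0)\le M$, i.e. $M>\widehat N_p(y_0)$.

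For sufficiency ($\Leftarrow$), suppose $M>\widehat N_p(y_0)$. By (\ref{N-t-lim-inf}) there is a $T_0>0$ with $N_p(T_0,y_0)<M$, so the zero-extension of the optimal control of $(NP)^{T_0,p}_{y_0}$ is admissible for $(TP)^{M,p}_{y_0}$; thus $T_p(M,y_0)\le T_0<\infty$, while (\ref{N-t-lim-0}) forces $T_p(M,y_0)>0$, because for small $t$ one has $N_p(t,y_0)>M$, which rules out steering $y_0$ to $0$ by time $t$ with a control in $\mathcal U^{M,p}$. It then remains to upgrade admissibility to optimality: taking $t_n\searrow T_p(M,y_0)$ and $v_n\in\mathcal U^{M,p}$ with $y(t_n;y_0,v_n)=0$, I would extract a weak-$\ast$ limit $v^*$ of $\{v_n\}$ in $L^p(\mathbb R^+;L^2(\Omega))$, use $\|v^*\|_{L^p(\mathbb R^+;L^2(\Omega))}\le\liminf_n\|v_n\|_{L^p(\mathbb R^+;L^2(\Omega))}\le M$ to get $v^*\in\mathcal U^{M,p}$, and pass to the limit in the state equation (by continuity of the control-to-state map and an Ascoli--Arzel\`{a} argument of the type already used in Lemma~\ref{lemma-N-t-lim}) to obtain $y(T_p(M,y_0);y_0,v^*)=0$, so $v^*$ is optimal. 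This compactness-and-lower-semicontinuity step is the main technical obstacle, but it is entirely standard (see \cite{HOF} or the proof of Lemma~3.2 in \cite{PWZ}); the only genuinely delicate bookkeeping is preserving the strict inequality $M>\widehat N_p(y_0)$ rather than $M\ge\widehat N_p(y_0)$, which is precisely where the strict monotonicity of $N_p(\cdot,y_0)$ enters.
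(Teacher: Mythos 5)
Your proof is correct and takes essentially the same route as the paper's: sufficiency via (\ref{N-t-lim-inf}) and monotonicity to produce a $T_0$ with $N_p(T_0,y_0)<M$, hence an admissible control from $(NP)^{T_0,p}_{y_0}$, followed by the standard minimizing-sequence/weak-$\ast$ compactness argument (which the paper delegates to the proof of Lemma 3.2 in \cite{PWZ} and you spell out); and necessity by observing that an optimal control is admissible for $(NP)^{T_p(M,y_0),p}_{y_0}$, so $N_p(T_p(M,y_0),y_0)\le M$, with strictness of $M>\widehat N_p(y_0)$ coming from the strict monotonicity of $N_p(\cdot,y_0)$, exactly as in the paper (stated there in contrapositive form). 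No gaps.
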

\begin{proof}
First we suppose that $M\in(\widehat N_p(y_0), \infty)$. Then by
(\ref{N-t-lim-inf}) and the monotonicity of $N_p(\cdot,y_0)$ (see
the part $(i)$ of  Lemma \ref{lemma-N-t-lim}), there is a $T_1\in(0,
\infty)$ such that $N_p(T_1,y_0)<M$. Let $u_1$ be the optimal
control to $(NP)_{y_0}^{T_1,p}$. (The existence of optimal controls
is ensured by  Lemma \ref{lemma-NP-control}). Then we have
\begin{equation*}
\|u_1\|_{L^p(0,T_1;L^2(\Omega))}=N_p(T_1,y_0)<M\;\;\mbox{and}\;\;y(T_1;y_0,u_1)=0.
\end{equation*}
From these , $u_1$ is an admissible control to $(TP)_{y_0}^{M,p}$.
By the standard arguments (see, for instance, the proof of Lemma 3.2
in \cite{PWZ}), we can get the existence of optimal controls to
$(TP)_{y_0}^{M,p}$.

Conversely, we assume that $M\leq \widehat N_p(y_0)$. Seeking for a
contradiction, we suppose that $(TP)_{y_0}^{M,p}$ did have an
optimal control $\bar u$ in this case. Then we would have that
\begin{equation}\label{proof-propo-TP-bar-u}
\|\bar u\|_{L^p(0,T_p(M,y_0);L^2(\Omega))}\leq M
\end{equation}
and
\begin{equation}\label{proof-propo-TP-bar-u-y-0}
y(T_p(M,y_0);y_0,\bar u)=0.
\end{equation}
By (\ref{proof-propo-TP-bar-u-y-0}), $\bar u$ is an admissible
control to $(NP)_{y_0}^{T_p(M,y_0),p}$. Then by
(\ref{proof-propo-TP-bar-u}) and the optimality of
$N_p(T_p(M,y_0),y_0)$, it holds that $N_p(T_p(M,y_0),y_0)\leq M$.
This, along with the strict monotonicity of $N_p(\cdot, y_0)$ (see
Lemma \ref{lemma-N-t-lim}), yields that $ M\geq
N_p(T_p(M,y_0),y_0)>\widehat N_p(y_0)$, which leads to a
contradiction. This completes the proof.
\end{proof}

Now we prove Theorem~\ref{theorem-bang-bang}.

\begin{proof}[Proof of Theorem~\ref{theorem-bang-bang}]
When $M\leq \widehat N_p(y_0)$, it follows from
Proposition~\ref{proposition-TP-M-y0} that $(TP)_{y_0}^{M,p}$ has no
any optimal control. Hence,  it has no bang-bang property (see
Remark~\ref{remarkwang1.1}). Conversely, if  $ M>\widehat
N_p(y_0)$, then by  Lemma \ref{lemma-N-t-lim},
 there is a unique $\widetilde T\in(0, \infty)$ such that $ M=N_p(\widetilde T,y_0)$. According to  Lemma \ref{lemma-bang-bang},
  $(TP)_{y_0}^{ M,p}$ has the bang-bang
 property. This completes the proof of
 Theorem~\ref{theorem-bang-bang}.

\end{proof}

Finally, we will show that the condition (\ref{Y-Z}) holds for some
cases.

\begin{Proposition}\label{lemma-YT-ZT}
Suppose that  $a\in L^\infty(\Omega\times \mathbb{R}^+)$ verifies
$a(x,t)=a_1(x)+a_2(t)$ in $\Omega\times\mathbb R^+$, with $a_1\in
L^\infty(\Omega)$ and $a_2\in L^\infty(\mathbb R^+)$. Then
$Y_{T,q}=Z_{T,q}$ for all $T>0$ and $q\in [1, \infty)$.
\end{Proposition}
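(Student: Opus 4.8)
The plan is to prove the two inclusions separately. The inclusion $Y_{T,q}\subseteq Z_{T,q}$ holds for an arbitrary potential $a\in L^\infty(\Omega\times\mathbb R^+)$ and needs no extra hypothesis: by the part $(i)$ of Lemma~\ref{wanglemma4.3}, every $\xi\in Y_{T,q}$ has the form $\chi_\omega\varphi$ with $\varphi\in C([0,T);L^2(\Omega))$ solving Equation~(\ref{adjoint-equation}) and $\chi_\omega\varphi\in L^q(0,T;L^2(\omega))$, which is precisely the membership condition for $Z_{T,q}$ (see (\ref{zT1})). Thus the entire content of the Proposition is the reverse inclusion $Z_{T,q}\subseteq Y_{T,q}$, and it is here that the splitting $a(x,t)=a_1(x)+a_2(t)$ will be used.

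First I would reduce to the autonomous case $a\equiv a_1$ by a scalar gauge transformation. Set $g(t)\triangleq-\int_0^t a_2(s)\,\mathrm{d}s$; since $a_2\in L^\infty(\mathbb R^+)$, the function $e^{g}$ is Lipschitz on $[0,T]$ and satisfies $e^{-\|a_2\|_\infty T}\le e^{g(t)}\le e^{\|a_2\|_\infty T}$. A direct computation shows that $\psi$ solves Equation~(\ref{adjoint-equation}) with potential $a_1+a_2$ if and only if $e^{g}\psi$ solves it with the time-invariant potential $a_1$, since the extra term produced by differentiating $e^{g}$ is exactly $e^{g}(g'+a_2)\psi=0$. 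Consequently the multiplication operator $M\colon\xi\mapsto e^{g(\cdot)}\xi(\cdot)$ is a bounded linear isomorphism of $L^q(0,T;L^2(\omega))$ (with bounded inverse $\xi\mapsto e^{-g(\cdot)}\xi(\cdot)$) that commutes with $\chi_\omega$ and intertwines the two adjoint equations. Because $M$ sends terminal-value solutions to terminal-value solutions (if $\varphi(\cdot;T,z)$ is the $(a_1+a_2)$-solution with datum $z$, then $e^{g}\varphi(\cdot;T,z)$ is the $a_1$-solution with datum $e^{g(T)}z$), it carries the generating family $X_{T,q}$ for $a_1+a_2$ bijectively onto the one for $a_1$, and therefore, being an isomorphism, maps the closure $Y_{T,q}$ of the former onto the closure $Y_{T,q}$ of the latter; it likewise maps $Z_{T,q}$ for $a_1+a_2$ onto $Z_{T,q}$ for $a_1$. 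Hence the identity $Y_{T,q}=Z_{T,q}$ for $a_1+a_2$ is equivalent to the same identity for the autonomous potential $a_1$, and it suffices to treat the latter.

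For the autonomous equation I would exploit time-translation invariance. Let $\chi_\omega\psi\in Z_{T,q}$, so $\psi\in C([0,T);L^2(\Omega))$ solves the $a_1$-adjoint equation and $\chi_\omega\psi\in L^q(0,T;L^2(\omega))$. Since the equation is now autonomous, $\psi$ extends backward to a solution on $(-\varepsilon_0,T)$ (the backward continuation is the forward heat flow in reversed time and is smooth near $t\le 0$), and for each small $\delta>0$ the translate $\psi_\delta(t)\triangleq\psi(t-\delta)$ is again a solution on $[0,T]$ whose terminal value $\psi_\delta(T)=\psi(T-\delta)$ lies in $L^2(\Omega)$. By uniqueness of the terminal-value problem, $\psi_\delta=\varphi(\cdot;T,\psi(T-\delta))$ on $[0,T]$, so $\chi_\omega\psi_\delta\in X_{T,q}$. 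Finally, regarding $\chi_\omega\psi$ as an element of $L^q(-\varepsilon_0,T;L^2(\omega))$ (it is bounded near $-\varepsilon_0$ and $L^q$ near $T$ by hypothesis), continuity of translations in $L^q$ yields $\|\chi_\omega\psi_\delta-\chi_\omega\psi\|_{L^q(0,T;L^2(\omega))}\to0$ as $\delta\to0^+$, whence $\chi_\omega\psi\in\overline{X_{T,q}}=Y_{T,q}$.

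The routine points (the gauge computation and the fact that the isomorphism $M$ preserves closures) are standard; the genuine difficulty is concentrated in the autonomous step, and specifically in the behaviour at the terminal time. An element of $Z_{T,q}$ need not possess a terminal value $\psi(T)\in L^2(\Omega)$ — this is exactly why $Z_{T,q}$ could a priori be strictly larger than $Y_{T,q}$ — so the approximation must be assembled from the interior values $\psi(T-\delta)$, and the only delicate estimate is the $L^q$-convergence of the translates on the shrinking collar near $t=T$, where $\|\chi_\omega\psi(t)\|_\omega$ may blow up. I expect this translation-continuity argument, together with the observation that it is autonomy that legitimizes translating a solution, to be the main obstacle; the reduction via $M$ is precisely what makes the hypothesis $a=a_1(x)+a_2(t)$, rather than a general $a(x,t)$, indispensable.
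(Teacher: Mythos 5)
Your proposal is correct and follows essentially the same route as the paper: the gauge factor $e^{-\int_0^t a_2}$ (the paper uses $e^{\int_t^T a_2}$, which differs only by a constant) reduces to the autonomous potential $a_1$, and the approximants $\chi_\omega\psi(\cdot-\delta)$ are exactly the paper's $\chi_\omega\varphi_k$ with $\delta=T-T_k$, obtained from the backward extension and time-translation invariance. The paper merely spells out the continuity-of-translations step you invoke, splitting off a collar near $t=T$ where the $L^q$ norm is small and using uniform continuity on the rest.
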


\begin{proof}
It suffices to show that
\begin{equation}\label{YZ-1}
 Z_{T,q} \subset Y_{T,q},\;\;\mbox{when}\;\; T>0\;\;\mbox{and}\;\; q\in [1, \infty).
\end{equation}
Let $T>0$ and $q\in [1,\infty)$ be arbitrarily given. Observe that $
\psi\in C([0,T);L^2(\Omega))\cap L^q(0,T;L^2(\omega))$ solves the
equation:
\begin{eqnarray}\label{dualequation2}
\left\{
   \begin{array}{lll}
   \partial_t \psi(x,t)+\Delta \psi(x,t) -(a_1(x)+a_2(t))\psi(x,t)=0  &\mbox{ in }  &\Omega\times(0,T),\\
   \psi(x,t)=0   &\mbox{ on }   &\partial\Omega\times(0,T)
   \end{array}
\right.
\end{eqnarray}
if and only if $\varphi\in C([0,T);L^2(\Omega))\cap
L^q(0,T;L^2(\omega))$ solves
\begin{eqnarray}\label{dualequation2-1}
\left\{
   \begin{array}{lll}
   \partial_t \varphi(x,t)+\Delta \varphi(x,t) -a_1(x)\varphi(x,t)=0  &\mbox{ in }  &\Omega\times(0,T),\\
   \varphi(x,t)=0   &\mbox{ on }   &\partial\Omega\times(0,T),
   \end{array}
\right.
\end{eqnarray}
where the function $\varphi$ is defined by
\begin{equation}\label{Y-Z-11}
\varphi(x,t)=\exp \Big[{\int_t^T a_2(\tau) \mathrm{d}\tau}\Big]
\psi(x,t),\; (x,t)\in \Omega\times (0,T).
\end{equation}

Given $\chi_\omega\hat \psi\in Z_{T,q}$, let $\hat\varphi$ be given
by (\ref{Y-Z-11}) where $\psi=\hat \psi$. Let $\{T_k\}\subset (0,T)$
be such that $T_k\nearrow T$. Write $\varphi_k$ for the solution of
Equation (\ref{dualequation2-1}) with the initial condition
$\varphi_k(T)=\hat \varphi(T_k)$ (which belongs to $L^2(\Omega)$).
Let $\psi_k$ be given by (\ref{Y-Z-11}) where $\varphi=\varphi_k$.
Then, $\psi_k\in C([0,T];L^2(\Omega))$ solves (\ref{dualequation2}).
We claim that
\begin{equation}\label{Y-Z-13}
 \chi_\omega\psi_k \longrightarrow \chi_\omega\hat \psi \;\;\mbox{strongly  in }\;\;L^q(0,T; L^2(\omega)).
\end{equation}
When (\ref{Y-Z-13}) is proved, we get from Lemma~\ref{wanglemma4.3}
that $\chi_\omega\hat \psi\in Y_{T,q}$, which leads to (\ref{YZ-1}).

The remainder is to show (\ref{Y-Z-13}). Clearly, (\ref{Y-Z-13}) is
equivalent to \begin{equation}\label{Y-Z-16}
 \chi_\omega\varphi_k \longrightarrow \chi_\omega\hat \varphi\;\; \mbox{strongly in }\;\; L^q(0,T;L^2(\omega)).
\end{equation}
Let $\widetilde\varphi$ satisfy
\begin{eqnarray}\label{dualequation1}
\left\{
   \begin{array}{lll}
   \partial_t \widetilde\varphi(x,t)+\Delta \widetilde\varphi(x,t) -a_1(x)\widetilde\varphi(x,t)=0  &\mbox{ in }  &\Omega\times(-T,T),\\
   \widetilde\varphi(x,t)=0   &\mbox{ on }   &\partial\Omega\times(-T,T)
   \end{array}
\right.
\end{eqnarray}
and
 \begin{equation}\label{XT4}
  \widetilde\varphi(x,t)=\hat \varphi(x,t),\; (x,t)\in \Omega\times(0,T).
 \end{equation}
It is clear that
\begin{equation}\label{XT6}
 \widetilde\varphi\in C\big([-T,T);L^2(\Omega)\big)\cap L^q\big(-T,T;L^2(\omega)\big).
\end{equation}
 Because the equations satisfied by $\widetilde\varphi$ and
 $\varphi_k$ are time-invariant, one can easily check that
\begin{equation}\label{varphi-k}
 \varphi_k(t)= \widetilde\varphi(t-(T-T_k)),\; \mbox{ when } t\in(0,T).
\end{equation}
By (\ref{XT6}), we see that given $\varepsilon>0$,  there are two
positive constants $\delta(\varepsilon)$ and
$\eta(\varepsilon)=\eta(\varepsilon,\delta(\varepsilon))$ such that
\begin{equation}\label{XT1}
 \|\chi_\omega\widetilde\varphi\|_{L^q(a,b;L^2(\omega))} \leq \varepsilon,\;\;\mbox{when}\;\;(a,b)\subset(-T,T),  |a-b|\leq
 \delta(\varepsilon)
\end{equation}
and
\begin{equation}\label{XT5}
 \|\widetilde\varphi(a)-\widetilde\varphi(b)\|\leq \varepsilon, \;\;\mbox{when}\;\;(a,b)\subset\big[-T,T-\delta(\varepsilon)\big],  |a-b|\leq \eta(\varepsilon).
\end{equation}
Let $k_0=k_0(\varepsilon)$ verify that
\begin{equation}\label{klarge}
 0<T-T_k \leq \eta(\varepsilon),\;\;\mbox{when}\;\;k\geq k_0.
\end{equation}
From (\ref{XT4}) and (\ref{varphi-k}), it follows that
\begin{eqnarray*}
  & &\|\chi_\omega\varphi_k - \chi_\omega\hat\varphi \|_{L^q(0,T;L^2(\omega))}
  =\|\chi_\omega \big(\widetilde\varphi(\cdot-(T-T_k))-\widetilde\varphi(\cdot)\big)\|_{L^q(0,T;L^2(\omega))} \nonumber\\
    &\leq&  \|\chi_\omega \big(\widetilde\varphi(\cdot-(T-T_k))-\widetilde\varphi(\cdot)\big)\|_{L^q(0,T-\delta(\varepsilon);L^2(\omega))} \nonumber\\
    & & +  \|\chi_\omega \widetilde\varphi(\cdot-(T-T_k))\|_{L^q(T-\delta(\varepsilon),T;L^2(\omega))}
     +  \|\chi_\omega \widetilde\varphi\|_{L^q(T-\delta(\varepsilon),T;L^2(\omega))}.
\end{eqnarray*}
This, along with   (\ref{XT5}), (\ref{klarge}) and (\ref{XT1}),
yields that
\begin{equation*}
 \|\chi_\omega\varphi_k-\chi_\omega\hat\varphi \|_{L^q(0,T;L^2(\omega))} \leq (T-\delta(\varepsilon))^{\frac{1}{q}}
  \, \varepsilon + 2 \varepsilon\leq
  (T^{\frac{1}{q}}+2)\varepsilon,\;\;\mbox{when}\;\; k\geq k_0,
\end{equation*}
which leads to (\ref{Y-Z-16}), as well as (\ref{Y-Z-13}). This
completes the proof.
\end{proof}

\begin{Remark}\label{wangremark4.1}
The idea to show (\ref{Y-Z-16}) in the above proof is borrowed from
\cite{WZ} (see the proof of (3.8) on pages 2955-2957 in \cite{WZ}).
\end{Remark}

By  Theorem \ref{theorem-bang-bang} and
Proposition~\ref{lemma-YT-ZT}, we have the following consequence:

\begin{Corollary}\label{theorem-bang}
Let $y_0\in L^2(\Omega)\backslash\{0\}$. Suppose that  $a\in L^\infty(\Omega\times
\mathbb{R}^+)$ verifies $a(x,t)=a_1(x)+a_2(t)$ in
$\Omega\times\mathbb R^+$, with $a_1\in L^\infty(\Omega)$ and
$a_2\in L^\infty(\mathbb R^+)$. Then $(TP)_{y_0}^{M,p}$ has the
bang-bang property if and only if  $M\in (\widehat N_p(y_0),
\infty)$, where $\widehat N_p(y_0)$ is given by (\ref{HUANG1.12}).
\end{Corollary}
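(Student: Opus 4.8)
The plan is to recognize that this corollary is a direct specialization of Theorem~\ref{theorem-bang-bang}: the only non-trivial hypothesis of that theorem is the space identity (\ref{Y-Z}), and the separable structure $a(x,t)=a_1(x)+a_2(t)$ imposed here is precisely the setting in which (\ref{Y-Z}) has already been established in Proposition~\ref{lemma-YT-ZT}. Thus no new analysis is required at the level of the corollary; the work consists of chaining together the two previously proved results in the correct order and checking that their exponents match.

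First I would fix $q$ to be the conjugate exponent of $p$. Since the statement ranges over $p\in(1,\infty]$, the exponent $q$ lies in $[1,\infty)$, which is exactly the range covered by Proposition~\ref{lemma-YT-ZT}; in particular the endpoint $p=\infty$, $q=1$ is admissible, this being the delicate case whose treatment relies on the existence result of Lemma~\ref{existence-J}. Next I would invoke Proposition~\ref{lemma-YT-ZT} with the given potential $a(x,t)=a_1(x)+a_2(t)$, $a_1\in L^\infty(\Omega)$, $a_2\in L^\infty(\mathbb R^+)$. This yields $Y_{T,q}=Z_{T,q}$ for every $T>0$, which is verbatim the assumption (\ref{Y-Z}) required by Theorem~\ref{theorem-bang-bang}. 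With (\ref{Y-Z}) in hand, I would then apply Theorem~\ref{theorem-bang-bang} directly: it asserts that $(TP)^{M,p}_{y_0}$ has the bang-bang property if and only if $M>\widehat N_p(y_0)$, i.e.\ $M\in(\widehat N_p(y_0),\infty)$ with $\widehat N_p(y_0)$ given by (\ref{HUANG1.12}), which is exactly the claimed equivalence.

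As for the main obstacle, there is essentially no independent difficulty at the level of the corollary itself, since all of the substance has been pushed upstream into the two cited results. The only point requiring attention is the bookkeeping just described, namely that the value of $q$ is consistent across both statements and that the separability hypothesis on $a$ is used solely to feed Proposition~\ref{lemma-YT-ZT}. The genuinely hard work lies in those prior results: in the time-translation-plus-density argument verifying the inclusion $Z_{T,q}\subset Y_{T,q}$ within Proposition~\ref{lemma-YT-ZT} (the technique borrowed from \cite{WZ}, as noted in Remark~\ref{wangremark4.1}), and in the full proof of Theorem~\ref{theorem-bang-bang}. Given both, the corollary follows in a line or two.
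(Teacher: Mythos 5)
Your proposal is correct and matches the paper exactly: the paper derives this corollary by combining Proposition~\ref{lemma-YT-ZT} (which establishes (\ref{Y-Z}) for separable potentials) with Theorem~\ref{theorem-bang-bang}, precisely as you describe. Your additional check that $q\in[1,\infty)$ lies in the range covered by Proposition~\ref{lemma-YT-ZT} is a sensible piece of bookkeeping the paper leaves implicit.
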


\section{Appendix}

\begin{proof}[The proof of (\ref{beta-t-T-C1}).] By the observability estimate for heat equations (see \cite[Proposition 3.2]{FZ}) and by (\ref{beta-t-T}), we have
\begin{equation}\label{Appendix-beta}
\beta(t,T)\leq
\mathrm{exp}\Big[C_0\Big(1+\frac{1}{T-t}+(T-t)+\big((T-t)^{\frac{1}{2}}+(T-t)\big)\|a\|_\infty+\|a\|_\infty^{\frac{2}{3}}\Big)\Big],
\end{equation}
where $C_0=C_0(\Omega,\omega)>0$ depends only on $\Omega$ and
$\omega$. Let  $y_0\in L^2(\Omega)\backslash\{0\}$. Define
$$
N_{\infty}(T,t,y_0)\triangleq
\inf\Big\{\|u\|_{L^\infty(t,T;L^2(\Omega))}\,\big|\,\Phi(T,t)y_0+\int_t^T\Phi(T,s)\chi_\omega
u(s)\,\mathrm{d}s=0\Big\},\; T>0, t\in [0,T).
$$
Here $\{\Phi(t,s)\; |\; 0\leq s\leq t<+\infty\}$ is the evolution
system generated by $\Delta- aI$ (see Chapter 5  in \cite{P}). By
the same way to prove Lemma~\ref{NP-YT}, we can obtain
\begin{equation}\label{Appendix-N}
   N_\infty(T,t,y_0) = \sup_{z\in L^2(\Omega)\backslash\{0\}} \frac{\langle \Phi(T,t)y_0,z \rangle
  }{\|\chi_w\Phi(T,\cdot)^*z\|_{L^1(t,T;L^2(\Omega))}}.
\end{equation}
From (\ref{beta-t-T}) and (\ref{Appendix-N}), it follows that
\begin{eqnarray}\label{Appendix-beta2}
  &&\nonumber\beta(t,T)= \sup_{z\in L^2(\Omega)\backslash\{0\}}\frac{\|\varphi(t;T,z)\| }{\|\chi_\omega\varphi(\cdot;T,z)\|_{L^1(t,T; L^2(\Omega))}}\\
   \nonumber&=& \sup_{z\in L^2(\Omega)\backslash\{0\}}\sup_{y_0\in L^2(\Omega)\backslash\{0\}}\frac{\langle\varphi(t;T,z),y_0\rangle }
   {\|\chi_\omega\varphi(\cdot;T,z)\|_{L^1(t,T; L^2(\Omega))}\cdot\|y_0\|} \\
  \nonumber&=&\sup_{y_0\in L^2(\Omega)\backslash\{0\}}\sup_{z\in L^2(\Omega)\backslash\{0\}}\frac{\langle\varphi(t;T,z),y_0\rangle }
  {\|\chi_\omega\varphi(\cdot;T,z)\|_{L^1(t, T; L^2(\Omega))}\cdot\|y_0\|} \\
   &=& \sup_{y_0\in L^2(\Omega)\backslash\{0\}}\frac{N_\infty(T,t,y_0)}{\|y_0\|}.
\end{eqnarray}
By the same way to show the monotonicity of $N_p(\cdot, y_0)$ (see
the proof of the part $(i)$ of Lemma \ref{lemma-N-t-lim}), we can
verify that for each $t\geq 0$ and $y_0\in
L^2(\Omega)\backslash\{0\}$, $N_\infty(\cdot,t,y_0)$ is
monotonically decreasing over $(t, \infty)$. This, along with
(\ref{Appendix-beta2}), yields that when $t\geq0$, $\beta(t,\cdot)$
is monotonically decreasing on $(t, \infty)$. When $(T-t)<1$,
(\ref{beta-t-T-C1}) follows from (\ref{Appendix-beta}) directly.
When $(T-t)\geq1$, we have $T\geq t+1$. By the monotonicity of
$\beta(t,\cdot)$, we have $\beta(t,T)\leq \beta(t,t+1)$. This, along
with (\ref{Appendix-beta}), yields $\beta(t,T)\leq
\mathrm{exp}(\widehat C_0)$, where $\widehat C_0$ depends only on
$\Omega$, $\omega$ and $\|a\|_\infty$. Hence, (\ref{beta-t-T-C1})
holds. This completes the proof.
\end{proof}

 \end{document}